\setlist[enumerate]{leftmargin=.5in}
\setlist[itemize]{leftmargin=.5in}
\theoremstyle{plain}
\newtheorem{lemma}{Lemma}
\newtheorem{proposition}{Proposition}
\newtheorem{theorem}{Theorem}
\newtheorem{corollary}{Corollary}
\newtheorem{conjecture}{Conjecture}
\theoremstyle{remark}
\newtheorem{remark}{Remark}
\newtheorem{example}{Example}
\newtheorem{definition}{Definition}
\def\diag{\operatorname{diag}}
\def\V#1{{\mathbf #1}}
\def\O{\operatorname{O}}
\def\P{\mathbb P}
\def\E{\mathbb E}
\def\U{\operatorname{U}}
\def\SO{\operatorname{SO}}
\def\SU{\operatorname{SU}}
\def\B{\operatorname{B}}
\def\GL{\operatorname{GL}}
\def\Haar{\operatorname{Haar}}
\def\Bernoulli{\operatorname{Bernoulli}}
\def\Uniform{\operatorname{Uniform}}
\def\t{\theta}
\begin{document}

\begin{frontmatter}
\title{Distribution of the number of pivots needed using Gaussian elimination with partial pivoting on random matrices}
\runtitle{Distribution of the number of pivots needed using GEPP}


\begin{aug}
\author[A]{\fnms{John}~\snm{Peca-Medlin}\ead[label=e1]{johnpeca@math.arizona.edu}}
\address[A]{Department of Mathematics, University of Arizona\printead[presep={,\ }]{e1}}

\end{aug}

\begin{abstract}
Gaussian elimination with partial pivoting (GEPP) {is a widely used} method to solve dense linear systems. Each GEPP step uses a row transposition pivot movement if needed to ensure the leading pivot entry is maximal in  magnitude for the leading column of the remaining untriangularized subsystem. We will use theoretical and numerical approaches to study how often this pivot movement is needed. We provide full distributional descriptions for the number of pivot movements needed using GEPP using particular Haar random ensembles, as well as compare these models to other common transformations from randomized numerical linear algebra. Additionally, we introduce new random ensembles with fixed pivot movement counts and fixed sparsity, $\alpha$. Experiments estimating the empirical spectral density (ESD) of these random ensembles leads to a new conjecture on a universality class of random matrices with fixed sparsity whose scaled ESD converges to a measure on the complex unit disk that depends on $\alpha$ and is an interpolation of the uniform measure on the unit disk and the Dirac measure at the origin.
\end{abstract}

\begin{keyword}[class=MSC]
\kwd[Primary ]{60B20}
\kwd{15A23}
\kwd[; secondary ]{65F99}
\end{keyword}

\begin{keyword}
\kwd{Gaussian elimination}
\kwd{partial pivoting}
\kwd{butterfly matrices}
\kwd{Stirling numbers of the first kind}
\kwd{numerical linear algebra}
\kwd{universality}
\end{keyword}

\end{frontmatter}


\section{Introduction and background}
\label{sec:intro}

Gaussian elimination (GE) is {a commonly used} method to solve linear systems 
\begin{equation}
\label{eq:linear}
A\V x = \V b
\end{equation}
for $A \in \mathbb R^{n\times n}$, and remains a staple of introductory linear algebra courses.  If no leading principal minors of $A$ vanish, then GE iteratively transforms \eqref{eq:linear} into two equivalent triangular systems, resulting in the factorization $A = LU$ for $L$ unipotent lower triangular and $U$ upper triangular matrices using {$\frac23n^3(1+ o(1))$} FLOPs.  If $A$ is nonsingular but does have a vanishing principal minor, then GE would need to be combined with a selected pivoting strategy to ensure GE can be continued at each intermediate step without encountering a zero pivot. The row and column movements used to ensure nonzero pivots would then result in additional permutation matrices $P,Q$ for a modified GE factorization $PAQ=LU$. Even when pivoting is not necessary, {pivoting} remains desirable for added computational stability for certain pivoting strategies. This includes GE with partial pivoting (GEPP), the most prominent pivoting strategy for dense linear systems, which is the default strategy used by MATLAB with its built-in \texttt{lu} function. GEPP uses only row permutations and so results in a final $PA=LU$ factorization. (See \Cref{subsec:ge} for relevant description of GE, while \Cref{sec:perms} provides further background for GEPP.) 

With high performance computing, choosing a desired pivoting strategy with GE often becomes a balancing act that takes into account the total computation time (viz., total FLOP count) rather than just accuracy (viz., the numerical stability of computed solutions). The cost of moving large amounts of data can be very expensive on high performance machines. For example, numerical experiments on a hybrid CPU/GPU setup have shown GEPP used with moderately sized random matrices have pivoting account for over 20 percent of the total computation time \cite{baboulin}. Hence, limiting pivot movements using GEPP is desirable to save time. Parker introduced a preconditioning method through the use of random butterfly matrices to remove the need for pivoting overall for any nonsingular linear system \cite{Pa95}. Butterfly matrices are a recursively defined class of orthogonal matrices (see \Cref{subsec:butterfly} for a full definition of random butterfly matrices) for which matrix-vector multiplication $A \V x$ is computed using $3n \log_2 n$ FLOPs rather than the $\mathcal O(n^2)$ FLOPs needed using a general dense matrix. Parker established for $U,V$ independent and identically distributed (iid) random butterfly matrices, then $A \V x = \V b$ can be transformed into the  equivalent system 
\begin{equation}
\label{eq:linear butterfly}
UA\V y = U\V b \quad \mbox{and} \quad \V x = V^* \V y
\end{equation}
for which GE without pivoting (GENP) can be carried out with probability near 1. The above computations show then transforming \eqref{eq:linear} into \eqref{eq:linear butterfly} can be performed using $\mathcal O(n^2 \log_2 n)$ FLOPs, and hence does not impact the leading order complexity of GE. 

In \cite{jpm}, Peca-Medlin and Trogdon further explored the numerical stability of using GE with a variety of pivoting strategies in addition to using randomized preconditioning methods, which included random butterfly  and Haar orthogonal transformations. One output was certain preconditioners had the impact 
that running the preconditioner followed then by GE with another pivoting strategy could ``upgrade'' the pivoting strategy in terms of the numerical accuracy for the computed solution. For instance, even though GENP often leads to accuracy far from that achieved using GEPP or GE with complete pivoting (GECP), a preconditioned matrix using GEPP would lead to accuracy on par with using GECP. Adding one step of iterative refinement would further seal this alignment in accuracy. 

A natural direction that arose out of this previous analysis in \cite{jpm} was to better understand how many actual pivot movements are needed with these different pivoting strategies on the preconditioned linear systems. The goal of this paper is to provide a clearer answer to this question with respect to GEPP. GEPP can use a pivot movement at each GE step, for up to $n-1$ total pivot movements. So applying GEPP to a random linear system will result in the number of pivot movements being a random variable with support in $0,1,\ldots,n-1$.  We will study the question of how much movement one should expect if they choose to use GEPP in conjunction with randomized preconditioning methods\footnote{This is a simpler focus than the more general study of the distribution of the GEPP permutation matrix factor.}.

Our results include both theoretical and numerical approaches focusing on applying GEPP to several random matrix ensembles. The theoretical results rely on input matrices from Haar orthogonal and butterfly ensembles (see \Cref{subsec:prelim} for a review of Haar measures). Our numerical studies  further study these models in relation to other common transformations from randomized numerical linear algebra transformations, which expand studies in \cite{jpm}. 

\subsection{Outline of paper}

This paper is structured to explore the question of how much actual pivot movement is necessary when using GEPP with a variety of randomized preconditioning methods on particular nonsingular linear systems. {The appendices  introduce  necessary notation and background used throughout the document (\Cref{subsec:prelim}); preliminary background on GE, GEPP and growth factors (\Cref{subsec:ge}); definition and background for the Stirling-1 distribution, which includes connections to previous statistical results of distributions using Stirling numbers of the first kind (\Cref{sec:stirling}); and definitions and recent results involving butterfly matrices and butterfly permutations (\Cref{subsec:butterfly}).}

\Cref{sec:results} provides a statement of the main theoretical result, \Cref{thm:main}, that provides the full distribution for the number of GEPP pivot movements needed for particular random ensembles. This includes the distribution of pivot movements when 
\begin{enumerate}[label=\Roman*.]
\item the resulting GEPP permutation matrix factor $P$ is uniformly distributed among the permutation matrices, which uses the Stirling-1 distribution, $\Upsilon_n$, that satisfies
\begin{equation}
\label{eq:stirling pmf}
\P(\Upsilon_n = k) = \frac{|s(n,k)|}{n!}
\end{equation}
for $k=1,2,\ldots,n$ where $s(n,k)$ is the Stirling number of the first kind; and 
\item when GEPP is applied to a Haar-butterfly matrix, which results in a scaled Bernoulli {random variable}. 
\end{enumerate}
The remainder of \Cref{sec:results} provides results and implications of (I) in connection with $QR$ factorizations and other standard Haar unitary models. The proof of \Cref{thm:main} is postponed {until} \Cref{sec:perms,sec:butterfly}.

\Cref{sec:perms} provides the necessary background to establish a proof of part (I) of \Cref{thm:main}. This includes introducing explicit decompositions of permutations in $S_n$, the group of permutations of $n$ objects, that connect explicitly to GEPP permutation matrix factors as well as uniform sampling of $S_n$. \Cref{sec:butterfly} provides a more thorough background for $\mathcal P_N^{(B)}$, the butterfly permutation matrices, and yields a proof for part (II) of \Cref{thm:main}. Additionally, explicit structural configurations of exact pivot movement locations of Haar-butterfly matrices are established that yield a distribution on the pivot location configurations.

\Cref{sec:new} builds on top of \Cref{thm:main} to introduce a new ensemble of random matrices that align with uniformly sampling from $\GL_n(\mathbb R)/\mathcal U_n(\mathbb R)$, the left cosets of the group of nonsingular upper triangular matrices in the general linear group. This ensemble can be {adjusted to} sample from random ensembles with fixed pivot movement distributions. General random ensembles are {then} introduced with fixed sparsity conditions{, and a} new conjecture is provided for the asymptotic empirical spectral distribution for this generalized random ensemble with fixed sparsity that connects to and subsumes the famous circular law in random matrix theory.

\Cref{sec:num} uses numerical experiments to further explore the distribution of pivot movements needed using other transformations from randomized numerical linear algebra. These experiments focus on three initial models, two that need minimal GEPP pivot movements and one that requires the maximal number of GEPP pivot movements. These approaches build on top of the numerical experiments used in \cite{jpm}, as well as connect to other random models used in earlier sections.

\section{Distribution of GEPP pivot movements for particular random matrices}
\label{sec:results}

We first  state the main theoretical result on the distribution of the number of GEPP pivot movements used that applies to particular input random matrix models. These will make use of $\Upsilon_n$, the Stirling-1 distribution (see \Cref{sec:stirling}), $\B_s(N,\Sigma_S)$, the Haar-butterfly matrices, and $\mathcal P_N^{(B)}$, the butterfly permutation matrices (see \Cref{subsec:butterfly}).

\begin{definition}
    {Let $\Pi(A) \in \{0,1,\ldots,n-1\}$ denote the number of pivots needed using GEPP for a $n\times n$ matrix $A$.}
\end{definition}


\begin{theorem}
\label{thm:main}
\normalfont{(I)} If $A$ is a $n\times n$ random matrix with independent columns whose first $n-1$ columns have (absolutely) continuous iid entries, then $P \sim \Uniform(\mathcal P_n)$ for $PA = LU$ the GEPP factorization of $A$ for $n \ge 2$ and {$\Pi(A) \sim n - \Upsilon_n$}.

\normalfont{(II)} If $B \sim \B_s(N,\Sigma_S)$, then $P \sim \Uniform(\mathcal P_N^{(B)})$ for $PB = LU$ the GEPP factorization, and {$\Pi(B) \sim \frac{N}2 \Bernoulli\left(1-\frac1N\right)$}.
\end{theorem}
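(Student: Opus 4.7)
The plan is to handle the two parts of the theorem separately, since they rest on quite different structural mechanisms.

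For part (I), I would first observe that the GEPP permutation factor $P$ of $A$ depends only on the first $n-1$ columns of $A$: at GEPP step $k$ the pivot swap consults column $k$ of the current Schur complement, and every such Schur complement is built from columns $1,\ldots,k$ of $A$, so $P$ is a measurable function of the first $n-1$ columns alone. Under the hypothesis those columns have iid continuous entries, so their joint law is invariant under arbitrary row permutations. A bookkeeping argument then shows that replacing $A$ by the row-permuted matrix $P_\tau A$ has the effect of producing the GEPP permutation $P P_\tau^{-1}$: the same original rows of $A$ are selected by GEPP in the same order, only at relabeled positions. Combined with the distributional invariance of the first $n-1$ columns, this yields $P \stackrel{d}{=} P P_\tau^{-1}$ for every $\tau \in \mathcal P_n$, so the law of $P$ is right-invariant under $\mathcal P_n$, and hence $P \sim \Uniform(\mathcal P_n)$.

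To finish part (I), I would exploit the particular transposition decomposition that GEPP produces. Writing $P = T_{n-1} T_{n-2} \cdots T_1$, where $T_k$ is either the identity or the transposition $(k, j_k)$ with $j_k > k$ recording the pivot swap at GEPP step $k$, this is the unique factorization of $P$ of this form and is essentially the transposition-code encoding of $S_n$. A short inductive argument then shows that the number of non-identity $T_k$ equals $n - c(P)$, where $c(P)$ is the number of cycles of $P$. Therefore $\Pi(A) = n - c(P)$, and since $c(P) \sim \Upsilon_n$ when $P$ is uniform (this is the defining combinatorial identity of the Stirling-1 distribution), we obtain $\Pi(A) \sim n - \Upsilon_n$.

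For part (II), I would lean on the recursive definition of Haar-butterfly matrices recalled in \Cref{subsec:butterfly}. Two things need to be shown: the GEPP permutation $P$ of $B \sim \B_s(N,\Sigma_S)$ almost surely lies in $\mathcal P_N^{(B)}$, and conditional on this, $P$ is uniform on $\mathcal P_N^{(B)}$. For the first, the Kronecker-type block structure built from $2\times 2$ Haar-orthogonal blocks at $\log_2 N$ levels must be shown to propagate through the GEPP Schur complements, so that the only pivot choices available at each level are butterfly swaps (swap an entire block with its sibling, or leave it fixed). For the second, Haar invariance of each $2\times 2$ building block produces an independent fair swap/no-swap decision at each of the $\log_2 N$ recursion levels, giving uniformity over the $|\mathcal P_N^{(B)}| = N$ elements of the butterfly permutation set. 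A direct check then shows every non-identity butterfly permutation is a fixed-point-free involution with exactly $N/2$ disjoint transpositions, so $c(P) \in \{N, N/2\}$, and the part (I) cycle identity specializes to $\Pi(B) = N - c(P) \in \{0, N/2\}$, with the two values attained with probabilities $1/N$ and $1 - 1/N$ respectively. That is exactly the stated scaled Bernoulli distribution.

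The main obstacle I expect is the first half of part (II): proving that the GEPP Schur complements of a Haar-butterfly matrix retain the butterfly block structure through partial row reduction, so that no pivot swap can fall outside $\mathcal P_N^{(B)}$. Part (I)'s uniformity is comparatively soft (row exchangeability), and the cycle-count identity $\Pi = n - c(P)$ is standard combinatorics; it is the exact propagation of the Kronecker-type block structure through GEPP that I expect to require the most care, and is presumably the bulk of \Cref{sec:butterfly}.
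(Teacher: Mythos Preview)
Your proposal is correct on both parts, and your route to part (I) is genuinely different from the paper's. The paper proves $P \sim \Uniform(\mathcal P_n)$ by induction on $n$: it shows the first pivot index $i_1$ is uniform on $[n]$, argues that the Schur complement $A^{(2)}_{2:n,2:n}$ again satisfies the (I) hypothesis, applies the inductive hypothesis to get a uniform permutation on the trailing block, and then invokes the Subgroup algorithm (Theorem~\ref{thm:subgroup}) to assemble a uniform element of $S_n$. Your argument instead exploits a single global symmetry: the first $n-1$ columns have a row-exchangeable law, and (a.s., since ties have probability zero) GEPP applied to $P_\tau A$ returns the permutation factor $P P_\tau^{-1}$---this is exactly the content of the paper's remark following Theorem~\ref{thm:GEPP_unique}---which forces the law of $P$ to be right-invariant and hence uniform. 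Your approach is shorter and sidesteps the verification that the Schur complement inherits the (I) hypothesis, a step that is actually delicate in the paper's inductive argument since the columns of $A^{(2)}_{2:n,2:n}$ all depend on column~1 of $A$ through the multipliers $L^{(1)}_{i1}$. The paper's approach, by contrast, is more constructive and yields Corollary~\ref{cor:uniform Sn} as a byproduct. The cycle-count identity $\Pi(A) = n - c(P)$ is handled the same way in both.

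For part (II), your sketch matches the paper in spirit, but the paper does not track Schur complements: it invokes Proposition~\ref{prop: ge factors} (from \cite{jpm}), which uses the mixed-product property to read off the GEPP factorization of $B(\boldsymbol\theta) = \bigotimes_j B(\theta_{n-j+1})$ directly from the $2\times 2$ factorizations $P_{\theta_j} B(\theta_j) = L_{\theta_j'} U_{\theta_j'}$, giving $P_{\boldsymbol\theta} = \bigotimes_j P_{(1\ 2)}^{e_j}$ with $e_j = \mathds 1(|\tan\theta_j|>1)$ iid $\Bernoulli(\tfrac12)$. This is cleaner than propagating block structure through partial elimination and immediately gives $P \sim \Uniform(\mathcal P_N^{(B)})$. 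Your final step---observing that every non-identity element of $\mathcal P_N^{(B)}$ is a fixed-point-free involution, hence has exactly $N/2$ cycles, and then applying $\Pi = N - c(P)$---is in fact simpler than the paper's route, which proves the finer Proposition~\ref{prop: butterfly perm form} describing the explicit form \eqref{eq:perm_form} of each butterfly permutation before drawing the same conclusion.
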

The proof of \Cref{thm:main} will be postponed until \Cref{sec:perms} for (I) and \Cref{sec:butterfly} for (II).

Part (I) gives the most general result that yields pivot movements following the Stirling-1 distribution. This includes iid random matrices when the entry distribution is continuous.

\begin{corollary}
\label{cor:iid}
If $A$ is a $n\times n$ matrix with (absolutely) continuous iid entries, then {$\Pi(A) \sim n - \Upsilon_n$}.
\end{corollary}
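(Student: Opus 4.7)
The plan is essentially to observe that \Cref{cor:iid} is a direct specialization of part (I) of \Cref{thm:main}. The hypothesis of part (I) requires two structural conditions on $A$: (a) the columns of $A$ are mutually independent, and (b) the first $n-1$ columns each consist of absolutely continuous iid entries. I would verify both conditions from the stronger iid assumption on the entries of $A$.

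First I would note that if every entry $A_{ij}$ is iid from some (absolutely) continuous distribution $\mu$, then disjoint blocks of entries are independent; in particular, the columns $A_{\cdot,1},\ldots,A_{\cdot,n}$ of $A$ are mutually independent random vectors. This gives condition (a). For condition (b), each column $A_{\cdot,j}$ has entries $A_{1j},\ldots,A_{nj}$ which are iid $\mu$-distributed by assumption, and since $\mu$ is absolutely continuous this immediately verifies that the first $n-1$ columns (in fact all $n$ columns) have absolutely continuous iid entries.

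Having verified the hypotheses, I would conclude by applying \Cref{thm:main}(I) directly to obtain $\Pi(A) \sim n - \Upsilon_n$. The only step requiring any care is confirming that the corollary does not actually require the stronger assumption that all $n$ columns be iid with the same distribution (it does not: the iid assumption on entries trivially supplies this, but the theorem only needs iid on the first $n-1$ columns). Since there is no real obstacle here, the proof is essentially a one-line invocation of \Cref{thm:main}(I); the main value of recording this as a corollary is simply to isolate the most familiar random-matrix model (iid continuous entries, e.g.\ Ginibre or real Gaussian ensembles) to which the Stirling-1 pivot-count distribution applies.
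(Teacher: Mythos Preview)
Your proposal is correct and matches the paper's approach exactly: the paper presents \Cref{cor:iid} as an immediate specialization of \Cref{thm:main}(I) without writing out a separate proof, simply noting that iid continuous matrices satisfy the independent-column hypothesis. Your verification that iid entries yield independent columns with continuous iid entries in each column is precisely the (trivial) check needed.
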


If $A$ is an iid matrix with entries taken from a distribution $\xi$ with atoms (i.e., $\P(\xi = c) > 0$ for some $c$), then GEPP would yield a different distribution on {$\Pi(A)$}. 

\begin{example} 
\label{ex:bern}
If $A$ is $2\times 2$ where $A_{ij}$ are each iid $\operatorname{Bernoulli}(p)$, then GEPP yields the permutation matrix factor $P = P_{(1 \ 2)}^\zeta$ where $\zeta \sim \Bernoulli(p(1-p))$. This follows since a pivot movement is needed only if $A_{11} = 0$ and $A_{21} = 1$. A pivot is needed with probability $p(1-p) \le \frac14$, so {$\Pi(A) \sim \zeta$}.
\end{example}

Other continuous random models that do not fit the {independent column condition} in (I) would yield different distributions on the resulting GEPP permutation matrix factors. 

\begin{example}
\label{ex:goe}
Consider $G \sim \operatorname{GOE}(2)$, where $G_{11},G_{22} \sim N(0,2)$ and $G_{21}=G_{12} \sim N(0,1)$ are independent. Then {$\Pi(G) \sim \Bernoulli(p)$} for
\begin{align*}
    p &= \P(|G_{21}| > |G_{11}|) = \P(|Z_1| > \sqrt 2 |Z_2|) 
    = \P(Z_1^2/Z_2^2 > 2) \\
    &= \P(F_{1,1} > 2) =\frac1\pi \int_2^\infty \frac{d x}{\sqrt x(1+x)} = \frac2\pi \arctan\left(\frac1{\sqrt 2}\right) \approx 0.391826552
\end{align*}
using $Z_i \sim N(0,1)$ iid and $F_{\mu,\nu}$ to denote the $F$-distribution with $\mu>0$ numerator degrees of freedom (d.f.) and $\nu>0$ denominator d.f. 
\end{example}

\begin{example}
\label{ex:gue}
Consider now $G \sim \operatorname{GUE}(2)$, where  $G_{11},G_{22} \sim N(0,1)$ while $G_{12}=\overline{G_{21}} \sim N_{\mathbb C}(0,1)$, then {similarly $\Pi(G) \sim \Bernoulli(q)$} for
\begin{align*}
    q &= \P(|G_{21}| > |G_{11}|) = \P\left(\sqrt{(Z_1^2+Z_2^2)/2} > |Z_3|\right)=\P((Z_1^2+Z_2^2)/2 > Z_3^2) \\
    &= \P(F_{2,1} > 1) = \int_1^\infty \frac{dx}{(1+2x)^{3/2}} = \frac1{\sqrt 3} \approx 0.577350269.
\end{align*}
\end{example}

\begin{remark}
In comparison to \Cref{ex:bern,ex:goe,ex:gue}, \Cref{cor:iid} yields if $G$ is a continuous iid $2\times 2$ matrix {(e.g., $G \sim \operatorname{Gin}(2,2)$)}, then {$\Pi(G) \sim 2 - \Upsilon_2 \sim \Bernoulli(\frac12)$}, where we note $\P(\Upsilon_2 = 1) = \frac{|s(2,1)|}{2!} = \frac12 = \frac{|s(2,2)|}{2!} =\P(\Upsilon_2 = 2)$.
\end{remark}

{Further results from \Cref{thm:main} and \Cref{cor:iid} can be gleaned by relating the $LU$ and $QR$ factorizations of a matrix.} If $A$ has the GEPP factorization $PA = LU$, then its $QR$ factorization $A = QR$ yields $PQ = AR^{-1} = L(UR^{-1})$.\footnote{Note $UR^{-1} \in \mathcal U(\mathbb F,n)$ when $U,R \in \mathcal U(\mathbb F,n)$ since $\mathcal U(\mathbb F,n)$ is a group.} In particular, the resulting permutation matrix and hence the pivot movements that would be needed using GEPP on $Q$ and $A$ are identical when no ties are encountered by \Cref{thm:GEPP_unique}{, i.e., $\Pi(Q) = \Pi(A)$}. This observation then can be combined with Stewart's realization of Haar orthogonal and Haar unitary sampling using Ginibre ensembles (cf. \Cref{subsec:prelim}) to yield:
\begin{corollary}
\label{cor:haar o}
If $A \sim \operatorname{Haar}(\O(n))$ or $A \sim \operatorname{Haar}(\U(n))$, then {$\Pi(A) \sim n - \Upsilon_n$}.
\end{corollary}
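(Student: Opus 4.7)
My plan is to realize a Haar-orthogonal (resp. Haar-unitary) sample as the $Q$-factor of a Ginibre matrix and transport the pivot count across the $QR$-$LU$ bridge displayed just above the corollary.

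First I would sample $G \sim \operatorname{Gin}(n,n)$ (real Ginibre in the orthogonal case, complex Ginibre in the unitary case) and invoke Stewart's realization from \Cref{subsec:prelim}: the $Q$-factor in the $QR$ decomposition $G = QR$ (with the standard diagonal sign normalization on $R$) is distributed as $\Haar(\O(n))$ (respectively $\Haar(\U(n))$). It therefore suffices to prove the claim for this particular $Q$. Since $G$ has (absolutely) continuous iid entries, \Cref{cor:iid} immediately yields $\Pi(G) \sim n - \Upsilon_n$, so what remains is the identification $\Pi(Q) = \Pi(G)$ almost surely.

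That identification I would read off from the identity $PQ = PGR^{-1} = L(UR^{-1})$, where $PG = LU$ is the GEPP factorization of $G$. Because $UR^{-1}$ is upper triangular and $L$ is unit lower triangular, this displays an $LU$ factorization of $PQ$; by \Cref{thm:GEPP_unique}, as soon as no ties arise, $P$ must also be the GEPP permutation factor of $Q$, so $\Pi(Q) = \Pi(G)$, and chaining the two distributional identities gives $\Pi(A) \sim n - \Upsilon_n$.

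The step I expect to require the most care is verifying the almost-sure absence of ties for $Q$, which is what licenses the uniqueness step. My approach would be to observe that right multiplication by the upper triangular $R^{-1}$ only rescales the leading column of each successive Schur complement by a nonzero diagonal scalar, so the pivot choices made by GEPP at every stage coincide for $Q$ and $G$; ties for $Q$ then reduce to ties for $G$, and the latter event has probability zero because $G$ admits a joint density on $\mathbb{R}^{n\times n}$ (resp. $\mathbb{C}^{n\times n}$). With this in hand, the two cases $\O(n)$ and $\U(n)$ are treated uniformly and the corollary is immediate.
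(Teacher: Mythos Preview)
Your proposal is correct and follows essentially the same route as the paper: realize the Haar sample via Stewart's $QR$ construction applied to a Ginibre matrix, invoke \Cref{cor:iid} for the Ginibre pivot count, and transfer the result across the identity $PQ = L(UR^{-1})$. The paper handles the ``no ties'' step more tersely by a bare appeal to \Cref{thm:GEPP_unique}, whereas your column-by-column Schur-complement argument (right multiplication by an upper triangular matrix only rescales the leading column at each GE step) is a cleaner and more self-contained justification that the GEPP pivot choices for $Q$ and $G$ coincide step by step; this makes the invocation of \Cref{thm:GEPP_unique} essentially redundant, but nothing is wrong.
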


A similar approach can then yield information about the pivot movements needed on $\Haar(\SO(n))$ and $\Haar(\SU(n))$. Note Stewart's sampling method for $\Haar(\O(n))$ can be indirectly rephrased in terms of the Subgroup algorithm, which was formally established by Diaconis and Shahshahani \cite{DiSh87}. The Subgroup algorithm enables a uniform sampling method for a compact group by using a subgroup and its associated cosets: 
\begin{theorem}[Subgroup algorithm,\cite{DiSh87}]
\label{thm:subgroup}
If $G$ is a compact group, $H$ is a closed subgroup of $G$, and $H/G$ is the set of left-costs of $H$, then if $x \sim \Uniform(G/H)$ and $y \sim \Haar(H)$, then $xy \sim \Haar(G)$. 
\end{theorem}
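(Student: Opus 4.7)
The plan is to appeal to the fact that on a compact group, Haar measure is the unique left-invariant Borel probability measure. So I would verify that the law of $xy$ is left-invariant under multiplication by any $g \in G$, and then invoke uniqueness to conclude $xy \sim \Haar(G)$.

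First, I would make precise what $\Uniform(G/H)$ means: since $G$ is compact, $G$ acts transitively on the homogeneous space $G/H$, and there is a unique $G$-invariant Borel probability measure on $G/H$; this is the measure being denoted $\Uniform(G/H)$. To work with $x$ as an element of $G$ (so that the product $xy$ makes sense), I would fix a measurable section $s\colon G/H \to G$ of the quotient map $\pi\colon G \to G/H$ and interpret $x = s(\bar x)$ where $\bar x \sim \Uniform(G/H)$. The punchline will be that $xy$ does not depend on the choice of section, which itself is a nice sanity check.

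Next, fix $g \in G$. For any $x = s(\bar x)$ in the image of $s$, the element $gx$ lies in the coset $g \bar x \in G/H$, so we can write $gx = s(g\bar x) h(g,\bar x)$ where $h(g,\bar x) := s(g\bar x)^{-1} g x \in H$. Then
\begin{equation*}
g(xy) = (gx)y = s(g\bar x)\,h(g,\bar x)\,y.
\end{equation*}
Conditioning on $\bar x$ and using that $y \sim \Haar(H)$ is independent of $\bar x$, left-invariance of Haar measure on $H$ gives $h(g,\bar x)y \stackrel{d}{=} y$ conditionally on $\bar x$, and the product stays independent of $\bar x$. Hence $g(xy) \stackrel{d}{=} s(g\bar x)\,y'$ with $y' \sim \Haar(H)$ independent of $\bar x$. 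Since $\bar x \mapsto g\bar x$ is a measurable bijection of $G/H$ preserving the $G$-invariant measure $\Uniform(G/H)$, we have $g\bar x \stackrel{d}{=} \bar x$, so $s(g\bar x)\,y' \stackrel{d}{=} s(\bar x)\,y = xy$. Therefore $g(xy) \stackrel{d}{=} xy$ for every $g \in G$, i.e., the law of $xy$ is left-invariant, and uniqueness of Haar measure on the compact group $G$ forces $xy \sim \Haar(G)$.

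The step I expect to be most delicate is the measurability and independence bookkeeping around the section $s$: one has to ensure $h(g,\bar x) \in H$ is a measurable function of $\bar x$, so that conditioning on $\bar x$ and applying left-invariance of $\Haar(H)$ to the independent $y$ is rigorous. Existence of such a measurable section on a compact (metrizable) group is standard but is the one nontrivial ingredient; once it is in hand, the rest of the argument is a routine application of Fubini and the left-invariance of $\Haar(H)$.
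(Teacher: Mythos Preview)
The paper does not give its own proof of this theorem; it is stated as a cited result from \cite{DiSh87} and used as a black box. Your argument is essentially the standard proof: show that the law of $xy$ is left-$G$-invariant by absorbing the $H$-valued cocycle $h(g,\bar x)$ into $y$ via left-invariance of $\Haar(H)$, then absorbing the shift $\bar x \mapsto g\bar x$ via $G$-invariance of the measure on $G/H$, and finally invoking uniqueness of Haar measure on a compact group. This is correct, and you have correctly identified the one genuinely technical point, namely the existence of a Borel section $s\colon G/H \to G$ (which holds when $G$ is compact metrizable, hence second countable, by standard selection theorems). One small remark: your independence bookkeeping is fine, but it is worth stating explicitly that since the conditional law of $y' = h(g,\bar x)y$ given $\bar x$ is $\Haar(H)$ regardless of the value of $\bar x$, the pair $(g\bar x, y')$ has the same joint law as $(\bar x, y)$, which is what makes the final step $s(g\bar x)\,y' \stackrel{d}{=} s(\bar x)\,y$ go through.
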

\Cref{thm:subgroup} can analogously be stated using right cosets. 

Stewart's approach for sampling Haar orthogonal matrices can be realized in light of \Cref{thm:subgroup} by iteratively using Householder reflectors as coset representatives of the subgroup of orthogonal matrices whose first row and column have 1 in the leading diagonal and are zero elsewhere.\footnote{The Householder reflectors then can be uniformly sampled by choosing $\V x \in \mathbb S^{n-1}$ uniformly.} More directly, one can then realize $\Haar(\SO(n))$ by using the coset representatives of $\O(n)/\SO(n)$ of $D(x) = \diag(x,1,\ldots,1)$ for $x = \pm 1$: if $A \sim \Haar(\SO(n))$ and $x \sim \Uniform(\pm1)$, then $D(x) A \sim \Haar(\O(n))$.\footnote{Similarly $\Haar(\U(n))$ and $\Haar(\SU(n))$ can be related using the coset representatives $D(x)$ for $x \in \mathbb T$.} Moreover, group homomorphisms can be used to yield uniform measures on cosets as push forward measures of Haar measures. In particular, since $\SO(n)$ is a normal subgroup of $\O(n)$ (since it is the kernel of the group homomorphism $\det: \O(n) \to \mathbb R$), then the natural quotient map $C_2 = \{\pm 1\} \cong \O(n)/\SO(n)\cong \O(n)\backslash \SO(n)$ yields $C_2 \times \SO(n) \cong \O(n)$. This yields uniform sampling from both $C_2$ and $\SO(n)$ using push forwards of the Haar sampling on $\O(n)$ along with the natural projection maps to each component, which similarly holds using instead $\U(n)$ and $\SU(n)$: 
\begin{corollary}
$x \sim \Uniform(\pm 1)$ and $A \sim \Haar(\SO(n))$ iff $AD(x) \sim \Haar(\O(n))$, while $y \sim \Uniform(\mathbb T)$ and $A \sim \Haar(\SU(n))$ iff $AD(y) \sim \Haar(\U(n))$.
\end{corollary}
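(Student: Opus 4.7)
The plan is to reduce this corollary to two applications of the Subgroup algorithm (\Cref{thm:subgroup}), exploiting the normality of $\SO(n)$ inside $\O(n)$ and of $\SU(n)$ inside $\U(n)$. First I would record the structural setup: since $\SO(n) = \ker(\det|_{\O(n)})$ is normal, left and right cosets coincide and $\O(n)/\SO(n) \cong \{\pm 1\}$ with coset representatives $\{D(x) : x = \pm 1\}$, and the ``Uniform'' distribution on $\{\pm 1\}$ is precisely the Haar/counting measure on the quotient group $C_2$. The map $\phi: \SO(n) \times \{\pm 1\} \to \O(n)$, $(A,x) \mapsto AD(x)$, is then a continuous bijection (indeed a homeomorphism of compact Hausdorff spaces) with inverse $B \mapsto (BD(\det B), \det B)$, since $\det(AD(x)) = x$ uniquely determines $x$ and hence $A = BD(x)$.

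For the forward implication I would invoke the right-coset analogue of \Cref{thm:subgroup} noted in the text (valid on a compact group via inversion, and in our setting normality makes left and right cosets identical anyway). Sampling $x \sim \Uniform(\pm 1)$ selects a uniformly random right-coset representative, and taking $A \sim \Haar(\SO(n))$ independently gives $AD(x) \sim \Haar(\O(n))$ directly.

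For the converse I would exploit bijectivity of $\phi$. The forward direction shows $\phi_*(\Haar(\SO(n)) \otimes \Uniform(\pm 1)) = \Haar(\O(n))$; then for $B \sim \Haar(\O(n))$, the unique pair $\phi^{-1}(B) = (BD(\det B), \det B)$ is distributed as the pullback of $\Haar(\O(n))$ under $\phi^{-1}$, which is $\Haar(\SO(n)) \otimes \Uniform(\pm 1)$, yielding simultaneously the claimed independence and the marginal distributions.

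The unitary statement is structurally identical, with $\SU(n) = \ker(\det|_{\U(n)})$ normal in $\U(n)$, quotient $\U(n)/\SU(n) \cong \mathbb T$, coset representatives $\{D(y) : y \in \mathbb T\}$, and $\Uniform(\mathbb T) = \Haar(\mathbb T)$; one repeats the two-line argument verbatim. The only real obstacle is bookkeeping around left-versus-right cosets and the correct orientation of the Subgroup algorithm, and normality collapses this issue completely; the remainder is just uniqueness of Haar measure on compact groups combined with bijectivity of $\phi$.
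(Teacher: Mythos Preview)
Your proposal is correct and follows essentially the same approach as the paper, which presents the argument informally in the paragraph preceding the corollary rather than as a separate proof: the forward direction via the (right-coset form of the) Subgroup algorithm, and the converse via the bijection $\O(n)\cong C_2\times\SO(n)$ coming from normality of $\SO(n)=\ker\det$, pushing $\Haar(\O(n))$ forward along the natural projections. Your write-up is simply more explicit about the inverse map $B\mapsto(BD(\det B),\det B)$ and the left/right coset bookkeeping, but the underlying ideas coincide.
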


Hence, mimicking the result aligning the GEPP permutation matrix factors between $A$ and the $Q$ from the $A=QR$ factorization, we similarly have identical {GEPP permutation} $P$ factors for $A \sim \Haar(\O(n))$ and $B \sim \Haar(\SO(n))$ where $A = B D(x)$.\footnote{The argument from the paragraph before \Cref{cor:haar o} is identical after replacing $R \in \mathcal U(\mathbb F,n)$ with $D(x) \in \mathcal U(\mathbb F,n)$.}
\begin{corollary}
If $A \sim \operatorname{Haar}(\SO(n))$ or $A \sim \operatorname{Haar}(\SU(n))$, then {$\Pi(A) \sim n - \Upsilon_n$}.
\end{corollary}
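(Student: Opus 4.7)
The plan is to reduce the $\SO(n)$ and $\SU(n)$ cases directly to the $\O(n)$ and $\U(n)$ cases already established in \Cref{cor:haar o}, using the coset-level Haar sampling identity from the preceding corollary. For $A \sim \Haar(\SO(n))$, take $x \sim \Uniform(\{\pm 1\})$ independent of $A$; the preceding corollary then gives $A' := AD(x) \sim \Haar(\O(n))$, and \Cref{cor:haar o} delivers $\Pi(A') \sim n - \Upsilon_n$. It therefore suffices to establish $\Pi(A) = \Pi(A')$ almost surely, after which the $\SU(n)$ claim follows by running the identical argument with $y \sim \Uniform(\mathbb T)$ in place of $x$ together with the unitary half of \Cref{cor:haar o}.

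The identification $\Pi(A) = \Pi(A')$ a.s.\ mirrors the $QR$ transfer argument appearing just before \Cref{cor:haar o}, with $D(x) \in \mathcal U(\mathbb F,n)$ now playing the role of $R$: if $PA = LU$ is the GEPP factorization of $A$, then $PA' = L(UD(x))$ with $UD(x)$ still upper triangular. To see this is indeed the GEPP factorization of $A'$, I would observe that right multiplication by $D(x)$ rescales only the first column of $A$ by a scalar of modulus one, so partial pivoting selects the same row at the first elimination step; moreover, the first-column multipliers cancel this scalar and the remaining $n-1$ columns of $A$ and $A'$ agree exactly, so the $(n-1)\times(n-1)$ Schur complements of the two matrices are literally equal. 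Applying the same reasoning inductively on each subsequent Schur complement yields that every pivot choice coincides, and hence the entire GEPP row-swap history transfers between $A$ and $A'$.

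To close, one invokes \Cref{thm:GEPP_unique}: since $\Haar(\O(n))$ assigns mass zero to the proper subvariety on which two entries of some column share maximum magnitude, the GEPP factorization is almost surely unique, and the Schur-complement computation above then identifies $\Pi(A)$ with $\Pi(A')$ on a full-measure event. The main point I would want to verify most carefully is precisely this Schur complement identity at each step --- once it has been confirmed that commuting GEPP past right multiplication by a diagonal unimodular matrix leaves $P$ invariant, the corollary follows immediately from \Cref{cor:haar o} in both the orthogonal and unitary regimes.
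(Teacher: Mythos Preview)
Your proposal is correct and follows essentially the same route as the paper: reduce to \Cref{cor:haar o} by writing $A' = AD(x) \sim \Haar(\O(n))$ (or the unitary analogue), then transfer the GEPP permutation from $A'$ to $A$ using that $D(x) \in \mathcal U(\mathbb F,n)$ together with \Cref{thm:GEPP_unique}. The paper dispatches the transfer in one line by noting that the $QR$ argument preceding \Cref{cor:haar o} goes through verbatim with $R$ replaced by $D(x)$; your Schur-complement tracking is simply a more explicit unpacking of that same observation.
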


In \cite{Pa95,jpm}, the authors studied particular random preconditioning transformations of the form $UAV^*$ for iid $U,V$ so that the resulting output  can almost surely (a.s.) have a $LU$ factorization. In \cite{jpm}, the authors further studied the numerical properties for GE pivoting strategies, including GEPP, after applying these random preconditioning transformations. Relating this to our current topic, one could ask how many pivot movements are needed using GEPP after these random transformations?

For the Haar orthogonal case (as well as the unitary, special orthogonal and special unitary cases), the result is immediate: Let $A$ be a nonsingular real matrix. Suppose $U,V$ are iid $\Haar(\O(n))$ and let $B = AV^*$, which is independent of $U$. Let $B = QR$ be the QR factorization of $B$. Then $UAV^* = UB = U(QR) = (UQ)R$. The permutation matrix factor resulting from GEPP for $UAV^*$ is then identical (if no ties are encountered, which holds a.s.) to the permutation matrix factor needed for $UQ$. Since $\Haar(\O(n))$ is right invariant, then $UQ \sim U$. This then yields {$\Pi(UAV^*) =\Pi(UQ) \sim \Pi(Q)$, which is  equal in distribution to case (I).}
\begin{corollary}
\label{cor:2sided}
If $A$ is a $n\times n$ nonsingular matrix, and $U,V$ are iid from the Haar measure on $\O(n)$, $\U(n)$, $\SO(n)$ or $\SU(n)$, then {$\Pi(UAV^*) \sim n - \Upsilon_n$}.
\end{corollary}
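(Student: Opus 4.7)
The plan is to reduce the two-sided preconditioning case to the one-sided Haar case already handled in \Cref{cor:haar o}, by absorbing the right-multiplication by $V^*$ into a QR factorization and exploiting right-invariance of the Haar measure. This essentially reproduces the informal argument in the paragraph preceding \Cref{cor:2sided}, now made precise and uniform across the four compact groups.

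First I would condition on $V$, so that $B := AV^*$ is a fixed nonsingular matrix independent of $U$. Compute a QR factorization $B = QR$, and write
\[
UAV^* \;=\; UB \;=\; (UQ)R,
\]
where $R$ is upper triangular and nonsingular. Because multiplying on the right by an invertible upper triangular matrix does not affect the GEPP permutation factor (this is the same observation used in the derivation of \Cref{cor:haar o}, together with the almost-sure uniqueness from \Cref{thm:GEPP_unique}), one obtains $\Pi(UAV^*) = \Pi(UQ)$ almost surely.

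Next I would apply right-invariance of the Haar measure. In the $\O(n)$ and $\U(n)$ cases, $Q$ lies in the same compact group as $U$, so $UQ \sim \Haar$ on that group conditionally on $V$ (hence unconditionally), and \Cref{cor:haar o} gives $\Pi(UQ) \sim n - \Upsilon_n$. In the $\SO(n)$ and $\SU(n)$ cases, the standard QR of $B$ may produce $Q$ with determinant $-1$ (resp. $\det Q \in \mathbb T \setminus \{1\}$); the remedy is to write $Q = Q_0 D$ where $D = \diag(\det Q, 1, \ldots, 1)$ and $Q_0$ lies in the appropriate special subgroup, so that $UAV^* = (UQ_0)(DR)$ with $DR$ still upper triangular. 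Then $UQ_0 \sim \Haar(\SO(n))$ (resp. $\Haar(\SU(n))$) by right-invariance, and the corresponding special-subgroup corollary applies.

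The main obstacle is essentially bookkeeping: verifying the almost-sure no-ties hypothesis needed to invoke \Cref{thm:GEPP_unique}, which follows from the absolute continuity of the Haar distributions on these groups, and handling the determinant correction in the $\SO$/$\SU$ cases as above. Beyond these technicalities, the statement is a direct composition of \Cref{cor:haar o} (and its $\SO/\SU$ analogue) with right-invariance of the Haar measure, so no new estimates or distributional computations are required.
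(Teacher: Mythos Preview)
Your proposal is correct and follows essentially the same route as the paper: absorb $V^*$ into a QR factorization of $AV^*$, use that right-multiplication by an invertible upper triangular matrix leaves the GEPP permutation unchanged (via \Cref{thm:GEPP_unique}), and then invoke right-invariance of Haar measure together with \Cref{cor:haar o} and its $\SO/\SU$ analogue. Your explicit determinant correction $Q = Q_0 D$ for the $\SO(n)$ and $\SU(n)$ cases is exactly the coset-representative adjustment the paper handles via the earlier $D(x)$ discussion, so the arguments coincide.
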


\begin{remark}
Extensions of (II) from \Cref{thm:main}  are less direct since (II) relies heavily on explicit structural properties of Haar-butterfly matrices, $\B_s(N,\Sigma_S)$. Analogous results can be established if the focus is restricted to matrices in $\displaystyle \bigotimes^n \mathbb R^{2\times 2}$.
\end{remark}

\section{Permutations from GEPP and uniform sampling of \texorpdfstring{{\boldmath$S_n$}}{2}}
\label{sec:perms}

Recall how the permutation matrix factor is formed when applying GEPP to a matrix. First a search for a largest magnitude element is performed on the first column of $A = A^{(1)}$. After that value is found, say at index $i_1$ (for $i_1 \ge 1$), then the corresponding row permutation for the transposition $\sigma_1 = (1 \ i_1)$ is applied to $A$ (using $P^{(1)} = P_{\sigma_1}$), after which the standard GE step follows to iteratively eliminate each element under the first diagonal value to form $A^{(2)}$ using the pivot element and the resulting lower triangular GE elimination factor {$ L^{(1,1)}$}, so that 
\begin{equation}
A^{(2)} = (L^{(1,1)})^{-1}P^{(1)}A^{(1)}.
\end{equation}
Then GE continues with a search for the largest magnitude element on the leading column of $A^{(2)}_{2:n,2:n}$, which results in a transposition of the form $\sigma_2 = (2 \ i_2)$ for $i_2 \ge 2$. The corresponding row permutation is performed (using $P^{(2)}$) followed by the standard GE elimination step using the second pivot (using {$ L^{(2,2)}$}), with which one groups the lower triangular  and permutation matrix factors using the relation $L^{(j,k)} = P^{(j)}L^{(j-1,k)}P^{(j)}$\footnote{Note $P^{(j)} = (P^{(j)})^{-1}$ since these permutation matrices correspond to transpositions that have order 2.}, so that
\begin{equation}
A^{(3)} = (L^{(2,2)})^{-1} P^{(2)} A^{(2)} = (L^{(2,1)}L^{(2,2)})^{-1}P^{(2)}P^{(1)}A^{(1)}.
\end{equation}
The process then continues, moving one column at a time, which results in the final GEPP factorization $PA = LU$ for {$P = P^{(n)}P^{(n-1)}\cdots P^{(2)}P^{(1)}$, $L = L^{(n,n)} \cdots L^{(n,2)}L^{(n,1)}$ and $U = A^{(n)}$, where necessarily $P^{(n)} = \V I$.}

Hence, the resulting permutation matrix factor is built up step by step using $\sigma_k = (k \ i_k)$, resulting in 
\begin{equation}
P = P^{(n-1)} \cdots P^{(2)}P^{(1)} = P_{\sigma_{n-1}} \cdots P_{\sigma_2} P_{\sigma_1} = P_{\sigma_{n-1} \cdots \sigma_2 \sigma_1} = P_\sigma
\end{equation}
for
\begin{equation}
\label{eq:perm_form}
    \sigma = (n-1 \ i_{n-1}) \cdots (2 \ i_2)(1 \ i_1)
\end{equation}
where $j \le i_j \le n$ for each $j$.
\begin{remark}
If $i_k = k$, then \eqref{eq:perm_form} can abstain from including the trivial permutation $(k \ i_k)$. In particular, \eqref{eq:perm_form} can trivially be expanded to $\sigma = (n \  i_n)\sigma$ where necessarily $i_n = n$.
\end{remark}

\eqref{eq:perm_form} is useful because every permutation can be put in this form.

\begin{lemma}
\label{lemma:perm}
Every permutation in $S_n$ can be written in the form \eqref{eq:perm_form}.
\end{lemma}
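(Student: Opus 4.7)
The plan is to proceed by induction on $n$, using the simple observation that the rightmost factor in the product \eqref{eq:perm_form} is the only one whose support includes the index $1$.

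More precisely, I would strengthen the statement to an inductive hypothesis over an initial segment: for every $1 \le k \le n$, every permutation of $\{k,k+1,\ldots,n\}$ (extended by the identity on $\{1,\ldots,k-1\}$) admits a representation $(n-1 \ i_{n-1})(n-2 \ i_{n-2})\cdots(k \ i_k)$ with $j \le i_j \le n$. The base case $k=n$ is trivial (only the identity on a singleton, realized by the empty product). For the inductive step going from $k+1$ to $k$, the key point is that every transposition $(j \ i_j)$ with $j \ge k+1$ fixes $k$, so any product $\rho = (n-1 \ i_{n-1})\cdots(k+1 \ i_{k+1})$ fixes $k$ and acts as a permutation of $\{k+1,\ldots,n\}$. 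Given $\pi$ a permutation of $\{k,\ldots,n\}$, I would set $i_k = \pi^{-1}(k)$, noting $k \le i_k \le n$, and define $\pi' = \pi \cdot (k \ i_k)$. Then $\pi'(k) = \pi(\pi^{-1}(k)) = k$, so $\pi'$ restricts to a permutation of $\{k+1,\ldots,n\}$. The inductive hypothesis produces the decomposition $\pi' = (n-1 \ i_{n-1})\cdots(k+1 \ i_{k+1})$, and right-multiplying by $(k \ i_k)$ (which is an involution) yields the desired form for $\pi$. Taking $k=1$ gives the lemma.

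As a consistency check (and an alternative route), I would note a counting argument: the number of tuples $(i_1,\ldots,i_{n-1})$ with $j \le i_j \le n$ is $\prod_{j=1}^{n-1}(n-j+1) = n!$, which matches $|S_n|$. The same observation that $\sigma^{-1}(1) = i_1$ (since every factor $(j \ i_j)$ with $j \ge 2$ fixes $1$) shows the map from tuples to permutations is injective, hence bijective; this gives an independent proof and also establishes uniqueness of the representation, which will be useful elsewhere in the paper.

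The argument is essentially routine; the only mild subtlety is bookkeeping around the index range $j \le i_j \le n$ and convincing oneself that the inductive hypothesis must be phrased for permutations of the tail $\{k,\ldots,n\}$ rather than for $S_{n-1}$ after a relabeling, so that the bounds on $i_j$ carry through without a shift. No deeper obstacle is anticipated.
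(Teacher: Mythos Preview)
Your proposal is correct and mirrors the paper's own proof closely: the paper gives precisely the same counting argument (showing the $n!$ tuples inject into $S_n$, with injectivity argued via the first index where two tuples differ, which amounts to your observation that $\sigma^{-1}(1)=i_1$) and then an alternative inductive proof via the coset decomposition $S_n=\bigsqcup_j S_{n-1}(1\ j)$, which is exactly your tail-segment induction rephrased. The only cosmetic difference is emphasis---the paper leads with counting and offers induction as the alternative, whereas you do the reverse---and your explicit formula $i_k=\pi^{-1}(k)$ is a slightly cleaner way to state the inductive step than the paper's coset language.
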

In particular, every permutation is realizable as corresponding to the GEPP permutation matrix factor for some input nonsingular matrix.  {This is equivalently explored in terms of permutation ``alignments'' in \cite{flajolet}. I will include a brief proof for completeness.}

\begin{proof}By counting, it is clear $n!$ inputs can be used to form $\sigma$ ($n$ choices for $i_1$, $n-1$ choices for $i_{n-1}$, and so on). Moreover, we see this correspondence is one-to-one: suppose $\sigma$ and $\sigma'$ are formed using distinct inputs, and let $k$ be the minimal index such that $i_k \ne i_k'$. Without loss of generality, assume $k = 1$. Let $\rho = \sigma(1 \ i_1)$ and $\rho' = \sigma'(1 \ i_1)$. Then $\rho(1) = \sigma(i_1)=1$ while $\rho'(1) = \sigma'(i_1) > 1$; it follows $\sigma \ne \sigma'$, which yields this  mapping is an injection and hence a bijection since $|S_n|$ is finite.

Alternatively, this can be realized through induction: this clearly holds for $n = 2$ since $S_2 = \{1,(1\ 2)\}$. Now assume it holds for $n-1$, then one can realize (using the inductive hypothesis) that $S_{n-1} \cong \{(n-1 \ i_{n-1})\cdots (2 \ i_2): j \le i_j \le n\}$ as a subgroup of $S_{n}$, by recognizing the permutations that fix 1 in $S_{n}$ is isomorphic to $S_{n-1}$. Moreover, adopting the crude labeling of $S_{n-1}$ for this subgroup of $S_{n}$, $[S_{n}:S_{n-1}] = n!/(n-1)! = n$, and every (right) coset representative for $S_{n-1}$ can be provided by $ S_{n-1}(1 \ i_1)$ so that $S_{n} = \bigsqcup_{j=1}^{n+1} S_{n-1} (1 \ j)$.
\end{proof}

The coset decomposition structure for $S_n$ used in the alternative proof of \Cref{lemma:perm} was utilized by Diaconis and Shashahani through an application of the Subgroup algorithm for generating a Haar measure on $S_n$ \cite{DiSh87}. Their result yields a means to sample uniformly from $S_{n}$ as the convolution of the Haar measure on $S_{n-1}$ and the uniform measure on $\{(1 \ j): 1 \le j \le n\}$. Explicitly, {one} can generate a uniform permutation $\sigma \in S_{n}$ by first uniformly (and independently) sampling both $\rho \in S_{n-1}$ and $\sigma_1 \in \{(1 \ j): 1\le j\le n\}$, and then forming $\sigma = \rho \sigma_1$. Iteratively applying this, one can uniformly sample a permutation by uniformly sampling each $i_k \in \{k,k+1,\ldots,n\}$, which yields a permutation in the form \eqref{eq:perm_form}. This establishes:
\begin{corollary}
\label{cor:uniform Sn}
If $i_k \sim \Uniform\{k,k+1,\ldots,n\}$ for each $k = 1,\ldots,n-1$, then 
\begin{equation}
(n-1 \ i_{n-1}) \cdots (2 \ i_2)(1 \ i_1) \sim \Uniform(S_n).
\end{equation}
\end{corollary}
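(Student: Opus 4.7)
The plan is to present this corollary as an essentially immediate consequence of Lemma~\ref{lemma:perm}, via a pushforward argument, and then to note an alternative inductive route through Theorem~\ref{thm:subgroup} for conceptual clarity.

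For the main approach, I would first invoke Lemma~\ref{lemma:perm} to identify the map
\[\Phi:(i_1,i_2,\ldots,i_{n-1})\mapsto (n-1\ i_{n-1})\cdots(2\ i_2)(1\ i_1)\]
as a bijection between the product set $D:=\prod_{k=1}^{n-1}\{k,k+1,\ldots,n\}$ and $S_n$; the lemma's proof supplies injectivity, and a counting check gives $|D|=\prod_{k=1}^{n-1}(n-k+1)=n!=|S_n|$. The next step is to observe that independence of the $i_k$ together with $i_k\sim\Uniform\{k,\ldots,n\}$ makes the tuple $(i_1,\ldots,i_{n-1})$ uniformly distributed on $D$, since each tuple occurs with probability $\prod_{k=1}^{n-1}(n-k+1)^{-1}=1/n!$. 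The final step is then the standard pushforward fact: a bijection transports the uniform measure on $D$ to the uniform measure on $S_n$, so $\Phi(i_1,\ldots,i_{n-1})\sim\Uniform(S_n)$.

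An alternative approach, closer in spirit to the second (inductive) proof of Lemma~\ref{lemma:perm}, would use induction on $n$ combined with Theorem~\ref{thm:subgroup}. The base case $n=2$ is trivial. In the inductive step, I would identify $S_{n-1}$ with the stabilizer of $1$ inside $S_n$, so that $\{(1\ j):1\le j\le n\}$ forms a full set of right coset representatives. Since Haar measure on a finite group coincides with the uniform measure, the right-coset form of Theorem~\ref{thm:subgroup} yields that if $\rho\sim\Uniform(S_{n-1})$ and $i_1\sim\Uniform\{1,\ldots,n\}$ independently, then $\rho\cdot(1\ i_1)\sim\Uniform(S_n)$. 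The inductive hypothesis applied to $\rho$ supplies the factorization $\rho=(n-1\ i_{n-1})\cdots(2\ i_2)$ with the remaining $i_k$ independent and uniform on $\{k,\ldots,n\}$, closing the recursion.

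There is no substantive obstacle once Lemma~\ref{lemma:perm} is in hand; the argument is a counting observation packaged with a bijection. The only real bookkeeping subtlety appears in the inductive variant: one must use \emph{right} cosets so that multiplication by $(1\ i_1)$ on the right matches the order in which the transpositions compose in~\eqref{eq:perm_form}, rather than inadvertently using left cosets and producing a reversed product.
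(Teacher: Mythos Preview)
Your proposal is correct. Your \emph{alternative} inductive route via Theorem~\ref{thm:subgroup} is exactly how the paper justifies the corollary: it invokes the coset decomposition from the second proof of Lemma~\ref{lemma:perm} together with the Subgroup algorithm of Diaconis--Shahshahani, then iterates to peel off one transposition at a time. Your \emph{main} approach---pushing the uniform measure on $D$ through the bijection $\Phi$ supplied by Lemma~\ref{lemma:perm}---is a slightly more direct and self-contained variant that bypasses the recursion and the appeal to Theorem~\ref{thm:subgroup} entirely; it trades the conceptual link to Haar sampling for a one-line counting argument. Both reach the same conclusion with essentially the same content, so either presentation is fine.
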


Moreover, the steps where pivot movements occur can be read off directly from $\sigma$ when it is written in the form \eqref{eq:perm_form}. If $P_\sigma$ is the resulting permutation matrix factor from applying GEPP to $A$, then we explicitly know at step $k$ in GE, the pivot search on $A^{(k)}$ resulted in the transposition $(k \ i_k)$. This translates directly to how many pivot movements are needed through a particular implementation of GEPP by counting how many of these transpositions have $i_k > k$. (So no pivot movement is needed if $k = i_k$, since this translates to the leading pivot entry being maximal in its respective column.)

It follows then, explicitly, if $A$ is a random matrix such that the resulting GEPP permutation matrix factor $P$ satisfies $P \sim \Uniform(\mathcal P_n) = \Haar(\mathcal P_n)$\footnote{This is equivalent to the statement $P = P_\sigma$ for $\sigma \sim \Uniform(S_n)$.}, then {$\Pi(A)$}, the number of pivot movements needed during this implementation of GEPP on $A$,  necessarily satisfies 
\begin{equation}
\label{eq: num pivots}
    \P(\Pi(A) = k) = \frac{\#\{\sigma \in S_n: j = i_j \mbox{ for $n-k$ indices $j$}\}}{|S_n|}. 
\end{equation}
Furthermore, for $\sigma$ of the form \eqref{eq:perm_form}, the number of times $j > i_j$ then corresponds precisely to the number of disjoint cycles in the representation of $\sigma$: if $i_k = k$, then $k$ is contained in a cycle with elements no larger than $k$ (each successive row transposition will fix $k$); so $k$ is the maximal element in its cycle. (Recall if $k$ is a fixed point of a permutation, then $k$ comprises its own 1-cycle.) If $i_k > k$, then $k$ belongs to a cycle with a larger element. So counting the number of times $i_k = k$ is then equivalent to counting the number of disjoint cycles in the disjoint cycle decomposition of $\sigma$, since these can be enumerated by using their maximal elements. As is established in \Cref{sec:stirling}, these then align exactly with the absolute Stirling numbers of the first kind, $|s(n,k)|$.

\begin{example}
Consider $\sigma  = (5 \ 6)(3 \ 4)(2 \ 4)(1 \ 3)\in S_6$. By default $i_n = n$, so $i_6 = 6$ will be considered as not needing a pivot movement on the $n^{th}$ GE step (this is always vacuously true since GE completes after $n-1$ steps). Here, we have only $i_k = k$ for $k = 4$ and $k=6$, so we should expect $\sigma$ consists of precisely 2 cycles in its disjoint cycle decomposition, and 4 and 6 are the maximal elements in those two cycles. This is verified by 
finding the final disjoint cycle form of $\sigma = (1 \ 4\ 2 \ 3)(5 \ 6)$, for which $4$ and $6$ are the maximal elements of each of the disjoint cycles.
\end{example}

\begin{remark}
\label{rmk:max pivot}
Connecting the above conversation with the form \eqref{eq:perm_form}, one can form a $n$-cycle in $S_n$ by requiring $i_k > k$ for all $k$ up to $n-1$. Moreover, connecting this further to \Cref{cor:uniform Sn}, one can uniformly sample $n$-cycles by sampling $i_k \sim \Uniform\{k+1,\ldots,n\}$ for $k=1,2,\ldots,n-1$. Let
\begin{equation}
\label{eq:n-cycles}
\mathcal P_n^{n\operatorname{-cycles}} = \{P_\sigma: \mbox{$\sigma$ is a $n$-cycle in $S_n$}\}
\end{equation}
denote the subset of permutation matrices $\mathcal P_n$ that correspond to the $n$-cycles. If the corresponding GEPP permutation matrix factor is in $\mathcal P_n^{n\operatorname{-cycles}}$ for a $n\times n$ matrix $A$, then every GE step required a pivot movement for $A$. Moreover, if $i_k \sim \Uniform\{k+1,\ldots,n\}$ for each $k$ is used to generate the corresponding $n$-cycle $\sigma = (n-1 \ i_{n-1}) \cdots (1 \ i_1)$, then $P_\sigma \sim \Uniform(\mathcal P_n^{n\operatorname{-cycles}})$. 

\end{remark}

\subsection{Proof of (I) of \Cref{thm:main}}
Now we have the sufficient tools to establish:
\begin{quotation}\normalsize
\noindent \textsc{\Cref{thm:main}}: (I) \emph{If $A$ is a $n\times n$ random matrix with independent columns whose first $n-1$ columns have (absolutely) continuous iid entries, then $P \sim \Uniform(\mathcal P_n)$ for $PA = LU$ the GEPP factorization of $A$ for $n\ge 2$ and {$\Pi(A) \sim n - \Upsilon_n$}.}
\end{quotation}

\begin{proof}
Suppose $A$ satisfies the hypothesis {for (I)}, and let $P$ be the associated GEPP permutation matrix factor for $A$. We will prove $P \sim \Uniform(\mathcal P_n)$ using induction on $n \ge 2$. Suppose $n = 2$, so the first column of $A$ has continuous iid entries $A_{11}$ and $A_{21}$. Using GEPP, a pivot will be needed only if $|A_{21}| > |A_{11}|$, but $\P(|A_{11}| > |A_{21}|) = \frac12$ since $A_{11} \sim A_{21}$ (and $\P(|A_{11}| = |A_{21}|) = 0$ since these are continuous random variables). Hence, $P = P_{(1 \ 2)}^\zeta \sim \Haar(\mathcal P_2)$ for $\zeta \sim \Bernoulli(\frac12)$. 

Now assume the result holds for any random matrix of dimension $n-1$ with independent columns whose first $n-2$ columns have continuous iid entries. Let $A$ be the dimension $n$ matrix satisfying the statement. Using GEPP on $A$, for the first pivot search using the leading column of $A = A^{(1)}$, we have 
\begin{equation}
\P(\max(|A_{11}|,|A_{21}|,\ldots,|A_{n1}|) = |A_{11}|) = \P(\max(|A_{11}|,|A_{21}|,\ldots,|A_{n1}|) = |A_{k1}|)
\end{equation}
for each $k = 1,2,\ldots,n$ since $A_{11} \sim A_{k1}$ are continuous iid. It follows the first GEPP row transposition is of the form $P^{(1)} = P_{\sigma_1}$ for $\sigma_1 = (1 \ i_1)$ for $i_1 \sim \Uniform\{1,2,\ldots,n\}$, where $i_1 = \operatorname{argmax}_k |A_{k1}|$. Now applying the GE elimination factor $L^{(1,1)}$ results in $A^{(2)} = (L^{(1,1)})^{-1}P^{(1)}A^{(1)}$. Moreover, $\tilde A^{(1)} = P^{(1)}A^{(1)}$ still satisfies the (I) hypothesis since row permutations preserve each of the column independence and iid properties of $A^{(1)}$. $A^{(2)}$ then has entries of the form 
\begin{equation}
A^{(2)}_{ij} = \tilde A_{ij} - \tilde A_{1j} \cdot L^{(1)}_{i1} = \tilde A_{ij} - \tilde A_{1j} \cdot \frac{\tilde A_{i1}}{\tilde A_{11}}.
\end{equation}
for $i,j \ge 2$. In particular, since the columns are independent and have continuous iid entries, then $L^{(1)}_{i1}$ is independent of $\tilde A_{ij}$ for each $i$ when $j \ge 2$, while $\tilde A_{1j} \cdot L^{(1)}_{i1}$ is independent of $\tilde A_{ij}$ for each $i \ge 2$, so that $B = A^{(2)}_{2:n,2:n}$ also satisfies the (I) hypothesis. Now we can apply the inductive hypothesis to $B$ to yield a GEPP factorization $PB=LU$ where $P \sim \Uniform(\mathcal P_{n-1})$. Embedding $\mathcal P_{n-1}$ into $\mathcal P_n$ using the map $Q \mapsto 1 \oplus Q$ yields then the resulting GEPP permutation matrix factor 
\begin{equation}
P_\sigma = (1 \oplus P)P^{(1)}
\end{equation}
for $A$. Moreover, since $P \sim \Uniform(\mathcal P_{n-1})$, then $1 \oplus P = P_\rho$ for $\rho \sim \Uniform(S_{n-1})$\footnote{Now associating $S_{n-1}$ with the isomorphic subgroup of $S_n$ that fixes 1.} while $P^{(1)} = P_{\sigma_1}$ for $\sigma_1 \sim \Uniform\{(1 \ j): j=1,2,\ldots,n\}$, so that by the Subgroup algorithm we have $\sigma = \rho \sigma_1 \sim \Uniform(S_n)$. It follows $P_\sigma \sim \Uniform(\mathcal P_n)$. This establishes the first statement part of (I) from \Cref{thm:main}.

The prior conversation up to \eqref{eq: num pivots} establishes the correspondence
\begin{align*}
    \P({\Pi(A)} = n-k) &= \frac{\#\{\sigma \in S_n: j = i_j \mbox{ for $k$ indices $j$}\}}{n!}\\
    &= \frac{\#\{\sigma \in S_n: \mbox{$\sigma$ has $k$ cycles in its disjoint cycle decomposition}\}}{n!} \\
    &= \frac{|s(n,k)|}{n!}\\
    &=\P(\Upsilon_n = k)
\end{align*}
for $k = 1,2,\ldots,n$, which  yields the desired result {$\Pi(A) \sim n-\Upsilon_n$}.
\end{proof}

\section{Butterfly permutations}
\label{sec:butterfly}

Using the Kronecker product factorization for $B(\boldsymbol\t) \in \B_s(N)$ (where $N = 2^n$) along with the mixed-product property, then matrix factorizations of each Kronecker component yield the matrix factorization of the resulting butterfly matrix. This was used in \cite{jpm} to yield both the eigenvalue (Schur) decomposition as well as the $LU$ factorizations of scalar simple butterfly matrices, $\B_s(N)$. For {reference}, we will restate this latter result: 
\begin{proposition}[\cite{jpm}]
    \label{prop: ge factors}
    Let $B = B(\boldsymbol \t) \in \B_s(N)$. 
    
    \normalfont{(I)} If $\cos\t_i \ne 0$ for all $i$, then $B$ has the GENP factorization $B = L_{\boldsymbol \t}U_{\boldsymbol \t}$ where $L_{\boldsymbol\t} = \bigotimes_{j=1}^n L_{\t_{n-j+1}}$ and $U_{\boldsymbol \t} = \bigotimes_{j=1}^n U_{\t_{n-j+1}}$ for
    \begin{equation}
    \label{eq: genp factors}
        L_{\t} = \begin{bmatrix}
            1 &0 \\-\tan\t & 1
        \end{bmatrix} \quad \mbox{and} \quad U_{ \t} =  \begin{bmatrix}
            \cos\t & \sin\t\\
            0&\sec\t
        \end{bmatrix}.
    \end{equation}
    
    \normalfont{(II)} Using $\boldsymbol \t \in [0,2\pi)^n$, let 
    \begin{equation}
    {e_j = \left\{\begin{array}{cl}
    1 & \mbox{if $|\tan \theta_j| > 1$,}\\
    0 & \mbox{if $|\tan \theta_j| \le 1$}
    \end{array}\right.}
    \end{equation}
    for each $j$. Let $\boldsymbol \t ' \in [0,2\pi)^n$ be such that $\theta_j' = \frac\pi2e_j + (-1)^{e_j} \t_j = \theta_j e_j + (\frac\pi2 - \theta_j)(1-e_j)$ for each $j$. If $|\tan \t_j| \ne 1$ for any $j$, then the GEPP  factorization of $B$ is $PB = LU$ where  $P = P_{\boldsymbol \t} = \bigotimes_{j=1}^n P_{\t_{n-j+1}}$, $L = L_{\boldsymbol\t'}$, and $U = U_{\boldsymbol \t'} D_{\boldsymbol \t}$ for 
    \begin{equation}
    P_\t = P_{(1 \ 2)}^{e_j} \quad \mbox{and} \quad D_\t = (-1)^{1-e_j} \oplus 1.
    \end{equation}
    Moreover,  $(PB)^{(k)} = B(\boldsymbol \t')^{(k)}D_{\boldsymbol \t}$ for all $k$ where $B(\boldsymbol\t') \in \B_s(N)$.
\end{proposition}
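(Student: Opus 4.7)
The plan is to exploit the Kronecker product factorization $B(\boldsymbol\theta) = \bigotimes_{j=1}^n B(\theta_{n-j+1})$ together with the mixed-product property $(A\otimes C)(B\otimes D) = (AB)\otimes(CD)$, so that each $N\times N$ factorization reduces to verifying an analogous $2\times 2$ identity. First I would handle part (I) at the $2\times 2$ level by direct multiplication: $L_\theta U_\theta = B(\theta)$ whenever $\cos\theta \ne 0$, so $\sec\theta$ is defined. Taking the Kronecker product over all components then yields $B(\boldsymbol\theta) = L_{\boldsymbol\theta}U_{\boldsymbol\theta}$ via the mixed-product property. To identify this as the GENP factorization, I would invoke the standard fact that Kronecker products of (unit) lower triangular matrices are (unit) lower triangular, and similarly for upper triangular matrices (each $L_{\theta_j}$ has unit diagonal), and then appeal to uniqueness of the $LU$ factorization, which applies because every leading principal minor of $B(\boldsymbol\theta)$ is a product of cosines and hence nonzero.

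For part (II), I would again begin at the $2\times 2$ level. When $|\tan\theta|\le 1$, the entry $\cos\theta$ in position $(1,1)$ dominates $-\sin\theta$ in magnitude so no swap is needed ($e_j = 0$, $P_\theta = I$); when $|\tan\theta|>1$ a swap brings $-\sin\theta$ into the pivot slot ($e_j = 1$, $P_\theta = P_{(1\ 2)}$). A direct computation then verifies the key identity
\begin{equation*}
P_\theta B(\theta) = B(\theta')\, D_\theta
\end{equation*}
in both cases, where $\theta' = \tfrac\pi2 e_j + (-1)^{e_j}\theta$ is chosen so that the swap sends $\theta$ to $\pi/2 - \theta$, exchanging $\sin$ and $\cos$, and $D_\theta$ absorbs the resulting sign. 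Applying the mixed-product property then lifts this to
\begin{equation*}
P_{\boldsymbol\theta} B(\boldsymbol\theta) = \bigotimes_{j=1}^n \bigl(P_{\theta_{n-j+1}} B(\theta_{n-j+1})\bigr) = \bigotimes_{j=1}^n \bigl(B(\theta_{n-j+1}')\, D_{\theta_{n-j+1}}\bigr) = B(\boldsymbol\theta')\, D_{\boldsymbol\theta}.
\end{equation*}
Since $|\tan\theta_j'| < 1$ strictly under the hypothesis $|\tan\theta_j|\ne 1$, part (I) now applies to $B(\boldsymbol\theta')$, producing the factorization $P_{\boldsymbol\theta} B(\boldsymbol\theta) = L_{\boldsymbol\theta'}(U_{\boldsymbol\theta'} D_{\boldsymbol\theta})$ with the stated $L$ and $U$.

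The remaining task is confirming this is the GEPP factorization rather than merely some $LU$ decomposition of $P_{\boldsymbol\theta} B(\boldsymbol\theta)$. For this I would verify inductively that GENP suffices on $B(\boldsymbol\theta') D_{\boldsymbol\theta}$: the factor $D_{\boldsymbol\theta}$ is diagonal so it does not affect pivot magnitudes, and the strict inequality $|\tan\theta_j'| < 1$ at every $2\times 2$ component forces the diagonal pivot to strictly dominate below-diagonal entries at every intermediate GE step, ruling out any further row swaps under the GEPP criterion. Uniqueness of the GEPP factorization under strict pivot dominance then matches this to the algorithmic output. The supplementary identity $(PB)^{(k)} = B(\boldsymbol\theta')^{(k)} D_{\boldsymbol\theta}$ follows by a short induction on $k$, using that GE elimination acts on the left and therefore commutes with right multiplication by the diagonal $D_{\boldsymbol\theta}$.

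The main obstacle is the bookkeeping around the sign correction $D_{\boldsymbol\theta}$ and confirming that the intermediate matrices $(PB)^{(k)}$ retain enough Kronecker structure to carry out the pivot-dominance argument cleanly at each step. Since $D_{\boldsymbol\theta}$ acts only by column signs and cannot change magnitudes, the pivot analysis in fact reduces to a statement about $B(\boldsymbol\theta')$ alone; the challenge is making rigorous the claim that the leading column of each $B(\boldsymbol\theta')^{(k)}_{k:N,k:N}$ inherits the $|\tan\theta_j'|<1$ dominance from the tensor factors, which is where the recursive Kronecker structure of GE on butterfly matrices must be leveraged most carefully.
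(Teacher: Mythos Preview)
Your approach is correct and matches what the paper describes. Note that the paper does not actually prove this proposition---it is quoted from \cite{jpm} and merely restated for reference---but the single sentence of explanation the paper does offer (``Using the Kronecker product factorization for $B(\boldsymbol\theta) \in \B_s(N)$ \ldots\ along with the mixed-product property, then matrix factorizations of each Kronecker component yield the matrix factorization of the resulting butterfly matrix'') is exactly the strategy you carry out: reduce each part to a $2\times 2$ identity, verify it directly, and lift via the mixed-product property, with the GEPP identification in (II) following from the strict inequality $|\tan\theta_j'|<1$ and the fact that the diagonal sign matrix $D_{\boldsymbol\theta}$ does not affect pivot magnitudes.
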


In particular, $P \in \mathcal P_N^{(B)}$ (cf. \eqref{eq:def butterfly perm}) for $PB = LU$ the GEPP factorization of $B \in \B_s(N)$. Note if $\theta \sim \Uniform([0,2\pi))$ then $\mathbb P(|\tan \theta| \le 1) = \frac12$, so the resulting GEPP permutation matrix factor $P_{\boldsymbol \t}$ for $B(\boldsymbol \t) \sim \B_s(N,\Sigma_S)$, a Haar-butterfly matrix, then satisfies
\begin{equation}
\P(P_{\boldsymbol \t} = Q) = 2^{-n} = \frac1N = \frac1{|\mathcal P_N^{(B)}|}
\end{equation}
for each $Q \in \mathcal P_N^{(B)}$ (using also  \Cref{prop:haar butterfly,prop: ge factors}). This establishes the first part of (II) from \Cref{thm:main}:
\begin{corollary}
\label{cor: haar perm butterfly}
If $B \sim \B_s(N,\Sigma_S)$, then $P \sim \Uniform(\mathcal P_N^{(B)})$ for the GEPP  factorization $PB = LU$.
\end{corollary}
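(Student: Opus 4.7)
The plan is to read off the distribution of $P$ directly from the explicit Kronecker-product form of the GEPP factors provided by Proposition \ref{prop: ge factors}(II). That proposition states that, whenever $|\tan\theta_j|\ne 1$ for every $j$, the GEPP permutation factor of $B(\boldsymbol\theta)\in\B_s(N)$ is
\[
P_{\boldsymbol\theta} \;=\; \bigotimes_{j=1}^n P_{(1\ 2)}^{e_{n-j+1}}, \qquad e_j = \mathbf 1\{|\tan\theta_j|>1\}.
\]
Hence $P_{\boldsymbol\theta}$ is completely determined by the binary vector $\V e = (e_1,\ldots,e_n)\in\{0,1\}^n$, and it suffices to show that $\V e$ is uniform on $\{0,1\}^n$.

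Next I would handle the measurable details. For $\theta \sim \Uniform([0,2\pi))$, a direct quadrant-by-quadrant computation gives $\P(|\tan\theta|=1)=0$ (only four points) and $\P(|\tan\theta|>1)=\tfrac12$. By the definition of $\B_s(N,\Sigma_S)$ (from Section \ref{subsec:butterfly}), the angles $\theta_1,\ldots,\theta_n$ are iid $\Uniform([0,2\pi))$; in particular the excluded set $\{|\tan\theta_j|=1\text{ for some }j\}$ has probability zero, so Proposition \ref{prop: ge factors}(II) applies almost surely. The indicators $e_1,\ldots,e_n$ are then iid $\Bernoulli(\tfrac12)$, and $\V e$ is uniform on $\{0,1\}^n$.

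Finally, I would invoke the parameterization of $\mathcal P_N^{(B)}$ recalled at the start of Section \ref{sec:butterfly}: the map $\V e \mapsto \bigotimes_{j=1}^n P_{(1\ 2)}^{e_{n-j+1}}$ is a bijection from $\{0,1\}^n$ onto $\mathcal P_N^{(B)}$, in particular $|\mathcal P_N^{(B)}|=2^n=N$. Pushing the uniform law of $\V e$ through this bijection yields
\[
\P(P_{\boldsymbol\theta} = Q) = 2^{-n} = \frac1{|\mathcal P_N^{(B)}|} \qquad \text{for every } Q \in \mathcal P_N^{(B)},
\]
which is exactly $P_{\boldsymbol\theta} \sim \Uniform(\mathcal P_N^{(B)})$.

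There is no genuine obstacle here; the only point requiring care is verifying that the Kronecker parameterization exhausts $\mathcal P_N^{(B)}$ (so that uniformity on $\{0,1\}^n$ transports to uniformity on $\mathcal P_N^{(B)}$ rather than a proper subset), which is immediate from the definition of butterfly permutations. Everything else is a direct application of Proposition \ref{prop: ge factors}(II) and the elementary computation $\P(|\tan\theta|>1)=\tfrac12$ for a uniform angle.
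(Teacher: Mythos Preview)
Your proposal is correct and follows essentially the same approach as the paper: both arguments invoke Proposition~\ref{prop: ge factors}(II) to express $P_{\boldsymbol\theta}$ as a Kronecker product determined by the indicators $e_j=\mathbf 1\{|\tan\theta_j|>1\}$, note that $\P(|\tan\theta|>1)=\tfrac12$ for a uniform angle so that the $e_j$ are iid $\Bernoulli(\tfrac12)$, and conclude $\P(P_{\boldsymbol\theta}=Q)=2^{-n}=1/|\mathcal P_N^{(B)}|$. Your write-up is simply more explicit about the null set $\{|\tan\theta_j|=1\}$ and the bijection $\{0,1\}^n\to\mathcal P_N^{(B)}$ than the paper's one-paragraph derivation.
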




Next, we can connect the resulting GEPP permutation matrix factors for Haar-butterfly matrices to the corresponding number of total pivot movements needed. This can be accomplished by finding the associated permutation $\sigma \in S_n$ written in the form \eqref{eq:perm_form} for the corresponding butterfly permutation. 
\begin{proposition}
\label{prop: butterfly perm form}
If $n = 1$, $\mathcal P_2^{(B)} = \mathcal P_2$. For $n > 1$, then $P_\sigma \in \mathcal P_{2N}^{(B)}$ where $\sigma \in S_{2N}$ written in the form \eqref{eq:perm_form} is one of four options: either $\sigma = 1$ if $P_\sigma = {\V I_{2N} = \V I_2 \otimes \V I_N}$ or $\sigma \ne 1$ is the product of $N$ disjoint transpositions, where  $\sigma = (N \ 2N)\cdots(2 \ N+2)(1 \ N+1)$ if $P_\sigma = P_{(1 \ 2)} \otimes \V I_N$, or
\begin{equation}
\begin{array}{ll}
\sigma = (a_1+N \ a_2 + N) \cdots (a_{N-1} + N \ a_N+N)(a_1 \ a_2) \cdots (a_{N-1} \ a_N) &\mbox{if $P_\sigma = \V I_2 \otimes P_\rho$, or}\\
\sigma = (a_2 \ a_1+N)(a_1 \ a_2+N) \cdots (a_N \ a_{N-1} + N)(a_{N-1} \ a_N+N) &\mbox{if $P_\sigma = P_{(1\ 2)} \otimes P_\rho$,}
\end{array}
\end{equation}
where $P_\rho \in \mathcal P_N^{(B)}$ with $\rho = (a_1 \ a_2) \cdots (a_{N-1} \ a_N) \in S_N$ in the form \eqref{eq:perm_form} such that $a_{2k-1} < a_{2k}$ for each $k$ unless $\rho = 1$ and $a_{2k-1} > a_{2k+1}$ for each $k$ when $n > 2$. 

\end{proposition}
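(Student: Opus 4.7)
The plan is to induct on $n$, exploiting the recursive Kronecker description $\mathcal{P}_{2N}^{(B)} = \{A \otimes B : A \in \mathcal{P}_2^{(B)},\ B \in \mathcal{P}_N^{(B)}\}$. The base case $n = 1$ is immediate since $\mathcal{P}_2^{(B)} = \{\V{I}_2, P_{(1\ 2)}\}$ exhausts $\mathcal{P}_2$. For the inductive step with $n > 1$, I would enumerate the four Kronecker configurations $A \otimes B$ with $A \in \{\V{I}_2, P_{(1\ 2)}\}$ and $B \in \{\V{I}_N, P_\rho\}$ (the latter assuming $P_\rho \ne \V{I}_N$), and in each case compute $\sigma \in S_{2N}$ from the explicit action of the Kronecker product on $\{1, \ldots, 2N\}$, then arrange the resulting transpositions into the normalized form \eqref{eq:perm_form}.

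The identity case is vacuous. For $P_\sigma = P_{(1\ 2)} \otimes \V{I}_N$, I would observe the block-swap action $\sigma(k) = k + N$ for $k \le N$ and $\sigma(k) = k - N$ otherwise, yielding the $N$ disjoint transpositions $(k\ k+N)$; these commute, so sorting by decreasing leading index gives the claimed $(N\ 2N)\cdots(2\ N+2)(1\ N+1)$. For $P_\sigma = \V{I}_2 \otimes P_\rho$, block-diagonality forces $\sigma$ to act as $\rho$ on $\{1,\ldots,N\}$ and as $\rho$ shifted by $N$ on $\{N+1,\ldots,2N\}$; applying the inductive hypothesis to $\rho$ and concatenating the shifted block in front (since its leading indices all exceed $N$) yields the asserted expression. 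For $P_\sigma = P_{(1\ 2)} \otimes P_\rho$, direct calculation gives $\sigma(a_{2k-1}) = a_{2k}+N$ and $\sigma(a_{2k}) = a_{2k-1}+N$, so each factor $(a_{2k-1}\ a_{2k})$ of $\rho$ contributes the paired transpositions $(a_{2k-1}\ a_{2k}+N)(a_{2k}\ a_{2k-1}+N)$ to $\sigma$.

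The hard part will be reconciling the ordering displayed in the proposition with the strict decreasing-leading-index convention of \eqref{eq:perm_form}, particularly in case (4) where the listing $(a_2\ a_1+N)(a_1\ a_2+N)(a_4\ a_3+N)(a_3\ a_4+N)\cdots$ groups factors in pairs rather than sorting globally. I plan to resolve this by verifying that all $N$ transpositions comprising $\sigma$ are pairwise disjoint: the $a_i$ partition $\{1,\ldots,N\}$ while the $a_i + N$ partition $\{N+1,\ldots,2N\}$, and each transposition contains exactly one representative from each set. Disjointness implies the factors commute, so the proposition's grouping and the canonical sorted \eqref{eq:perm_form}-arrangement represent the same permutation. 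The residual inequalities $a_{2k-1} < a_{2k}$ follow by inheritance from $\rho$ via the inductive hypothesis, and the observation $a_i + N > a_j$ whenever $a_i \ge 1$ and $a_j \le N$ guarantees the shifted transpositions sit to the left of the unshifted ones in the global \eqref{eq:perm_form}-ordering, closing the induction.
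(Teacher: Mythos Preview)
Your proposal is correct and mirrors the paper's own argument: induction on $n$ via the recursion $\mathcal P_{2N}^{(B)}=\mathcal P_2^{(B)}\otimes\mathcal P_N^{(B)}$, case analysis over the four Kronecker configurations, and the observation that the resulting $N$ transpositions are pairwise disjoint (hence commute) to reconcile the displayed ordering with the canonical form \eqref{eq:perm_form}. The only cosmetic differences are that the paper takes $n=2$ as its explicit base case (computing $\mathcal P_4^{(B)}$ by hand) and, in case (4), names the sorted relabeling via order statistics $b_{2k-1}=a_{(N-k+1)}$, $b_{2k}=\rho(a_{(N-k+1)})+N$ to verify $b_{2k+1}<b_{2k-1}<b_{2k}$ directly, whereas you appeal to disjointness more abstractly; the content is the same.
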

\begin{proof}
We will use induction on $n = \log_2 N$ along with the fact $\mathcal P_{2N}^{(B)} = \mathcal P_2^{(B)} \otimes \mathcal P_N^{(B)}$. For $n = 2$, starting with $\mathcal P_2^{(B)} = \mathcal P_2 = \{\V I_2, P_{(1 \ 2)}\}$ we have
\begin{align}
\mathcal P_4^{(B)} &= \mathcal P_2 \otimes \mathcal P_2 = \{\V I_2 \otimes \V I_2, P_{(1 \ 2)} \otimes I_2, \V I_2 \otimes P_{(1 \ 2)}, P_{(1 \ 2)} \otimes P_{(1 \ 2)}\}\\
&= \{\V I_4, P_{(2 \ 4)(1 \ 3)}, P_{(3 \ 4)(1 \ 2)},P_{(2\ 3)(1 \ 4)}\}.
\end{align}
The corresponding permutations as written above then all satisfy the form \eqref{eq:perm_form}, where we abstain from including the trivial permutations $(i_k \ k)$ when $i_k = k$. Moreover, writing each associated non-trivial permutation can be written as the product of $N=2$ disjoint transpositions of the form $(a_1 \ a_2)(a_3 \ a_4)$, which then have $a_1 > a_3$ and $a_{2k-1} < a_{2k}$ for $k = 1,2$. (Note the (distinct) transpositions used must necessarily be disjoint since necessarily $P_\sigma^2 = P_{\sigma^2} =  \V I$.)

Assume the result holds for $n$. The $n+1$ case follows by just reading off the corresponding permutations $\V I_2 \otimes P_\rho$ and $P_{(1\ 2)} \otimes P_\rho$ for $\rho = (a_1 \ a_2) \cdots (a_{N-1} \ a_N)$ such that $P_\rho \in \mathcal P_N^{(B)}$ (for $\rho$ already in form \eqref{eq:perm_form}). For $\rho = 1$, then $P_1 = \V I_N$ yields $\V I_2 \otimes \V I_N = \V I_{2N}$ and $P_{(1\ 2)} \otimes \V I_N = P_{(N \ 2N)\cdots(2 \ N+2)(1 \ N+1)}$; if $P_\rho \in \mathcal P_N^{(B)}$ for $\rho = (a_1 \ a_2)\cdots (a_{N-1} \ a_N) \in S_N$ where $a_{2k+1} < a_{2k-1}<a_{2k}$ for each $k$, then
\begin{equation}
\label{eq:above}
\begin{array}{l}
P_\sigma = \V I_2 \otimes P_\rho = P_{(a_1+N \ a_2 + N) \cdots (a_{N-1} + N \ a_N+N)(a_1 \ a_2) \cdots (a_{N-1} \ a_N)} \quad \mbox{and}\\
P_\sigma = P_{(1\ 2)} \otimes P_\rho = P_{(a_2 \ a_1+N)(a_1 \ a_2+N) \cdots (a_N \ a_{N-1} + N)(a_{N-1} \ a_N+N)}.
\end{array}
\end{equation}
Moreover, each associated permutation $\sigma$ in the form \eqref{eq:perm_form} then is either the trivial permutation or is the product of $N$ disjoint transpositions and can be written in the form $(b_1 \ b_2) \cdots (b_{2N-1} \ b_{2N})$ where $b_{2k+1} < b_{2k-1} < b_{2k}$ for each $k$. This holds directly by construction for the associated permutations for $\V I_2 \otimes \V I_N$, $P_{(1\ 2)} \otimes \V I_N$ and $\V I_2 \otimes P_\rho$, which follows since $a_k \le N$ along with $a_{2k+1}<a_{2k-1} < a_{2k}$ for all $k$. It remains to show $\sigma$ can be written in this form when $P_\sigma = P_{(1 \ 2)} \otimes P_\rho$. 

By \eqref{eq:above}, $\sigma = (a_2 \ a_1 + N)(a_1 \ a_2 + N) \cdots (a_N \ a_{N-1} + N)(a_{N-1} \ a_N + N)$. Note this is the product of $N$ disjoint transpositions again since $a_k \le N$ and $a_i \ne a_j$ for all $i \ne j$. Moreover, since the $a_j$ are distinct, we can label $b_{2k-1} = a_{(N-k+1)}$ for $k=1,2,\ldots,N$ using the ordered statistics subscript (so $a_{(1)} < a_{(2)} < \cdots < a_{(N)}$), with then $b_{2k} = \rho(a_{(N-k+1)}) + N$. We can then further write  $\sigma = (b_1 \ b_2) \cdots (b_{2N-1} \ b_{2N})$ since disjoint cycles commute, for which it then follows also $b_{2k+1} < b_{2k-1} < b_{2k}$ for each $k$.
\end{proof}

\begin{example}
\label{ex:butterfly perms}
The corresponding permutations are $\{1,(1 \ 2)\} = S_2$ for $\mathcal P_2^{(B)}$,  $\{$$1$, $(2 \ 4)(1 \ 3)$, $(3 \ 4)(1\ 2)$, $(2 \ 3)(1 \ 4)\}$ for $\mathcal P_4^{(B)}$, and
\begin{equation}
\label{eq:P_8}
    \left\{\begin{array}{l}1,(4 \ 8)(3 \ 7)(2 \ 6)(1 \ 5), \\
    (6 \ 8)(5 \ 7)(2 \ 4)(1 \ 3), (4 \ 6)(3 \ 5)(2\ 8)(1 \ 7), \\
    (7\ 8)(5\ 6)(3\ 4)(1 \ 2), (4 \ 7)(3 \ 8)(2 \ 5)(1\ 6), \\
    (6 \ 7)(5 \ 8)(2 \ 3)(1 \ 4), (4 \ 5)(3 \ 6)(2 \ 7)(1 \ 8) \end{array} \right\}
\end{equation}
for $\mathcal P_8^{(B)}$.
\end{example}

\begin{remark}
If no pivot movements are needed on the first GE step, then no pivot movements will be needed at any GE step (using \eqref{eq:perm_form} and \Cref{prop: butterfly perm form}). Furthermore, it follows inductively that half the time all of the pivot movements occur in the first $N/2$ steps; a quarter of the time all of the pivot movements occur as the first $N/4$ steps of each $N/2$ partitioning; an eighth of the time all of the pivots occur at the first $N/8$ steps of each $N/4$ partitioning; and so on, where $2^{-k}$ of the time the pivots occur at each step in the first half of each $N2^{1-k}$ partitioning of the GE steps, which stops with exactly one permutation (i.e., $2^{-n}=\frac1N$ of the time) does pivoting occur at precisely every other GE step. The remaining permutation accounts for no pivoting ever occurring when $P_{\boldsymbol \t} = \V I$. This yields a Cantor set-like decomposition of $[n]$ into the possible configuration of locations of where the pivots can occur using GEPP. To find out which configuration will be used on a particular simple scalar butterfly matrix (i.e., the exact pivoting locations one will encounter), this is determined by the first step where $k = i_k$. 

For example, using the associated permutations from $\mathcal P_8^{(B)}$ from \Cref{ex:butterfly perms}, we see 4 permutations have pivot movements only at the first half of the total GE steps (i.e., $(4\ 8)(3 \ 7)(2 \ 6),(1 \ 5)$, $(4 \ 6)(3 \ 5)(2 \ 8)(1 \ 7)$, $(4 \ 7)(3 \ 8)(2 \ 5)(1 \ 6)$, and $(4 \ 5)(3 \ 6)(2 \ 7)(1\ 8)$ each yield pivot movements at GE steps 1 through 4); 2 permutations have pivot movements only at the first half of each half partitioning of the total GE steps (i.e, $(6 \ 8)(5 \ 7)(2 \ 4)(1 \ 3)$ and $(6 \ 7)(5 \ 8)(2 \ 3)(1 \ 4)$ yield pivot movements only at steps 1,2 and 5,6); 1 partition has pivot movements at every other GE step (i.e., $(7 \ 8)(5 \ 6)(3 \ 4)(1 \ 2)$ yields pivot movements only at steps 1,3,5,7); with only one permutation (the trivial permutation) having no GE pivot movements.

\begin{figure}[htbp]
  \centering
  \includegraphics[width=0.8\textwidth]{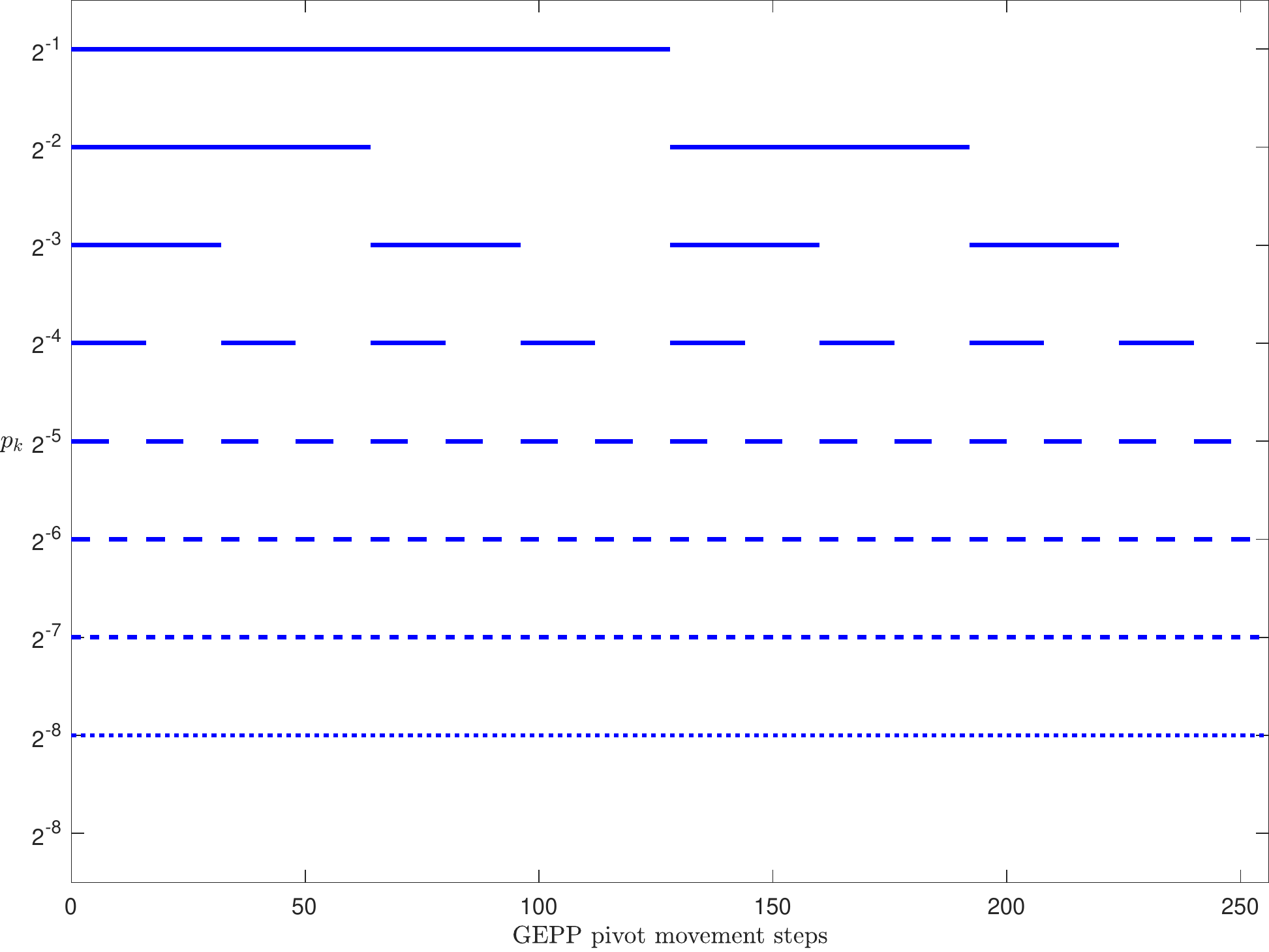}
  \caption{GEPP pivot movement configurations for Haar-butterfly permutations for $N=2^8$ and their associated probabilities, $p_k$, with the exact pivot movement locations indicated by blue}
  \label{fig:configs}
\end{figure}

 Moreover, applying GEPP to Haar-butterfly matrices, where then each butterfly permutation occurs with equal probability, then induces a probability measure on these configurations. \Cref{fig:configs} shows the possible GEPP pivot movement locations associated with Haar-butterfly permutations of size $N=2^8$, along with the probability for each particular configuration.

\end{remark}

\subsection{Proof of (II) of \Cref{thm:main}}
Now we have the sufficient tools to establish:
\begin{quotation}\normalsize
\noindent\textsc{\Cref{thm:main}}: (II) \emph{If $B \sim \B_s(N,\Sigma_S)$, then $P \sim \Uniform(\mathcal P_N^{(B)})$ for $PB = LU$ the GEPP factorization and ${\Pi(B)} \sim \frac{N}2 \Bernoulli\left(1-\frac1N\right)$.}
\end{quotation}
\begin{proof}
\Cref{cor: haar perm butterfly} yields directly that applying GEPP to a Haar-butterfly matrix results in a uniform butterfly permutation matrix factor, which is the first part of (II). Moreover, using the associated permutations from GEPP in the form \eqref{eq:perm_form} to then be able to read off explicitly which GE steps need pivot movements, then we have by \Cref{prop: butterfly perm form} that $i_k > k$ precisely for $N/2$ indices $k$ for each non-trivial case and for precisely 0 times in the trivial case. It follows then for $B = B(\boldsymbol\t) \sim \B_s(N,\Sigma_S)$, then since $P_{\boldsymbol \t} \sim \Uniform(\mathcal P_N^{(B)})$ we have
\begin{align}
\P({\Pi(B)} = 0) &= \P(P_{\boldsymbol \t} = \V I_N) = 2^{-n} = \frac1N, \mbox{ and}\\
\P\left({\Pi(B)} = \frac{N}2\right) &= \P(P_{\boldsymbol \t} \ne \V I_N) = 1 - \P(P_{\boldsymbol \t} = \V I_N) = 1-\frac1N.
\end{align}
Hence, ${\Pi(B)} \sim \frac{N}2 \Bernoulli(1 - \frac1N)$.
\end{proof}


\section{Random ensembles  \texorpdfstring{{\boldmath$\mathcal{PL}_n^{\max}(\xi)$}}{2}, \texorpdfstring{{\boldmath$\mathcal{PL}_n(\xi)$}}{2}, \texorpdfstring{{\boldmath$\mathcal{PL}_n^{\max}(\xi,\alpha)$}}{2} and \texorpdfstring{{\boldmath$\mathcal{PL}_n(\xi,\alpha)$}}{2}}
\label{sec:new}

Using the prior discussions, we can build a random ensemble of matrices that require a maximal number of GEPP pivot movements. As mentioned in \Cref{rmk:max pivot}, a maximal number of GEPP pivot movements occurs when the corresponding permutation matrix factor $P \in \mathcal P_n^{n\operatorname{-cycles}}$. Using this, we can define
\begin{equation}
\label{eq: max pivot random ensemble}
\mathcal{PL}_n^{\max}(\xi) = \{PL: P \sim \Uniform(\mathcal P_n^{n\operatorname{-cycles}}), L \in \mathcal L_n \mbox{ independent of $P$}, L_{ij} \sim \xi \mbox{ iid for  $i>j$}\}.
\end{equation}
Recall, by construction, the corresponding GEPP lower triangular matrix factor $L$ satisfies  $|L_{ij}| \le 1$ for all $i > j$. Moreover, \Cref{thm:GEPP_unique} yields the $L$ factor is invariant under row permutations if $|L_{ij}| < 1$ for all $i > j$. Hence, if $A = PLU$ where $U \in \mathcal U_n$ and {$PL \sim \mathcal{PL}_n^{\max}(\xi)$} for any distribution $\xi$ with $|\xi| < 1$, then $A$ will always require $n-1$ GEPP pivot movements. Similar ensembles can be constructed where the $P$ factor is restricted to particular pivot configurations. 

We will also study the more general model
\begin{equation}
\label{eq: gen random ensemble}
\mathcal{PL}_n(\xi) = \{PL: P \sim \Uniform(\mathcal P_n), L \in \mathcal L_n \mbox{ independent of $P$}, L_{ij} \sim \xi \mbox{ iid for $i>j$}\}.
\end{equation}
When $|\xi| < 1$, then $A=PLU$ for $PL \sim \mathcal{PL}_n(\xi)$ and $U \in \mathcal U_n$ corresponds to the GEPP $LU$ factorization of $A$.

\begin{remark}
Two natural distributions to consider are $\xi \sim \Uniform([-1,1])$ and $\xi \sim \Uniform(\mathbb D)$. Using GEPP on $\GL_n(\mathbb F)$, the left-coset representatives of $\mathcal U_n(\mathbb F)$ in $\GL_n(\mathbb F)$, $\GL_n(\mathbb F)/\mathcal U_n(\mathbb F)$, are precisely of the form $PL$ for $P \in \mathcal P_n$ and $L \in \mathcal L_n$ where $|L_{ij}|\le 1$ for all $i > j$. Hence, $\mathcal PL_n(\xi)$ then corresponds to uniformly sampled representatives of $\GL_n(\mathbb R)/\mathcal U_n(\mathbb R)$ when $\xi \sim \Uniform([-1,1])$ and uniformly sampled representatives of $\GL_n(\mathbb C)/\mathcal U_n(\mathbb C)$ when $\xi \sim \Uniform(\mathbb D)$.
\end{remark}



\subsection{Eigenvalues of \texorpdfstring{{\boldmath$\mathcal{PL}_n^{\max}(\xi)$}}{2} and \texorpdfstring{{\boldmath$\mathcal{PL}_n(\xi)$}}{2}}
Suppose $PL \sim \mathcal{PL}_n^{\max}(\xi)$. Since $L \in \mathcal L_n$, then its eigenvalues are exactly 1 with multiplicity $n$. Since $P \in \mathcal P_n^{n\operatorname{-cycles}}$, then its eigenvalues are exactly the $n^{th}$ roots of unity, $e^{2\pi i/n}$. The spectral pictures for each $P$ and $L$ separately fall exactly on $\partial \mathbb D$, and these are deterministic despite each matrix being random. 

So a natural next question is what does the spectral picture look like for their product, $PL$? The eigenvalues no longer stay on $\partial \mathbb D$, but they appear to remain asymptotically concentrated inside $\mathbb D$ when scaled by $\sqrt{n \sigma^2/2}$ when $\mathbb E \xi = 0$ (i.e., $\xi$ is centered) and $\sigma^2 = \mathbb E |\xi|^2$ is the variance of $\xi$. \Cref{fig:PL eig} shows the (computed) eigenvalue locations for $\mathcal{PL}_n^{\max}(\xi)$ scaled by $\sqrt{n \sigma^2/2}$ using $n=2^{14}=16,384$ and $\xi$ sampled from $\Uniform([-1,1])$, $\Uniform(\mathbb D)$, Rademacher and $N(0,1)$. Noticeably, \Cref{fig:PL eig} further suggests a universality result for this limiting behavior.

Recall the empirical spectral distribution (ESD) of a $n\times n$ matrix $A$ is the probability measure
\begin{equation}
\label{eq:esd}
\mu_A = \frac1n \sum_{k=1}^n \delta_{\lambda_k(A)},
\end{equation}
which gives equal weight to the location of each eigenvalue of $A$. Note if $A$ is a random matrix, then $\mu_A$ is a random probability measure. 

Empirically, \Cref{fig:PL eig} then suggests $\mu_{A}$ is converging to a probability measure on $\mathbb D$ that is an interpolation between the uniform measure on $\mathbb D$ and the Dirac measure at the origin when $A = PL/\sqrt{n \sigma^2/2}$ for $PL \sim \mathcal{PL}_n^{{\max}}(\xi)$ with $\xi$ having 0 mean and finite variance. Although the eigenvalues of $A$ have a higher density near the origin, they can never include the origin since $PL$ is nonsingular.

\begin{figure}[htp] 
    \centering
    \subfloat[$\xi \sim \Uniform(\mbox{[$-1,1$]})$]{%
        \includegraphics[width=0.52\textwidth]{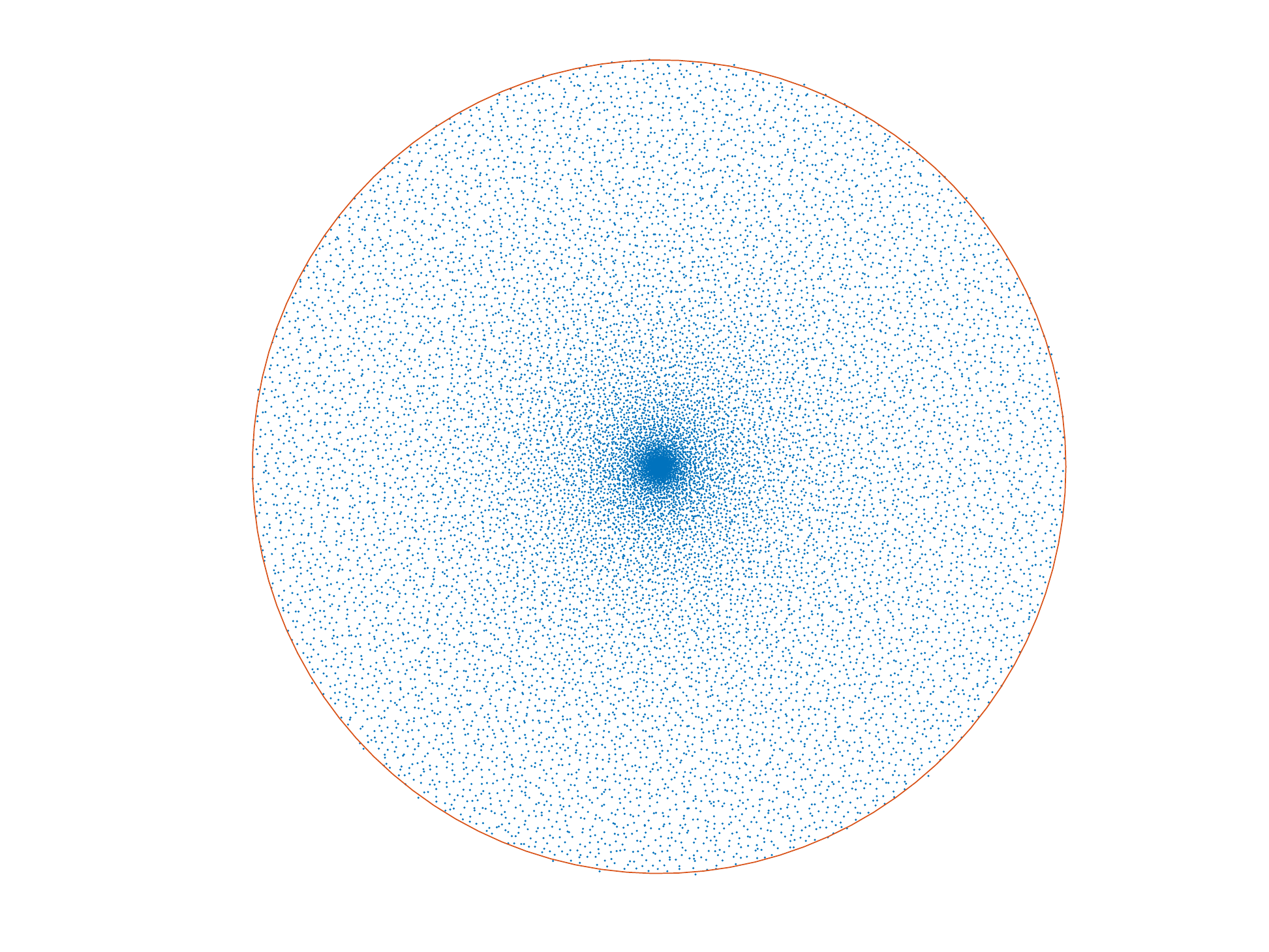}%
        \label{fig:PL real}%
        }%
    \hspace{-2pc}%
    \subfloat[$\xi \sim \Uniform(\mathbb D)$]{%
        \includegraphics[width=0.52\textwidth]{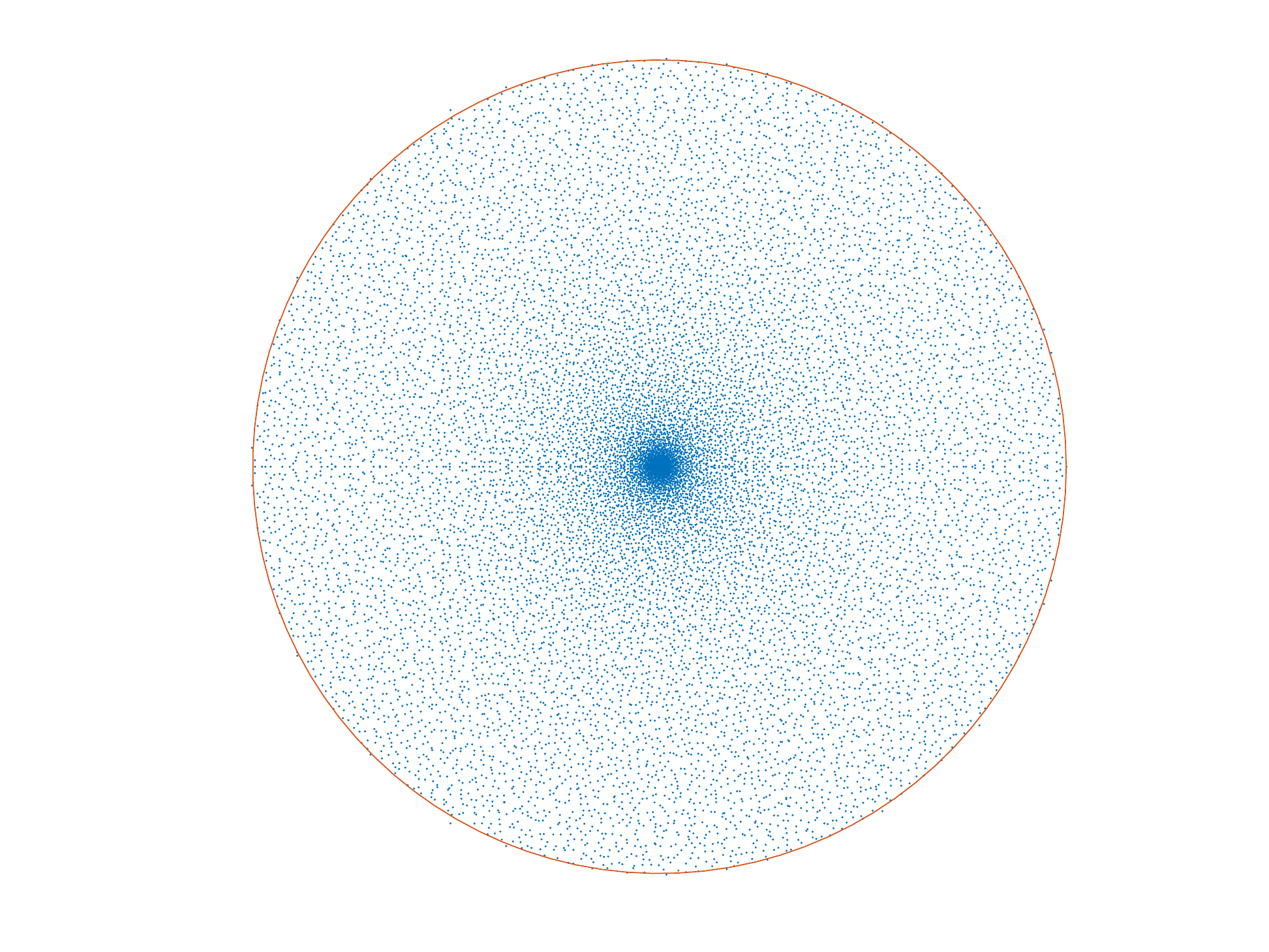}%
        \label{fig:PL complex}%
        }%
    \\
    \subfloat[$\xi \sim \operatorname{Rademacher}$]{%
        \includegraphics[width=0.52\textwidth]{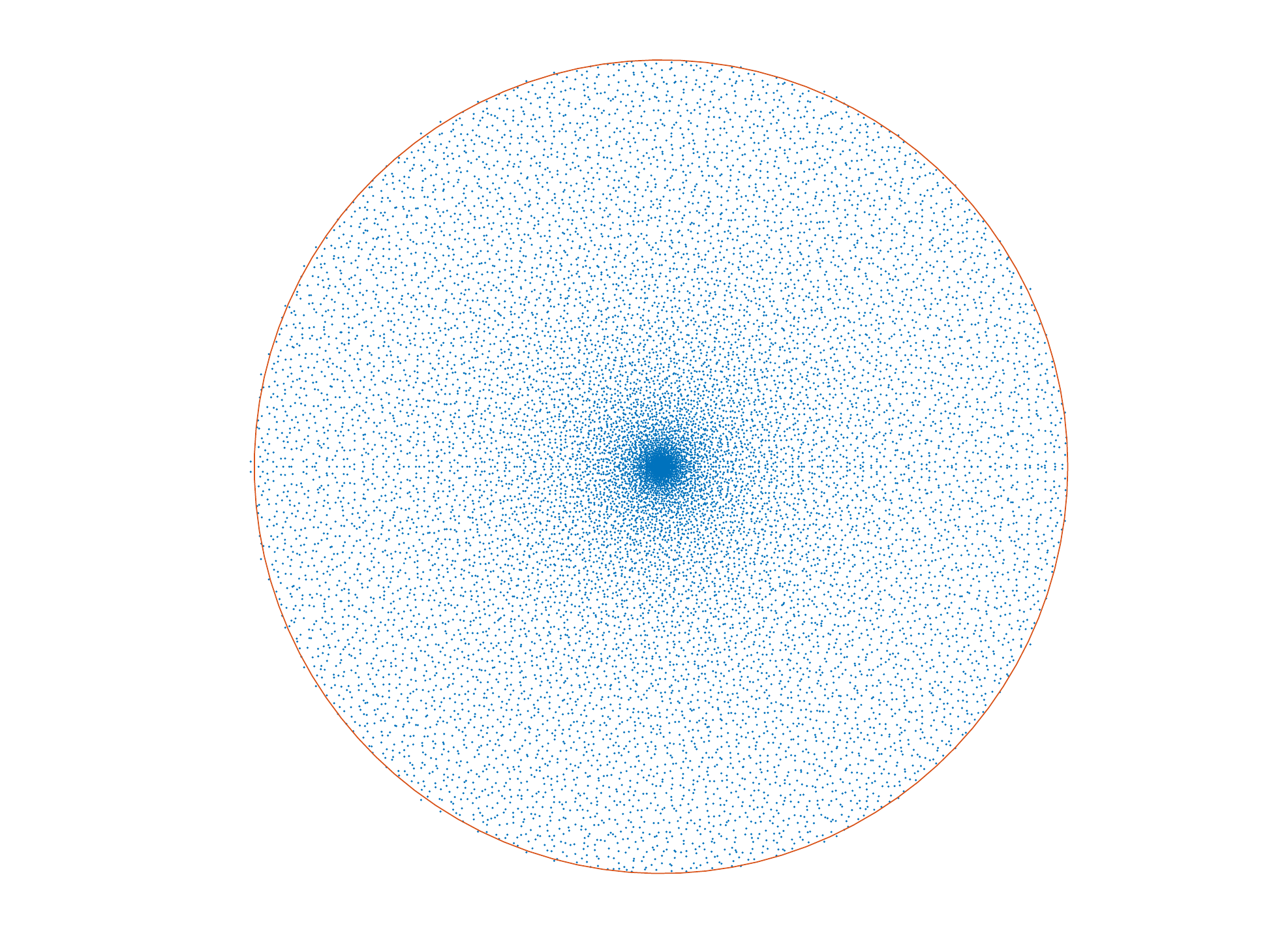}%
        \label{fig:PL Rademacher}%
        }%
    \hspace{-2pc}%
    \subfloat[$\xi \sim N(0,1)$]{%
        \includegraphics[width=0.52\textwidth]{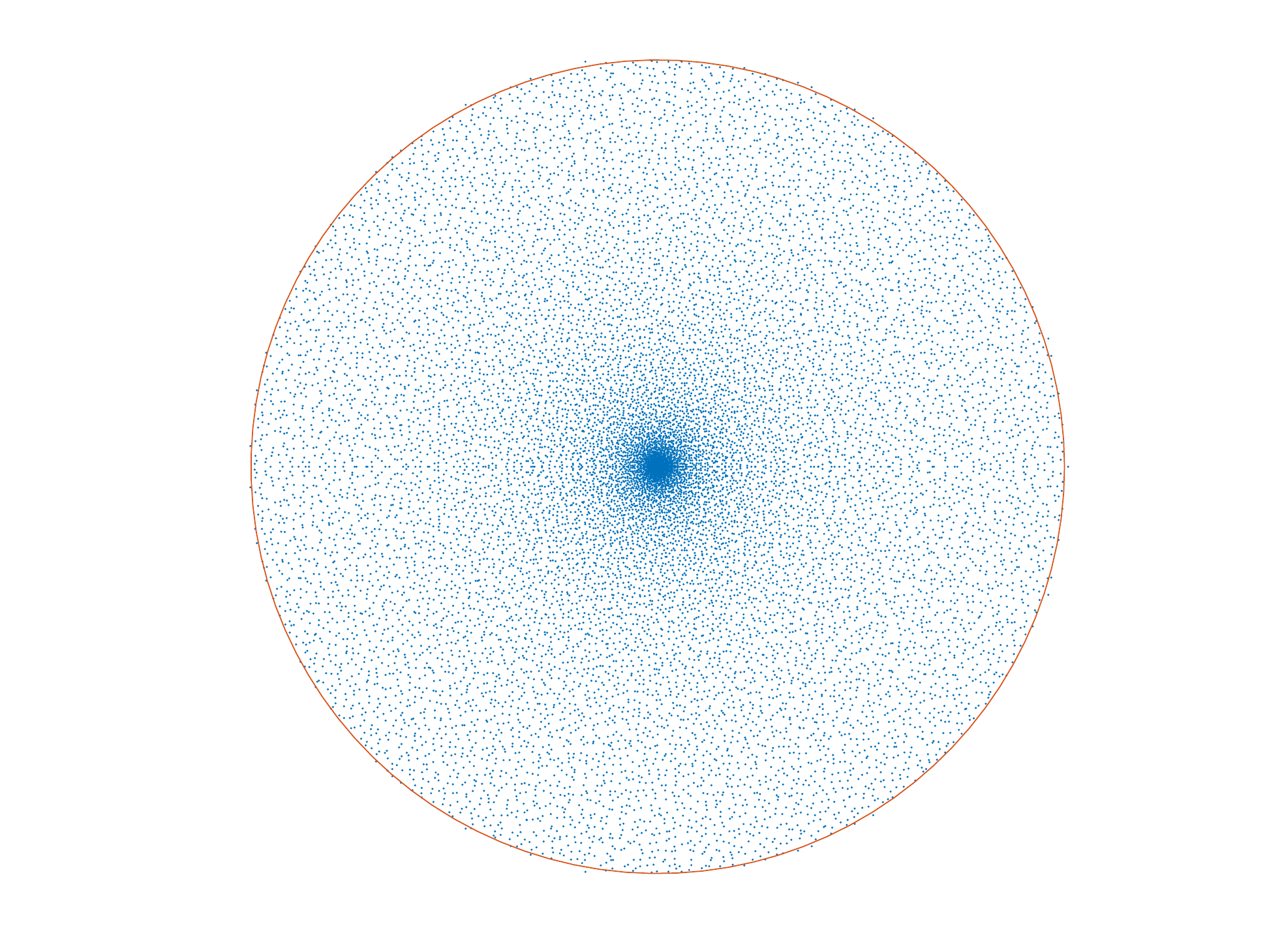}%
        \label{fig:PL norm}%
        }%
    \caption{Computed eigenvalues (in blue) for $PL/\sqrt{n \sigma^2/2}$ where $n = 2^{14}=16,384$ and $PL \sim {\mathcal{PL}}_n^{\max}(\xi)$ for (a) $\xi \sim \Uniform([-1,1])$ where $\sigma^2 = \frac13$, (b) $\xi \sim \Uniform(\mathbb D)$ where $\sigma^2 = \frac12$, (c) $\xi \sim \operatorname{Rademacher}$ where $\sigma^2 = 1$, and (d) $\xi \sim N(0,1)$ where $\sigma^2 = 1$, mapped against the unit complex circle $\partial \mathbb D$ (in red)}
    \label{fig:PL eig}
    \end{figure}


The pictures are indistinguishable to \Cref{fig:PL eig} when instead using $\mathcal{PL}_n(\xi)$, where $P \sim \Uniform(\mathcal P_n)$ instead of $P \sim \Uniform(\mathcal P_n^{n\operatorname{-cycles}})$. In both cases, the ESD of $P$ limit to the uniform measure on {$\partial \mathbb D$} in probability in the weak star sense \cite{DiSh94}. However, replacing $P$ with another random matrix from an ensemble whose ESDs similarly limit to the uniform measure on {$\partial  \mathbb D$} (e.g., using Haar sampling for $\O(n)$, $\U(n)$, or $\B_s(n)$ \cite{DiSh94,Tr19}) yield different spectral pictures that extend beyond $\mathbb D$ when still using the scaling $\sqrt{n \sigma^2/2}$. This points to the significance of the divisor of $2$ in the above scaling, which corresponds to the {sparsity (or density)} of the $L$ factor of $\frac12$; for example, $UL$ has full density a.s. when $U \sim \Haar(\O(n))$ or $U \sim \B_s(N,\Sigma_S)$. 

Since multiplication by permutation matrices preserves {sparsity}, one might expect the same scaling when uniformly sampling permutation matrices versus corresponding $n$-cycle permutation matrices. Both should adequately mix up the rows of $L$ so that the eigenvalues move away from 1. However, preserving {sparsity} is not sufficient on its own, as can be seen if using a diagonal matrix $D$, since then $DL$ will only have eigenvalues located at the diagonal entries of $D$ (since $L$ is  unipotent lower triangular). A future area of research can study a more general random ensemble that could replace $P$ in $PL \sim \mathcal{PL}_n(\xi)$ with another random matrix that  sufficiently randomizes the rows of $L$ without changing the sparsity of $L$.

\subsection{Fixed sparsity random ensembles \texorpdfstring{{\boldmath$\mathcal{PL}_n^{\max}(\xi,\alpha)$}}{2} and \texorpdfstring{{\boldmath$\mathcal{PL}_n(\xi,\alpha)$}}{2}}

We can similarly study random ensembles with (approximately) fixed sparsity. Let the sparsity of a matrix $A$ be determined by $\alpha=\alpha(A)$, the ratio of the number of zero entries of $A$ to the total number of entries of $A$. If $\alpha = 0$, then $A$ has full density, and if $\alpha = 1$ then $A$ is the zero matrix. A full lower triangular matrix has {sparsity} given by {$\alpha = 1-\binom{n}2/n^2=\frac12(1+\frac1n) \approx \frac12$} for large $n$. We can then construct a matrix with fixed (approximate) sparsity by zeroing out all entries above a  specific diagonal. For a $n \times n$ matrix that has zero entries only above a set diagonal $k \in [-n,n]$ with entries at indices $(i,i+k)$ (where $k=0$ is the main diagonal, $k>0$ is the super diagonal, and $k<0$ are the sub diagonals), one can determine the density by computing:
\begin{equation}
   { g_n(k) = \left\{\begin{array}{cl}
    \displaystyle\frac{(n+k)(n+k+1)}{2n^2}, & k < 0,\\
    \vspace{-.5pc}\\
    \displaystyle\frac12\left(1 - \frac1n\right), & k = 0,\\
    \vspace{-.5pc}\\
    \displaystyle1 - \frac{(n-k)(n-k+1)}{2n^2}, & k > 0.
    \end{array}\right.}
\end{equation}
This is the ratio of nonzero entries to the total number of matrix entries for a matrix that is zero above and full at or below the $k^{th}$ diagonal; the triangular numbers naturally show up for the numerator in the {each} term. {Note $g_n(k) < \frac12$ iff $k < 0$ and $g_n(k) > \frac12$ iff $k > 0$. Moreover,} $g_n(k)$ is a quadratic polynomial in $k$ for fixed $n$ {when $k < 0$ or $k > 0$}, so $g_n(k)$ can be extended to include non-integral $k$. In particular, one could uniquely solve for 
\begin{equation}
k_\alpha \in [-n,n] \quad \mbox{such that} \quad g_n(k_\alpha)={1-\alpha}.\footnote{{If $\alpha > \frac12$, $k_\alpha = -n + \frac12[-1 + \sqrt{1 + 8n^2(1-\alpha)}] \in \left[-n,-\frac12\right)$; if $\alpha = \frac12$, set $k_{1/2} = 0$; if $\alpha < \frac12$, $k_\alpha = n + \frac12[1 - \sqrt{1 + 8n^2 \alpha}] \in \left(\frac12,n\right]$.}}
\end{equation}
Using this, we can introduce another random ensemble
\begin{equation}
\label{eq: PL xi alpha}
\mathcal{PL}_n(\xi,\alpha) = \left\{PL: P \sim \Uniform(\mathcal P_n), L \mbox{ independent of $P$}, 
\begin{array}{ll}
L_{ij} = 0 &\mbox{if $i + \lfloor k_\alpha \rfloor \le j$,}\\ 
L_{ij} \sim \xi \mbox{ iid} & \mbox{if $i+\lfloor k_\alpha \rfloor>j$}
\end{array} 
\right\}
\end{equation}
We can similarly define $\mathcal{PL}_n^{\max}(\xi,\alpha)$ by requiring $P \sim \Uniform(\mathcal P_n^{n\operatorname{-cycles}})$ (as well as other ensembles with fixed pivot configurations). Note if $PL \sim \mathcal{PL}_n(\xi,\frac12)$ then $P(L+\V I_n) \sim \mathcal{PL}_n(\xi)$.

Known results for asymptotic limiting behavior of ESDs to probability measures on $\mathbb D$ include the famous Circular law introduced by Bai in \cite{Bai} and proven with a fourth moment condition by Tao and Vu \cite{TaoVu}.

\begin{theorem}[Circular law \cite{TaoVu}]
\label{thm:circular}
Let $A$ be a $n \times n$ matrix with iid entries sampled from $\xi$ where $\E \xi = 0$, $\sigma^2 = \E |\xi|^2$ and $\E |\xi|^4 < \infty$. Then $\mu_{A/\sqrt{n \sigma^2}}$ converges weakly to the uniform measure on $\mathbb D$ in probability and almost surely.
\end{theorem}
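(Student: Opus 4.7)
The plan is to follow Girko's Hermitization technique, which reduces the study of eigenvalues of the non-Hermitian matrix $A_n := A/\sqrt{n\sigma^2}$ to the study of singular values of the shifted matrices $A_n - z\V I$ for $z \in \mathbb C$. The starting point is the logarithmic potential representation
\[
U_{\mu_{A_n}}(z) \;=\; -\int_{\mathbb C} \log|\lambda - z| \, d\mu_{A_n}(\lambda) \;=\; -\frac{1}{n} \log|\det(A_n - z\V I)| \;=\; -\frac{1}{n}\sum_{k=1}^n \log \sigma_k(A_n - z\V I),
\]
where $\sigma_1(M) \ge \cdots \ge \sigma_n(M)$ denote the singular values. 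Identifying the limiting ESD of $A_n$ then amounts to identifying the limit of $\frac1n \sum_k \log \sigma_k(A_n - z\V I)$ for Lebesgue-almost every $z$ and inverting the logarithmic potential.

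The first step is to identify the weak limit of the singular value measure of $A_n - z\V I$. Via Marchenko--Pastur theory applied to the Hermitian dilation of $A_n - z\V I$ (equivalently, via a fixed-point equation for the Stieltjes transform of the symmetrized spectrum), one shows that the empirical singular value measure $\nu_{n,z} := \frac1n \sum_k \delta_{\sigma_k(A_n - z\V I)}$ converges in probability to an explicit deterministic measure $\nu_z$ on $[0,\infty)$ depending only on $|z|$. The finite variance hypothesis together with standard resolvent/concentration machinery suffices for this piece; the fourth moment assumption primarily suppresses fluctuations and rules out heavy-tailed pathologies.

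The second and decisive step is to pass to the limit inside $\int \log x \, d\nu_{n,z}(x)$, which requires uniform integrability of $\log x$ near $x = 0$. Equivalently, one must forbid $A_n - z\V I$ from possessing pathologically small singular values. The key quantitative ingredient is a polynomial lower tail bound
\[
\P\bigl(\sigma_n(A_n - z\V I) \le n^{-C}\bigr) = o(1),
\]
accompanied by control on intermediate small singular values $\sigma_{n-k}(A_n - z\V I)$ for $k$ not too large (e.g.\ $\sigma_{n-k} \gtrsim k/n$ with high probability). This is the least-singular-value problem solved by Tao and Vu, relying on Rudelson--Vershynin-style small-ball (Littlewood--Offord) inequalities applied to random linear combinations, together with a geometric decomposition of the unit sphere into compressible versus incompressible approximate null vectors. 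The fourth moment hypothesis enters delicately here, through a truncation-and-centering argument that reduces to a quasi-subgaussian setting while keeping quantitative control of the truncation error. This is the main obstacle of the whole argument.

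Once weak convergence $\nu_{n,z} \to \nu_z$ and log-integrability near zero are both in hand, one obtains $U_{\mu_{A_n}}(z) \to U(z) := -\int_0^\infty \log x \, d\nu_z(x)$ in probability for almost every $z$. A direct Stieltjes-transform computation yields
\[
U(z) = \begin{cases} -\log|z|, & |z| > 1, \\ \tfrac12(1 - |z|^2), & |z| \le 1, \end{cases}
\]
which is precisely the logarithmic potential of the uniform measure on $\mathbb D$. Since the logarithmic potential determines a compactly supported probability measure uniquely, weak convergence of $\mu_{A_n}$ to $\Uniform(\mathbb D)$ in probability follows. The almost sure upgrade is then obtained by a routine second-moment plus Borel--Cantelli argument, combined with a net over a dense countable set of test points $z$ and a Lipschitz/equicontinuity estimate for $U_{\mu_{A_n}}$ off a small exceptional set.
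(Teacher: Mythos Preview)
The paper does not give its own proof of this statement: Theorem~\ref{thm:circular} is quoted from \cite{TaoVu} purely as background for Conjecture~\ref{conj}, and no argument for it appears anywhere in the paper. There is therefore nothing to compare your proposal against within the paper itself.

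That said, your outline is a faithful high-level summary of the Girko Hermitization strategy that Tao and Vu (and Bai before them) actually use: the logarithmic potential identity, convergence of the singular value law of $A_n - z\V I$ to an explicit $\nu_z$, the least-singular-value bound to control $\log$ near zero, and the identification of the limiting potential with that of $\Uniform(\mathbb D)$. The displayed formula for $U(z)$ is correct. Two minor caveats: your description of the almost-sure upgrade (``second-moment plus Borel--Cantelli, net over $z$, Lipschitz estimate'') is more heuristic than what is done in the literature, where one typically appeals to concentration for the Stieltjes transform of the Hermitized matrix; and the role you assign to the fourth-moment hypothesis is slightly overstated, since later work of Tao and Vu removed it down to finite variance. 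But as a sketch of the cited result, your proposal is sound.
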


\Cref{thm:circular} yields the asymptotic spectral picture for $\mathcal{PL}_n(\xi,{0})$ {(i.e., with sparsity 0)} when $\xi$ is centered with finite fourth moment. \Cref{fig:Circular} shows the map of the computed eigenvalues $\mathcal{PL}_n(N(0,1),{0}) \sim \operatorname{Gin}(n,n)$ for $n = 2^{14}$, while \Cref{fig:n Uniform D points} plots $n$ random sample points from $\Uniform(\mathbb D)$. Comparing this to the Ginibre picture (i.e., \Cref{fig:Circular}) also exemplifies how the asymptotic result does not hold for fixed $n$. The Ginibre picture has repulsions between its eigenvalues that lead to points being similarly spaced, while the asymptotic endpoint (i.e., \Cref{fig:n Uniform D points}) has Poisson clumping.

\begin{figure}[htp] 
    \centering
    \subfloat[$\xi \sim N(0,1), {\alpha = 0}$]{%
        \includegraphics[width=0.52\textwidth]{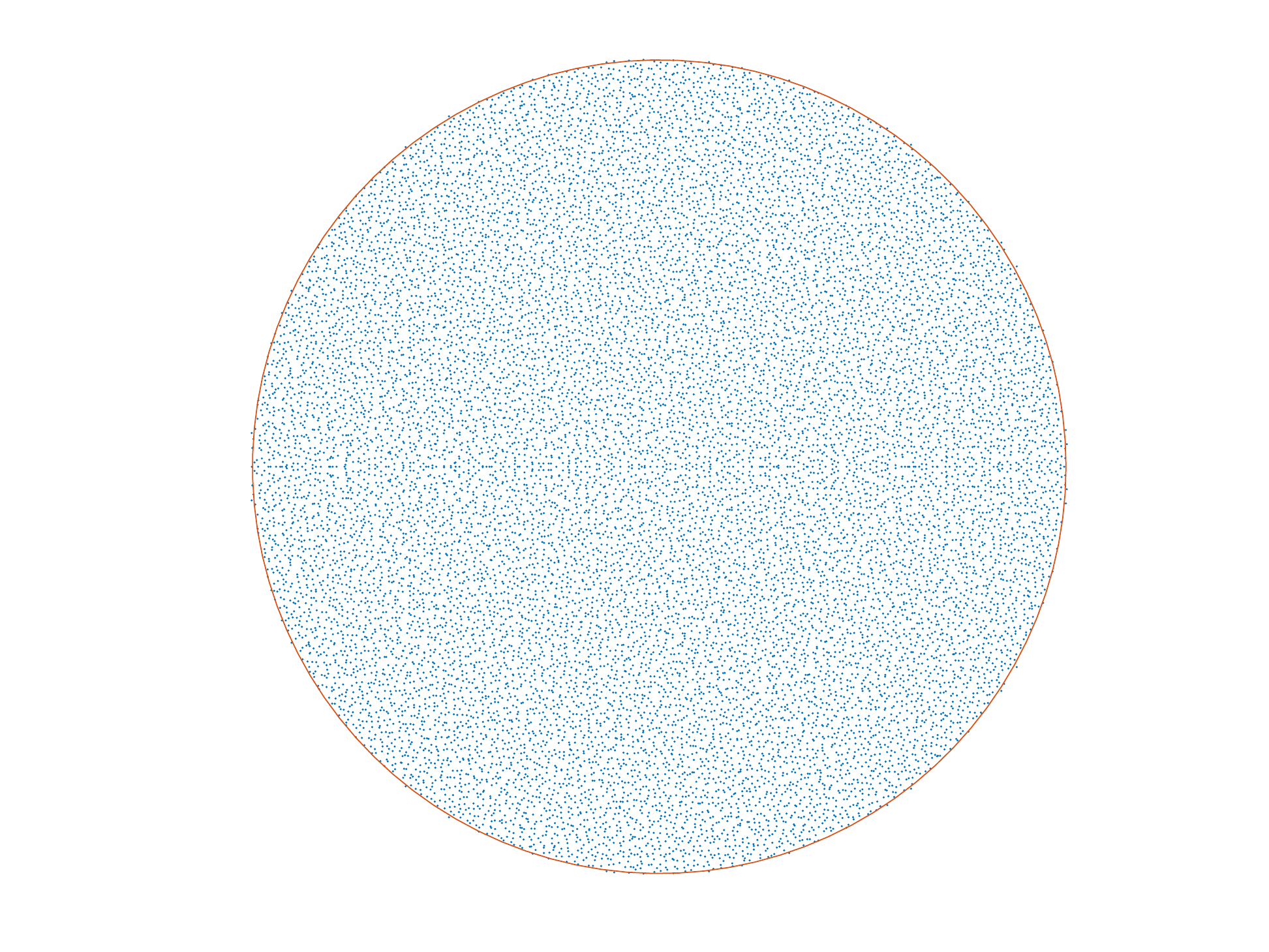}%
        \label{fig:Circular}%
        }%
    \hspace{-2pc}%
    \subfloat[$n=2^{14}$ iid samples from $\Uniform(\mathbb D)$]{%
        \includegraphics[width=0.52\textwidth]{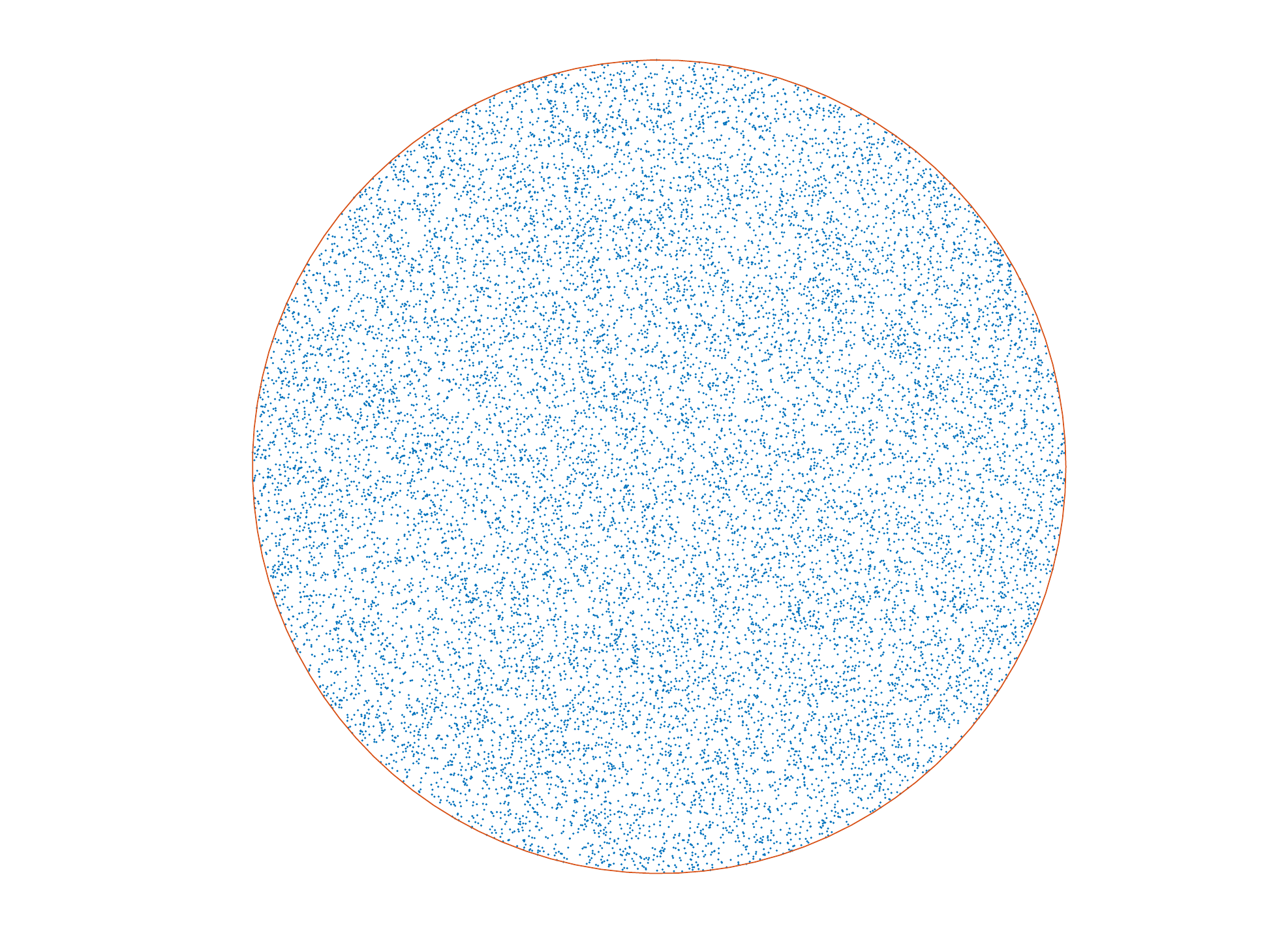}%
        \label{fig:n Uniform D points}%
        }%
    \\
    \subfloat[$\xi \sim N(0,1), {\alpha = \frac14}$]{%
        \includegraphics[width=0.52\textwidth]{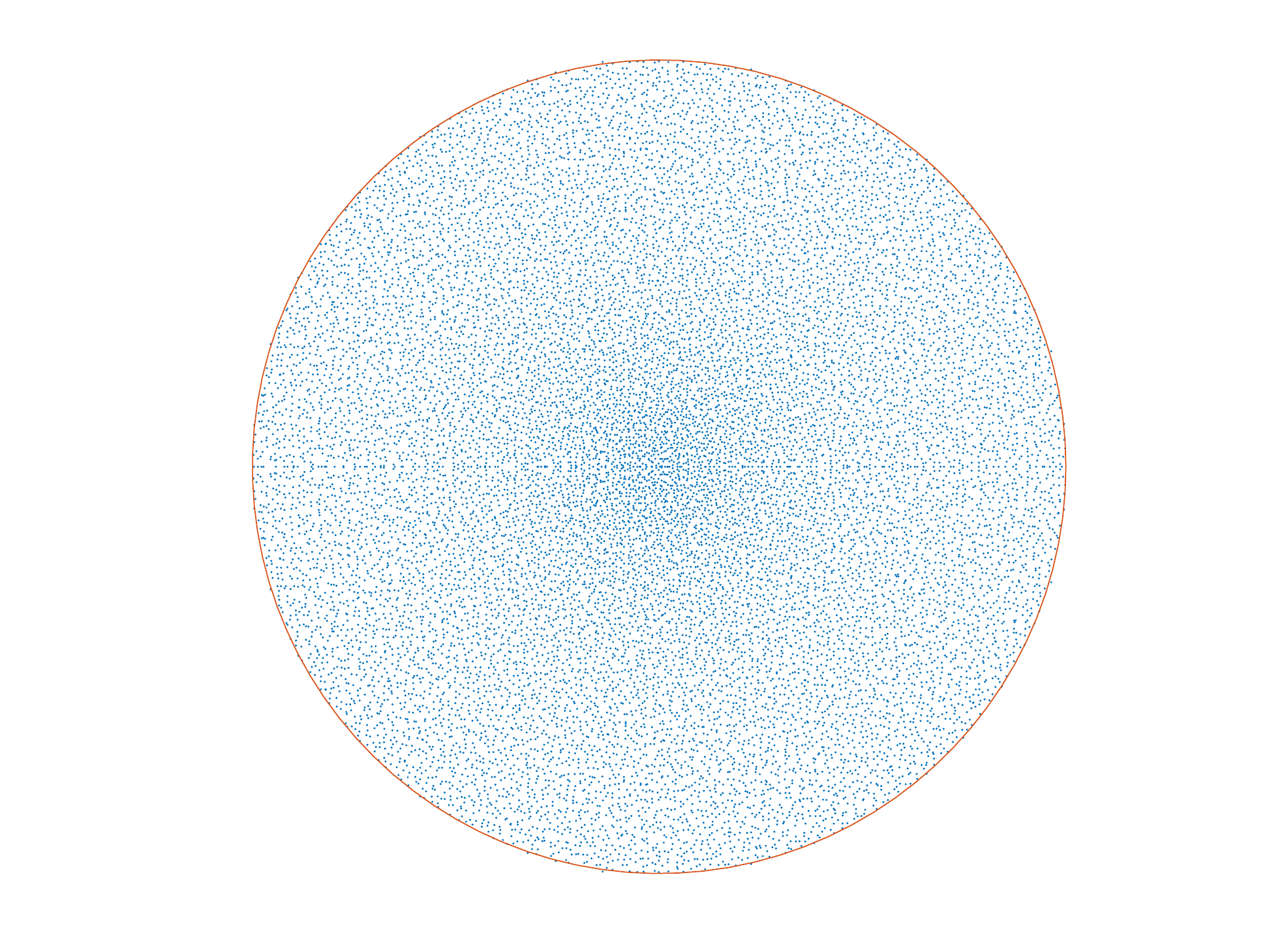}%
        \label{fig:a=3/4}%
        }%
    \hspace{-2pc}%
    \subfloat[$\xi \sim N(0,1), {\alpha = \frac34}$]{%
        \includegraphics[width=0.52\textwidth]{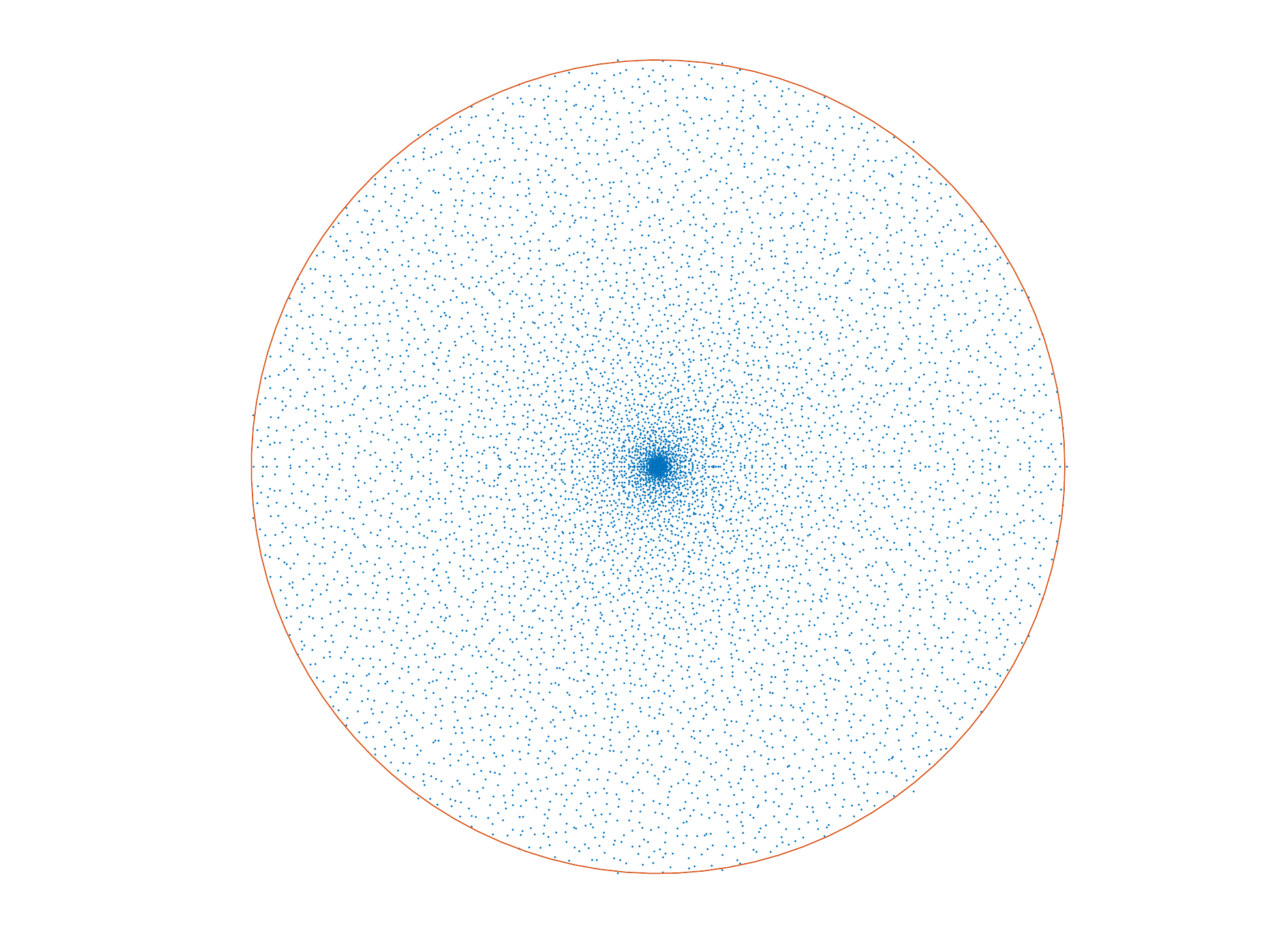}%
        \label{fig:a=1/4}%
        }%
    \caption{Computed eigenvalues (in blue) for $PL/\sqrt{ n \sigma^2{(1-\alpha)} }$ where $n = 2^{14}=16,384$ and $PL \sim PL_n(\xi,\alpha)$ for $\xi \sim N(0,1)$ (where $\sigma^2=1$) and {(a) $\alpha = 0$, (c) $\alpha = 1/4$, and (d) $\alpha=3/4$}, along with (b) $n$ iid samples from $\Uniform(\mathbb D)$, mapped against the unit complex circle $\partial \mathbb D$ (in red)}
    \label{fig:PL fixed sparsity}
    \end{figure}

Using $A \sim \mathcal{PL}_n(\xi,\alpha)$ for fixed {$\alpha \in [0,1)$} and $\xi \sim N(0,1)$, empirical data suggest a similar asymptotic result for the ESD of $A/\sqrt{ n \sigma^2{(1-\alpha)}}$. Note the scaling matches that of \Cref{thm:circular} where {$\alpha = 0$} as well as that seen in \Cref{fig:PL eig} where $\alpha = \frac12$. In particular, following the trajectory for {$\alpha = 0$ in \Cref{fig:Circular}, $\alpha = \frac14$ in \Cref{fig:a=1/4}}, $\alpha = \frac12$ in \Cref{fig:PL eig} (recall $P \sim \Uniform(\mathcal P_n)$ and $P \sim \Uniform(\mathcal P_n^{n\operatorname{-cycles}})$ empirically result in indistinguishable spectral pictures), and {$\alpha = \frac34$} in \Cref{fig:a=3/4}, suggests the scaling by $\sqrt{ n \sigma^2 {(1-\alpha)}}$ have the corresponding ESDs limit to $\nu_\alpha$, a fixed probability measure with support on $\mathbb D$ that depends on $\alpha$ (and not on $\xi$), which further converge to $\nu$, the uniform measure on $\mathbb D$, as {$\alpha \to 0$} and converge to $\delta_0$, the Dirac measure at the origin, as {$\alpha \to 1$}. So the limiting measure is an interpolation  between $\nu$ and $\delta_0$. 

Together, these suggest different universality classes than those included by the Circular law. Previous studies of sparsity and the Circular law have studied iid ensembles that have sparsity $\alpha = \alpha_n$ converging to {1} slow enough whose ESDs still limit to $\nu$ \cite{pmw,rudelson}. Other studies have similarly explored the impact of sparsity on the extreme eigenvalues in the Hermitian case, which has ESDs limiting to the semicircular law \cite{tik_semi}. Results in the literature for fixed sparsity random ensembles remain sparse. The above discussion provides supporting evidence for the following conjecture:

\begin{conjecture}
\label{conj}
    Fix $\alpha \in {[0,1)}$. Let $A = P L Q$ be the $n\times n$ matrix, where $P$ and $Q$ are iid uniformly chosen permutation matrices and $L$ is a $n\times n$ random matrix independent of $P$ and $Q$ whose nonzero entries are iid from $\xi$ with $\E \xi = 0$, $\E|\xi| = \sigma^2$ and $\E|\xi|^4 < \infty$, where $L_{ij} = 0$ if $i+\lfloor k_\alpha \rfloor < j$. Then there exists a probability measure, $\nu_\alpha$, on $\mathbb D$ that is an interpolation between the uniform measure on $\mathbb D$, $\nu$, and the Dirac measure at the origin, $\delta_0$, such that $\mu_{A_n/\sqrt{ n \sigma^2{(1-\alpha)}}}$ converges weakly to $\nu_\alpha$ in probability and almost surely. Furthermore, $\nu_\alpha \to  \nu$ as {$\alpha \to 0$} and $\nu_\alpha \to \delta_0$ as {$\alpha \to 1$}, with both convergences holding uniformly with respect to the total variation distance.
\end{conjecture}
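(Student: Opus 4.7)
The plan is to use Girko's Hermitization approach. Setting $s_n = \sqrt{n\sigma^2(1-\alpha)}$, the idea is to identify $\nu_\alpha$ via its logarithmic potential
\begin{equation}
-\int \log|w-z|\,d\nu_\alpha(w) = -\lim_{n\to\infty}\frac{1}{n}\sum_{k=1}^n \log\sigma_k\bigl(A_n/s_n - z\V I\bigr),
\end{equation}
valid for Lebesgue a.e.\ $z \in \mathbb{C}$, so the non-Hermitian problem reduces to (a) finding the weak limit of the empirical singular value distribution of $A_n/s_n - z\V I$ for a.e.\ $z$, and (b) establishing uniform integrability of $\log$ against this family, which boils down to a lower tail bound on the smallest singular value. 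This is the skeleton used to prove \Cref{thm:circular}, but the structural hypotheses on $A_n$ here differ, so each ingredient must be reworked.

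For step (a), I would analyze the resolvent of the $2n \times 2n$ Hermitization
\begin{equation}
\mathcal H_n(z) = \begin{pmatrix} 0 & A_n/s_n - z\V I \\ (A_n/s_n - z\V I)^* & 0 \end{pmatrix}
\end{equation}
and derive a fixed-point equation for its limiting matrix-valued Stieltjes transform. The key structural observation is that although $A_n = PLQ$ does not have iid entries, conditional on $Q$ each row of $A_n$ consists of a deterministic multiset of iid $\xi$-entries arranged in a uniformly random order inherited from $P$, and the symmetric statement holds for columns given $P$. Operator-valued free-probability techniques combined with this bi-permutation exchangeability should yield both existence of the limit and a formula for the distribution function $r \mapsto \nu_\alpha(\{w : |w| \le r\})$, which exchangeability forces to be radially symmetric.

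The main obstacle will be step (b), the smallest singular value bound $\sigma_{\min}(A_n/s_n - z\V I) \ge n^{-C}$ with high probability, uniformly for $z$ in compact subsets of $\mathbb C$. Standard iid-column methods of Rudelson--Vershynin / Tao--Vu do not apply directly because each column of $A_n$ has a fixed number of nonzero entries placed in random positions, rather than a Bernoulli-sparse iid profile. The cleanest route I foresee is to condition on $Q$, reducing $A_n - z\V I$ to $PL Q - z\V I$ with one source of randomness frozen, and then apply a Littlewood--Offord-type small-ball inequality row by row, using that once $P$ is realized the row entries are independent $\xi$-copies with fixed (shifted) support. For $\alpha$ near $1$, where rows can have only $O(1)$ nonzero entries, classical invertibility tools break down and sparse-matrix machinery in the spirit of Basak--Rudelson or Tikhomirov would need to be adapted.

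Finally, the endpoint limits and continuity in $\alpha$ should emerge from the fixed-point equation itself. As $\alpha \to 0$ every row of $L$ becomes asymptotically full, so the limiting equation degenerates to the quadratic equation governing the singular values of a shifted Ginibre matrix, recovering $\nu_\alpha \to \nu$ by \Cref{thm:circular}. As $\alpha \to 1$, $\rank(A_n) = o(n)$ while $\|A_n/s_n\|_F^2$ stays $\Theta(1)$, forcing $1 - o(1)$ of the mass of $\nu_\alpha$ to cluster at the origin; the claimed uniform total-variation convergence should follow from monotonicity of the log-potential in $\alpha$. Almost sure convergence, beyond convergence in probability, should follow from a standard Azuma--McDiarmid concentration argument applied to $\frac{1}{n}\log\bigl|\det(A_n/s_n - z\V I)\bigr|$ viewed as a function of the independent coordinates $\xi_{ij}$, $P$, and $Q$.
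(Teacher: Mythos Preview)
The statement you are attempting to prove is labeled a \emph{Conjecture} in the paper and is not proved there at all. The paper offers only heuristic and numerical support: \Cref{fig:PL eig,fig:PL fixed sparsity} display computed eigenvalues for several choices of $\xi$ and $\alpha$, and the surrounding discussion argues informally that the scaling $\sqrt{n\sigma^2(1-\alpha)}$ keeps the spectrum inside $\mathbb D$ and that the pictures interpolate between the circular law and a point mass at the origin. There is no Hermitization argument, no singular-value analysis, and no smallest-singular-value bound in the paper; the conjecture is explicitly presented as open.

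Your proposal is therefore not comparable to a paper proof but is instead a research plan for attacking an open problem. As a plan it is sensible in outline --- Girko Hermitization is indeed the natural framework --- but what you have written is a sketch with self-acknowledged gaps rather than a proof. In particular, step (b) is the crux, and you concede that for $\alpha$ close to $1$ the rows of $L$ have only $O(1)$ nonzero entries so that standard invertibility machinery fails; saying that Basak--Rudelson or Tikhomirov techniques ``would need to be adapted'' is not a proof step. Similarly, your claim that the endpoint limits and total-variation continuity ``should emerge from the fixed-point equation'' and ``should follow from monotonicity of the log-potential in $\alpha$'' are hopes, not arguments. One concrete simplification you overlook: the paper notes (in the remark following the conjecture) that $PLQ$ is similar to $(QP)L$ and $QP\sim P$, so without loss of generality $Q=\V I$; your bi-permutation exchangeability discussion is more elaborate than necessary.
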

\begin{remark}
For {$\alpha = 0$}, this is the circular law.
\end{remark}
\begin{remark}
Note the right permutation matrix $Q$ term is irrelevant, since $A$ is similar to $QPL$, and $QP \sim P$ since the uniform measure on $S_n$ is left- and right-invariant (it is the Haar measure on $\mathcal P_n$). So the study of the above ensembles reduces to the ensembles of the form $\mathcal{PL}_n(\xi,\alpha)$ for centered $\xi$ with finite fourth moments.
\end{remark}

\begin{remark}
Additional random ensembles that can be studied in light of \Cref{conj} include perturbations of $\mathcal{PL}_n(\xi,\alpha)$ for deterministic matrices (similar to those studied in \cite{pmw,TaoVu}), as well as $P(L + \V I_n)$ for $PL \sim \mathcal{PL}_n(\xi,\alpha)$. This latter model is interesting when $\alpha {>} \frac12$ since the corresponding $L$ factor has eigenvalues of 0 with multiplicity $n$; in particular, $L$ and hence $PL$ does not have full rank. Using experiments for several fixed $\alpha {>} \frac12$, then the nullity of $PL \sim \mathcal{PL}_n(N(0,1),\alpha)$ appears to be approximately $(1 - \sqrt{2{(1-\alpha)}})n$,\footnote{So the rank is approximately $\sqrt{2(1-\alpha)}n$.} while 0 is an eigenvalue of multiplicity approximately $(1 - 2{(1-\alpha)})n{=(2\alpha - 1)n}$; when $\alpha {\le} \frac12$, both are 0 (a.s.). Conversely, now considering $A=P(L + \V I_n)$, then 0 is never an eigenvalue for $A$ and so $A$ is always full rank for all $\alpha$ {(a.s.)}.
\end{remark}

\section{Numerical experiments}
\label{sec:num}

The final section  focuses on a set of experiments that  studies the number of GEPP pivot movements needed on particular random linear systems. These experiments will expand on results first presented in \cite{jpm}, which studied the impact on the growth factors and relative error computations when using common random transformations from numerical linear algebra on particular fixed linear systems, $A \V x = \V b$. Both initial models represent scenarios when no GEPP pivot movements are needed, so we will refer to both here as min-movement models. Carrying forward the naming scheme from \cite{jpm}, the two linear systems studied include:
\begin{enumerate}
\item the \emph{min-movement (na\"ive) model}\footnote{The na\"ive model was named to reflect that using any method is unnecessary to solve the trivial linear system $\V I \V x = \V x= \V b$.}, with $A = \V I_n$ where {the GEPP growth factor} $\rho(\V I_n) = 1$ is minimized, and
\item the \emph{min-movement (worst-case) model}\footnote{The worst-case moniker was chosen to reflect the numerical stability of computed solutions using GEPP, which is controlled by the growth factors: solving $A_n \V x = \V b$ sees relative errors of order $\mathcal O(1)$ starting when $n \approx 60$.}, with $A = A_n$ a particular linear model that maximizes the growth factor $\rho(A_n) = 2^{n-1}$.
\end{enumerate}
In the na\"ive model, the authors studied the 1-sided random transformation $\Omega \V I = \Omega$, which provides a means to directly study the corresponding random matrix, $\Omega$, itself. The worst-case model considered the 2-sided random transformations, $U A_n V^*$, where $U,V$ were independently sampled random matrices; this 2-sided transformation follows the construction used by Parker that would remove the need for pivoting in GEPP (with high probability), so that $UA_n V^* = LU$ has a GE{NP} factorization \cite{Pa95}. The matrix $A_n$ is of the form
\begin{equation}
A_n = \V I_n - \sum_{i > j} \V E_{ij} + \sum_{j=1}^{n-1} \V E_{in}.
\end{equation}
Wilkinson introduced $A_n$ to establish the {GEPP} growth factor bound $\rho(A) \le 2^{n-1}$ is sharp \cite{Wi61}. By construction, no GEPP pivoting would be needed at any intermediate GE step when using $A_n$, so the final GENP and GEPP factorizations of $A_n$ both align, with $A_n = L_nU_n$ for $L_n = \V I_n - \sum_{i>j} \V E_{ij}$ and $U_n = \V I_n - \V E_{nn} + \sum_{k=1}^n 2^{k-1} \V E_{kn}$. It follows $\rho(A_n) = |U_{nn}| = 2^{n-1}$. For example,
\begin{equation}
A_4 = \begin{bmatrix}1&0&0&1\\-1&1&0&1\\-1&-1&1&1\\-1&-1&-1&1 \end{bmatrix} = \begin{bmatrix}1&0&0&0\\-1&1&0&0\\-1&-1&1&0\\-1&-1&-1&1 \end{bmatrix}\begin{bmatrix} 1&0&0&1\\0&1&0&2\\0&0&1&4\\0&0&0&8\end{bmatrix} = L_4U_4
\end{equation}
has $\rho(A_4) = 8 = 2^3$.

With respect to GEPP, both the na\"ive and worst-case models use input matrices that need 0 total pivot movements. We will study both of these min-movement models in addition to a third model, that looks at the other extreme in terms of the number of GEPP pivot movements:
\begin{enumerate}
\setcounter{enumi}{2}
\item the \emph{max-movement model}, where $A \sim \mathcal{PL}^{\max}_n(\xi)$ for $\xi \sim \Uniform([-1,1])$. 
\end{enumerate}
Note if $A = PL \sim \mathcal{PL}_n^{\max}(\xi)$ when $|\xi| < 1$ a.s., then $\rho(A) = \rho(L) = 1$. 

We will consider only the 1-sided transformation case for the na\"ive model (so that we can study the random matrices themselves) and only the 2-sided transformation cases for the worst-case and max-movement models. Together, these three models will allow us to study the impact of random transformations on two different systems where no GEPP pivot movements are needed as well as one (independently sampled random) system where a maximal number of GEPP pivot movements are needed.

For each set of experiments, we will consider the following random transformations for fixed $N = 2^n$:
\begin{itemize}
\item $\B_s(N,\Sigma_S)$
\item $\B(N,\Sigma_S)$
\item $\B_s(N,\Sigma_D)$
\item $\B(N,\Sigma_D)$
\item Walsh transform
\item $\Haar(\O(N))$
\item Discrete Cosine Transform (DCT II)
\end{itemize}
To ease the following discussion, we choose $N = 2^4 = 16$ and $N = 2^8 = 256$ to focus on as we feel they are representative of the behavior we saw for other choices of $N$. For the na\"ive model, which will study the pivot movements for each of the associated random matrices themselves (using the 1-sided preconditioning with $A = \V I$), our experiments will additionally use:
\begin{itemize}
\item $\operatorname{GOE}(N)$
\item $\operatorname{GUE}(N)$
\item $\Bernoulli(\frac12)$
\end{itemize}
These models were touched on for $N = 2$ in \Cref{ex:bern,ex:goe,ex:gue}.

Each of the butterfly models is sampled using custom MATLAB recursive functions with iid uniformly chosen angles\footnote{The number of angles used depends on if the cosine and sine matrices are scalar and if the butterfly matrix is simple. For $\B_s(N,\Sigma_S)$, then one uniform angle is sampled at each recursive step, for $n = \log_2N$ total uniform angles needed, while similarly $\B(N,\Sigma_S)$ and $\B_s(N,\Sigma_D)$ both sample a total of $N - 1$ uniform angles, with $\B(N,\Sigma_D)$ using $\frac12 Nn$ total uniform angles. These compare to $\Haar(\O(N))$, which (using Givens rotations to find the $QR$ factorization of $\operatorname{Gin}(N,N)$) can be sampled using $\binom{N}2=\frac12N(N-1)$ uniform angles. The above ordering reflects this ordering of complexity.} in line with methods outlined in \cite{jpm,Tr19}. See \Cref{subsec:prelim} for more information and sampling techniques of the Walsh, Haar orthogonal, DCT, $\operatorname{GOE}(N)$ and $\operatorname{GUE}(N)$ transformations. The Bernoulli ensemble uses iid $\Bernoulli(\frac12)$ entries\footnote{$\Bernoulli$ matrices are sampled using native MATLAB functions, \texttt{round(rand(n))}.}. Each set of experiments (using $N=2^4$ and $N=2^8$ for all three models) will use 10,000 trials using MATLAB in double {precision}, where $\epsilon = 2^{-52}$ ($\epsilon \approx 2.220446 \cdot 10^{-16}$).










\subsection{Min-movement (na\"ive) model}
For the na\"ive model, our goal is to study the number of GEPP pivot movements needed for 10 different random ensembles. These allow us to gauge the impact on (1-sided) random transformations on the simplest linear system, $\V I \V x = \V b$, in terms of the number of GEPP pivot movements needed. Starting with $\V I$, no pivot movements are needed, so the transformed system $\Omega \V I = \Omega$ then allows us to study how much this transformation introduces new pivot movements. This model also then enables us to directly study the number of pivot movements needed for each random matrix, $\Omega$.

\Cref{t:pivots_naive} shows the sample medians, means ($\bar x$), and standard deviations ($s$) for the 10,000 trials each for $N = 2^4$ and $N = 2^8$, while \Cref{fig:hist_naive} summarizes the total number of GEPP pivot movements encountered for each sampled random matrix across each set of trials. Note the axes in \Cref{fig:hist_naive} show each possible step output for 0 to $N-1$.

\begin{table}[ht!]
\centering
{
\begin{tabular}{r|ccc|ccc}
     &\multicolumn{3}{c}{$N = 16$} &\multicolumn{3}{|c}{$N=256$}\\
     & Median & $\bar x$ & $s$& Median & $\bar x$ & $s$\\ \hline 
$\B_s(N,\Sigma_S)$	&	 8 	 & 	 7.492 	 & 	 1.951 	 & 	 128 	 & 	 127.501 	 & 	 7.978 	 \\ 
$\B(N,\Sigma_S)$	&	 11 	 & 	 10.934 	 & 	 2.622 	 & 	 232 	 & 	 230.392 	 & 	 14.418 	 \\ 
$\B_s(N,\Sigma_D)$	&	 12 	 & 	 11.287 	 & 	 2.459 	 & 	 245 	 & 	 241.915 	 & 	 10.308 	 \\ 
$\B(N,\Sigma_D)$	&	 13 	 & 	 12.535 	 & 	 1.430 	 & 	 250 	 & 	 249.901 	 & 	 2.112 	 \\ 
Walsh	&	 6 	 & 	 6 	 & 	 -   	 & 	 120 	 & 	 120 	 & 	 -   	 \\ 
$\Haar(\O(N))$	&	 13 	 & 	 12.624 	 & 	 1.345 	 & 	 250 	 & 	 249.884 	 & 	 2.123 	 \\ 
DCT II	&	 13 	 & 	 13 	 & 	 -   	 & 	 249 	 & 	 249 	 & 	 -   	 \\ 
$\operatorname{GOE}(N)$	&	 11 	 & 	 10.954 	 & 	 1.780 	 & 	 249 	 & 	 248.696 	 & 	 2.359 	 \\ 
$\operatorname{GUE}(N)$	&	 11 	 & 	 11.132 	 & 	 1.761 	 & 	 249 	 & 	 248.867 	 & 	 2.348 	 \\ 
Bernoulli	&	 11 	 & 	 10.509 	 & 	 1.774 	 & 	 248 	 & 	 247.783 	 & 	 2.467 			
\end{tabular}
}
\caption{Pivot counts for numerical experiments for GEPP with 10,000 trials, for random matrices of orders $N=2^4$ and $N=2^8$}
\label{t:pivots_naive}
\end{table}

\begin{figure}[htbp!]
  \centering
  \includegraphics[width=0.8\textwidth]{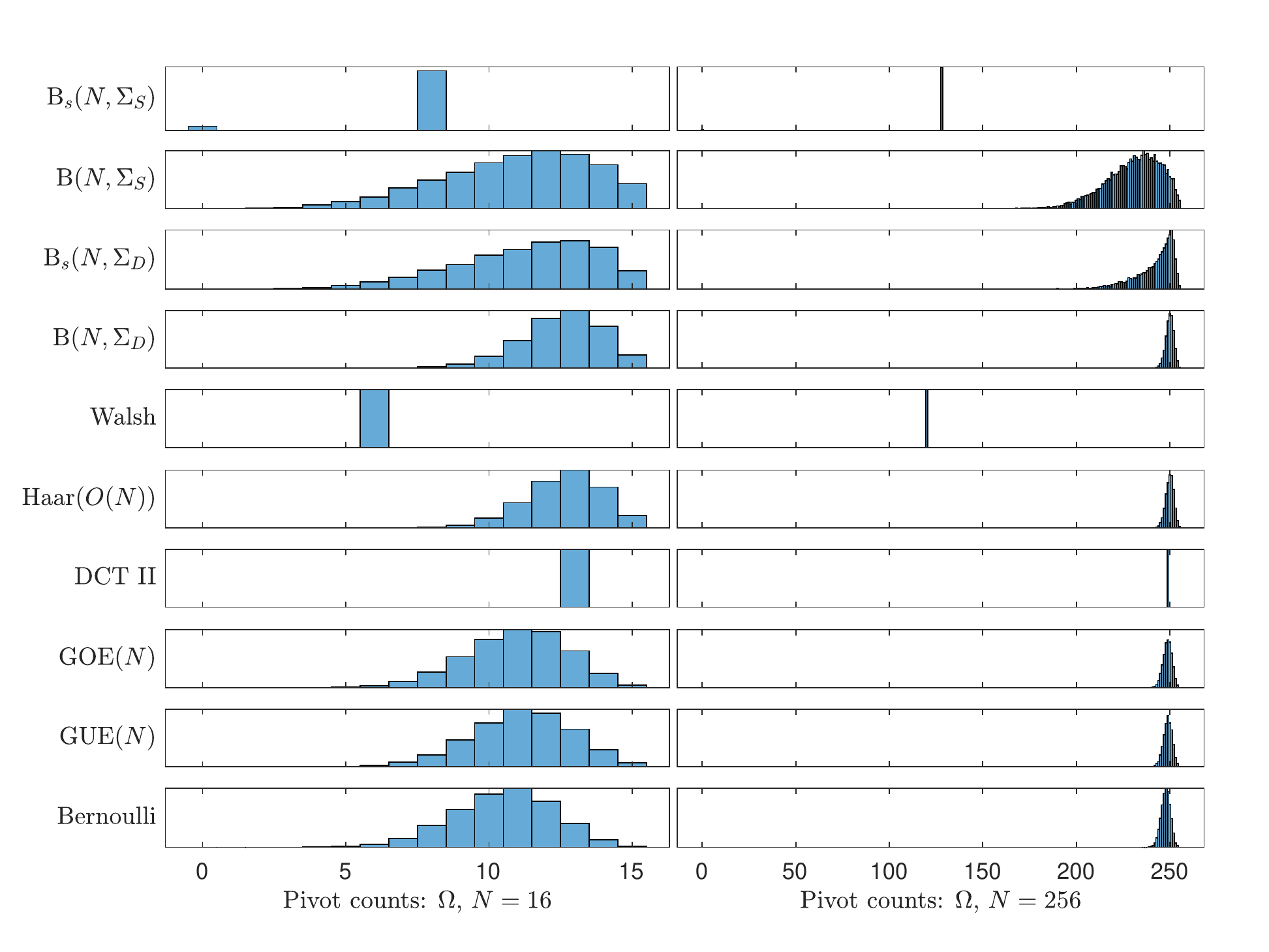}
  \caption{Histogram of $10^4$ samples of pivot movement counts for random matrices of order $N = 2^4$ and $N = 2^8$.}
  \label{fig:hist_naive}
\end{figure}

\subsubsection{Discussion}

For each set of experiments, the Haar-butterfly and Walsh transforms introduce the least amount of additional movement from the initial minimal GEPP movement setup, each with total pivot movements at most $N/2$ while the remaining models had total pivot movements closer to the upper bound of $N-1$.

By construction, both the Walsh and DCT models have deterministic output. This follows since the transformations are of the form $WD$ where $W$ is a deterministic associated matrix (the default Fast-Walsh Hadamard matrix or the DCT matrix used by the native MATLAB functions for the associated multiplication operators) while $D$ is a random row sign matrix sampled uniformly from $\{\pm 1\}^N$. Hence, if the GEPP factorization of $W$ is $PW = LU$, then the GEPP factorization of $WD$ is $PWD = L(UD)$. So the permutation matrix factor is independent of $D$ for these models. This is reflected in \Cref{fig:hist_naive,t:pivots_naive} (e.g., both sample standard deviations are 0).

The two Haar random ensembles studied (viz., $\B_s(N,\Sigma_S)$ and $\Haar(\O(N))$) have the full distribution on the number of GEPP pivot movements determined by \Cref{thm:main}, using also \Cref{cor:haar o}. From \Cref{fig:hist_naive}, these two models also appear to represent relative extreme models among the random ensembles considered in these trials, with the resulting uniform GEPP permutation matrix factor yielding the most pivot movements.

For Haar-butterfly matrices, $B \sim \B_s(N,\Sigma_S)$, we can directly compare the sample statistics against the exact distribution statistics for $Y_N {= \Pi(B)} \sim \frac N2 \Bernoulli(1 - \frac1N)$. We can compute exactly
\begin{align}
    \E Y_N &= \frac{N}2 \left(1-\frac1N\right) \quad \mbox{and}\\
    \sigma_{Y_{N}} &= \frac{N}2 \sqrt{\left(1-\frac1N\right) \cdot \frac1N},
\end{align}
This yields that $\E Y_{16} = 7.5$ and $\sigma_{Y_{16}} \approx  1.93649167$, which align (as expected) with the associated sample mean of 7.492 and sample standard deviation of 1.951 from \Cref{t:pivots_naive} for $N=16$. Similarly, the exact values $\E Y_{256} = 127.5$ and 
 $\sigma_{Y_{256}} \approx  7.98435971$ align with the sample statistics $\bar x = 127.501$ and $s = 7.978$ for $N = 256$. Moreover, as can be seen in \Cref{fig:hist_naive}, the trials resulted only in total GEPP movements of 0 or $\frac N2$, as should be expected for a scaled Bernoulli distribution. This agrees with the result from \cite{tikhomirov} that the computed GEPP permutation matrix factors using floating-point arithmetic and exact arithmetic align with very high probability for $\operatorname{Gin}(n,n)$. Other standard sample statistic comparisons for the Haar-butterfly matrices similarly align, as expected\footnote{For example, the sample medians exactly match the exact medians of $N/2$. Also, we can compare the sample proportion $\hat p_N$ to the population success parameter $p_N = 1 - \frac1N$. These again compare very favorably, where $1 - \hat p_{16} = 0.0635$ aligns with $1 - p_{16} = \frac1{16} = 0.0625$ and $1 - \hat p_{256} = 0.0039$ aligns with $1 - p_{256} = \frac1{256} = 0.00390625$.}.
 
Similarly, we can compare the output for the $\Haar(\O(N))$ trials, which have {$X_N = \Pi(A)$ for $A \sim \Haar(\O(N))$}, the total GEPP pivot movements needed, equal in distribution to $N - \Upsilon_N$. We can compute exactly
\begin{align}
    \E X_N &= N - \E \Upsilon_N = N - H_N \quad \mbox{and}\\
    \sigma_{X_N} &= \sigma_{\Upsilon_N} = \sqrt{H_N - H_N^{(2)}}.
\end{align}
This yields that $\E X_{16} \approx 12.619271006771006$ and  $\sigma_{X_{16}} \approx 1.340291930806123$, which align with the associated sample mean of 12.624 and sample standard deviation of 1.345 from \Cref{t:pivots_naive} for $N = 16$. Similarly, the exact values $\E X_{256} \approx 249.8756550371827$ and $\sigma_{X_{256}} \approx 2.117382706670809$ align with the sample statistics $\bar x = 249.696$ and $s = 2.123$ for $N = 256$.


\Cref{fig:hist_naive} shows the butterfly model {total} pivot movements lie strictly between the pivot movements for Haar-butterfly matrices and $\Haar(\O(N))$, with the increase in associated number of uniform angles needed for the butterfly models leading to the sample distributions progressively moving toward the $\Haar(\O(N))$ model for both $N = 16$ and $N=256$. While $\B(N,\Sigma_D)$ results in pivot movements very close to those modeled by the uniform GEPP permutation matrix factors, $\B(N,\Sigma_S)$ and $\B_s(N,\Sigma_D)$ lie strictly in between both the Haar-butterfly and Haar orthogonal pivot movements. Moreover, the remaining random models for $\operatorname{GOE}(N)$, $\operatorname{GUE}(N)$ and $\Bernoulli$ have pivot movement distributions staying to the left of the Haar orthogonal model, which move closer to the Haar orthogonal model distribution as $N$ increases. This suggests as $N$ increases for these remaining models, the resulting random GEPP permutation matrix moves closer to a uniform permutation matrix. 

\begin{remark}
For both the Haar-butterfly and Haar orthogonal models, the 1-sided na\"ive model is equivalent to the 2-sided na\"ive model since $U\V I_N V^* = UV^* \sim U$ by the right-invariance of the Haar measure. This does not hold, however, for any other random matrices in the na\"ive experiments.
\end{remark}

\begin{remark}For the Bernoulli model, it is possible to generate a singular random matrix, which occurs with probability $2^{-N}(1+o(1))$
\cite{bernoulli}. Of the 10,000 trials, this occurred 48 times for $N = 16$ and 0 times for $N = 256$. \Cref{t:pivots_naive,fig:hist_naive} show the summary statistics and overall GEPP pivot counts for the remaining 9,952 nonsingular Bernoulli matrices when $N = 16$.
\end{remark}

\subsection{Min-movement (worst-case) model}

For the worst-case model, we want to study the number of GEPP pivot movements needed when starting with a fixed linear system, $A_N$, that again requires no pivot movements. This provides a means to measure how much new GEPP pivot movements are generated by these random transformations. For this model, we will consider only the 2-sided transformation, $U A_N V^*$, where $U$ and $V$ are iid samples from each random model used in the experiments.

Analogously to the na\"ive model, \Cref{t:pivots_wc} shows the sample medians, means ($\bar x$), and standard deviations ($s$) for the 10,000 trials again for $N = 2^4$ and $N = 2^8$, while \Cref{fig:hist_wc} summarizes the total number of GEPP pivot movements encountered for each sampled $UA_NV^*$ across each set of trials. 

\begin{table}[ht!]
\centering
{
\begin{tabular}{r|ccc|ccc}
     &\multicolumn{3}{c}{$N = 16$} &\multicolumn{3}{|c}{$N=256$}\\
     & Median & $\bar x$ & $s$& Median & $\bar x$ & $s$\\ \hline 
$\B_s(N,\Sigma_S)$	&	 11 	 & 	 10.563 	 & 	 2.380 	 & 	 179 	 & 	 181.784 	 & 	 27.493 	 \\ 
$\B(N,\Sigma_S)$	&	 12 	 & 	 12.217 	 & 	 1.760 	 & 	 249 	 & 	 247.936 	 & 	 4.322 	 \\ 
$\B_s(N,\Sigma_D)$	&	 12 	 & 	 11.967 	 & 	 1.995 	 & 	 249 	 & 	 247.493 	 & 	 5.521 	 \\ 
$\B(N,\Sigma_D)$	&	 13 	 & 	 12.558 	 & 	 1.406 	 & 	 250 	 & 	 249.876 	 & 	 2.090 	 \\ 
Walsh	&	 12 	 & 	 12.245 	 & 	 1.617 	 & 	 250 	 & 	 249.654 	 & 	 2.253 	 \\ 
$\Haar(\O(N))$	&	 13 	 & 	 12.636 	 & 	 1.335 	 & 	 250 	 & 	 249.900 	 & 	 2.129 	 \\ 
DCT II	&	 12 	 & 	 11.820 	 & 	 1.719 	 & 	 250 	 & 	 249.447 	 & 	 2.313 			
\end{tabular}
}
\caption{Pivot counts for numerical experiments for GEPP with 10,000 trials, for 2-sided transformation of Worst-case model of orders $N=2^4$ and $N=2^8$}
\label{t:pivots_wc}
\end{table}

\begin{figure}[htbp!]
  \centering
  \includegraphics[width=0.8\textwidth]{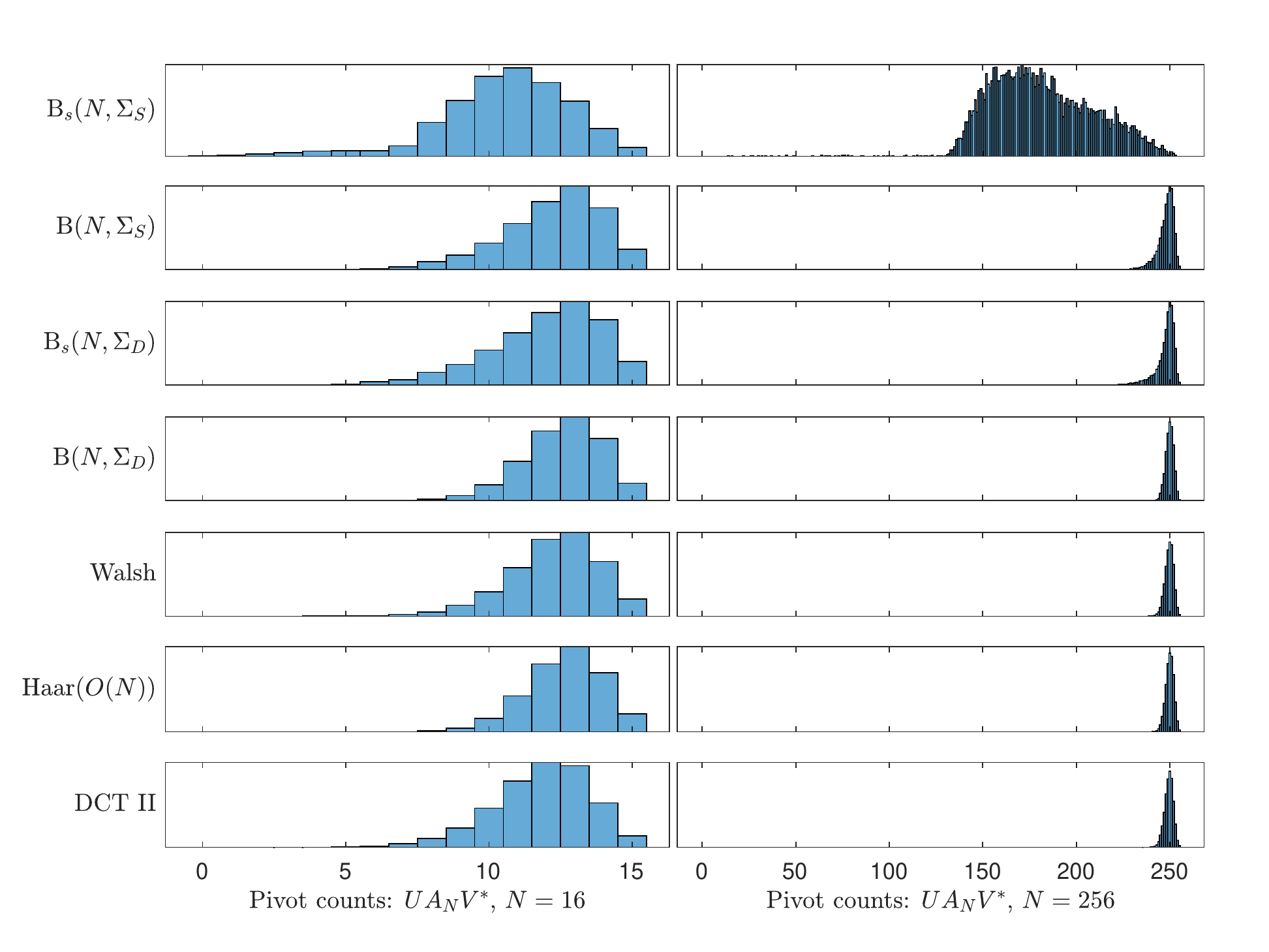}
  \caption{Histogram of $10^4$ samples of pivot movement counts for 2-sided random transformations of order $N = 2^4$ and $N = 2^8$ worst-case model, $UA_N V^*$.}
  \label{fig:hist_wc}
\end{figure}

\subsubsection{Discussion}

Only the $\Haar(\O(N))$ model for $UA_NV^*$ is sampled from a distribution determined in \Cref{thm:main}, since \Cref{cor:2sided} yields {the} resulting GEPP permutation matrix factor is a uniform permutation matrix, so that $X_N{ = \Pi(UA_NV^*)}$, the number of GEPP pivot movements, is equal in distribution to $N - \Upsilon_N$. Since $A_N$ does not preserve the Kronecker product structure, the Haar-butterfly pivot movement distribution is not preserved. Hence, the number of GEPP pivot movements is no longer a scaled Bernoulli distribution and now has full support on $0,1,\ldots,N-1$.

Again, both the Haar-butterfly and Haar orthogonal models provide representatives for the extremes in the number of pivot movements introduced by these random transformations. As in the na\"ive model, the Haar-butterfly transformation introduced the least amount of new GEPP pivot movements for the initial minimal GEPP pivot movement model $A_N$. 

Of the remaining models, only $\B(N,\Sigma_S)$ and $\B_s(N,\Sigma_D)$ have resulting distributions that do not appear to align with the Haar orthogonal model, although they both appear much closer to the Haar orthogonal than Haar-butterfly models. The remaining models' alignment with the Haar orthogonal models manifests even for the small $N = 16$ experiments for $\B(N,\Sigma_D)$ and the Walsh transform: the exact values $\E X_{16} \approx 12.619271006771006$ and  $\sigma_{X_{16}} \approx 1.340291930806123$ compare to the remaining respective samples means of 12.558 and 12.245 and sample standard deviations of 1.406 and 1.617 for the $\B(N,\Sigma_D)$ and Walsh models. This alignment is even more pronounced for $N = 256$:  the exact values $\E X_{256} \approx 249.8756550371827$ and $\sigma_{X_{256}} \approx 2.117382706670809$ line up very well for $\B(N,\Sigma_D)$, Walsh, and DCT II models, whose sample means range from 249.447 to 249.876 and whose sample standard deviations range from 2.090 to 2.313. Moreover, the remaining models have sample medians each of 250 that exactly match that for the Haar orthogonal model for $N = 256$, while the sample medians match or are  smaller by one than the true Haar orthogonal sample median of 13 for $N = 16$. Again, these suggest performance for the non-butterfly models moving toward the uniform row permutation {model} as $N$ increases.

\subsection{Max-movement model}

While the min-movement models studied the impact of random transformations on the number of pivot movements introduced to initial models that require no GEPP pivot movements, the max-movement model will instead study the impact of the random transformations on a model that has maximal GEPP pivot movements, $PL \sim \mathcal{PL}_N^{\max}(\xi)$ for $\xi \sim \Uniform([-1,1])$. (Unlike the min-movements models, the input matrix $PL$ is random.) This provides a means to measure how much GEPP pivot movement can be removed by these random transformations. As in the worst-case model, we will consider only the 2-sided transformation, $U PL V^*$, where $U$ and $V$ are iid samples from each random model.

\Cref{t:pivots_pl} shows the sample medians, means ($\bar x$), and standard deviations ($s$) for the 10,000 trials each for $N = 2^4$ and $N = 2^8$, while \Cref{fig:hist_pl} summarizes the total number of GEPP pivot movements encountered for each sampled matrix $UPLV^*$.

\begin{table}[ht!]
\centering
{
\begin{tabular}{r|ccc|ccc}
     &\multicolumn{3}{c}{$N = 16$} &\multicolumn{3}{|c}{$N=256$}\\
     & Median & $\bar x$ & $s$& Median & $\bar x$ & $s$\\ \hline 
$\B_s(N,\Sigma_S)$	&	 13 	 & 	 12.580 	 & 	 1.348 	 & 	 250 	 & 	 249.864 	 & 	 2.126 	 \\ 
$\B(N,\Sigma_S)$	&	 13 	 & 	 12.594 	 & 	 1.369 	 & 	 250 	 & 	 249.899 	 & 	 2.090 	 \\ 
$\B_s(N,\Sigma_D)$	&	 13 	 & 	 12.613 	 & 	 1.357 	 & 	 250 	 & 	 249.901 	 & 	 2.120 	 \\ 
$\B(N,\Sigma_D)$	&	 13 	 & 	 12.626 	 & 	 1.322 	 & 	 250 	 & 	 249.887 	 & 	 2.121 	 \\ 
Walsh	&	 13 	 & 	 12.630 	 & 	 1.332 	 & 	 250 	 & 	 249.879 	 & 	 2.123 	 \\ 
$\Haar(\O(N))$	&	 13 	 & 	 12.625 	 & 	 1.339 	 & 	 250 	 & 	 249.833 	 & 	 2.130 	 \\ 
DCT II	&	 13 	 & 	 12.573 	 & 	 1.344 	 & 	 250 	 & 	 249.923 	 & 	 2.116 			
\end{tabular}
}
\caption{Pivot counts for numerical experiments for GEPP with 10,000 trials, for 2-sided transformation of max-movement model of orders $N=2^4$ and $N=2^8$}
\label{t:pivots_pl}
\end{table}

\begin{figure}[htbp!]
  \centering
  \includegraphics[width=0.8\textwidth]{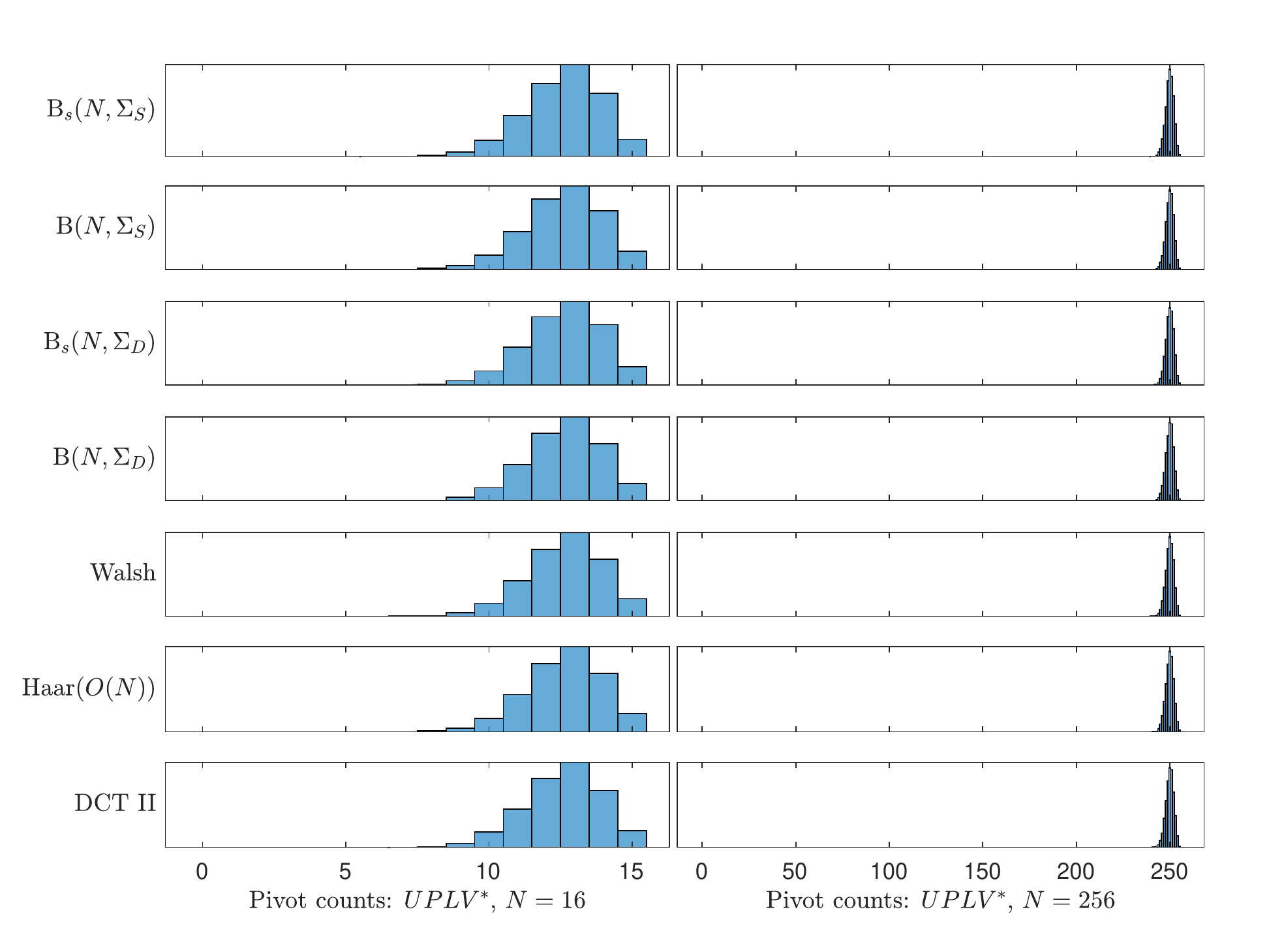}
  \caption{Histogram of $10^4$ samples of pivot movement counts for 2-sided random transformations of order $N = 2^4$ and $N = 2^8$ maximal movement real model, $UPL V^*$.}
  \label{fig:hist_pl}
\end{figure}

\subsubsection{Discussion}

As in the worst-case model, only the Haar orthogonal transformed model $U PL V^*$ has its distribution determined by \Cref{thm:main}, where \Cref{cor:2sided} again yields $X_N{ = \Pi(UPLV^*)}$, the number of GEPP pivot movements, correspond to uniform row permutations, so $X_N \sim N - \Upsilon_N$. Unlike both min-movement models, all of the resulting experiments align strongly with this uniform row permutation model. All of the sample means are within 0.05 of the exact means $\E X_{N}$ and all of the sample standard deviations are within 0.02 of the exact standard deviations $\sigma_{X_N}$ for both $N = 16$ and $N = 256$ (see \Cref{t:pivots_pl}). Moreover, every set of experiments exactly matched the true medians for the uniform row permutation models of 13 for $N = 16$ and 250 for $N = 256$. Hence, this suggests every random transformation had essentially equivalent dampening impacts on the total GEPP pivot movements when starting with a maximal pivot movement model.


\subsection{Conclusions}

The Haar orthogonal model, which had GEPP pivot movements $X_n \sim n - \Upsilon_n$, remains a strong comparison point for each random transformation across each min- and max-movement model. In each case, $X_n$ represents both an upper bound for overall performance in terms of numbers of pivot movements, as well as a limiting class for most random transformations, which further suggest a universality result in terms of GEPP pivot movements. Since the Haar orthogonal model results in uniform GEPP permutation matrix factors, this suggests most random transformation classes have sufficient random mixing properties using both minimal and maximal GEPP movement input models. Undesirably, however, this model asymptotically is concentrated near the upper bound of $n-1$ in terms of total pivot movements, with average $n - H_n = (n-1)(1 + o(1))$. 

The Haar-butterfly model introduced the least amount of additional pivot movements among the min-movement models, while the remaining butterfly models introduced increasing pivot movements as they increased in randomness (i.e., from $\B(N,\Sigma_S)$ and $\B_S(N,\Sigma_D)$ to $\B(N,\Sigma_D)$). However, only the Haar-butterfly models remained far from the upper bound for the min-movement models. In \cite{jpm}, a future direction wanted to explore the impact of combining random transformations (that remove the need of GEPP pivoting) with GEPP on the total number of  pivot movements. To address this concern, these experiments suggest the butterfly models do the least amount of damage in terms of introducing new GEPP pivot movements when starting with a linear system with little GEPP pivot movements necessary. However, no models had strong dampening performance when starting with a max-movement input system.

\begin{appendix}
\section{Notation and preliminaries}
\label{subsec:prelim}

For convenience, $N$ will be reserved for powers of 2, with $N = 2^n$. For $A \in \mathbb F^{n\times m}$ where $\mathbb F = \mathbb R$ or $\mathbb C$,  $A_{ij}$ denotes the entry in the $i$th row and $j$th column of $A$, while $A_{\alpha,\beta}$ will denote the submatrix of $A$ with row indices $\alpha \subset [n] := \{1,2,\ldots,n\}$ and $\beta \subset [m]$. Let $\V e_i$ denote the standard basis elements of $\mathbb F^n$ and $\V E_{ij} = \V e_i\V e_j^T$, the standard basis elements of $\mathbb F^{n\times m}$.   $\V I$ denotes the identity matrix and $\V 0$  the zero matrix or vector (with the dimensions implicit from context if not stated explicitly). If $A \in \mathbb F^{n\times n}$ is nonsingular, then $A^0 := \V I$. Let $\mathbb D = \{z \in \mathbb C: |z| < 1\}$ denote the unit complex disk, with $\partial \mathbb D$ denoting the unit complex circle. We will write $\mathbb S^{n-1} = \{\V x \in \mathbb F^n: \|\V x \|_2 = 1\}$, where $\|\cdot\|_2$ denotes the standard $\ell_2$-norm.

Let $S_n$ denote the symmetric group on $n$ elements. Recall every permutation $\sigma \in S_n$ can be written in cycle notation, with $\sigma = \tau_1 \tau_2 \cdots \tau_j$ where $\tau_i = (a_{i_1} \ a_{i_2} \ \cdots \ a_{i_k})$ is a $k$-cycle, such that $\tau_i(a_{i_m}) = a_{i_{m+1}}$ for $m<k$ and $\tau_i(a_{i_k}) = a_{i_1}$. Moreover, recall every permutation can be written as a product of disjoint cycles. For $\sigma \in S_n$, let $P_\sigma$ denote the orthogonal permutation matrix such that $P_\sigma \V e_i = \V e_{\sigma(i)}$. 
Let $\mathcal P_n$ denote the $n\times n$ permutation matrices, i.e., the left regular representation of the action of $S_n$ on $[n]$. 

Let $\|\cdot\|_{\max}$ denote the element-wise max norm of a matrix defined by $\|A\|_{\max} = \max_{i,j} |A_{ij}|$.  Define $A \oplus B \in \mathbb F^{(n_1+m_1)\times (n_2 + m_2)}$ to be the block diagonal matrix with blocks $A \in \mathbb F^{n_1 \times m_1}$ and $B \in \mathbb F^{n_2\times m_2}$. Define $A \otimes B \in \mathbb F^{n_1n_2 \times m_1m_2}$ to be the {Kronecker product} of $A \in \mathbb R^{n_1 \times m_1}$ and $B \in \mathbb R^{n_2\times m_2}$, given by
\begin{equation}
    \label{eq:kronecker_def}
    A \otimes B = \begin{bmatrix}
    A_{11} B & \cdots & A_{1,m_1} B\\
    \vdots & \ddots & \vdots\\
    A_{n_1,1} B & \cdots & A_{n_1,m_1} B
    \end{bmatrix}.
\end{equation}
Recall Kronecker products satisfy the {mixed-product property}: if all matrix sizes are compatible for the necessary matrix multiplications, then  
\begin{equation}
\label{eq: mixed product}
(A\otimes B)(C \otimes D) = (AC) \otimes (BD),
\end{equation}
i.e., the product of Kronecker products is the Kronecker product of the products. As a result, Kronecker products inherit certain shared properties of their input matrices. For example, if $A$ and $B$ are both orthogonal or unitary matrices, then so is $A \otimes B$. Similarly, if $A \in \mathcal P_n$ and $B \in \mathcal P_m$ then $A \otimes B \in \mathcal P_{nm}$. 

Let $\GL_n(\mathbb F)$ denote the group of nonsingular matrices with entries in $\mathbb F$. Let $\mathcal U_n(\mathbb F)$ denote the subgroup of nonsingular upper triangular matrices and $\mathcal L_n(\mathbb F)$ denote the subgroup of unipotent (i.e., with all diagonal entries equal to 1) lower triangular matrices. $\O(n)$ and $\U(n)$ denotes the orthogonal and unitary groups of $n\times n$ matrices and  $\SO(n),\SU(n)$ denote the respective special orthogonal and special unitary subgroups; note $\O(n)$ will be used for the orthogonal matrices while $\mathcal O(n)$ is the classical ``big-oh'' notation. Recall if $H$ is a subgroup of $G$, then $G/H = \{xH: x \in G\}$ will denote the set of left-cosets of $H$ in $G$ and $G\backslash H = \{Hx: x \in G\}$ the set of right-cosets of $H$ in $G$.

For random variables $X,Y$, we write $X \sim Y$ if $X$ and $Y$ are equal in distribution. Standard distributions that will be used in this document include $X \sim N(0,1)$ to denote a standard Gaussian random variable (with probability density $(2\pi)^{-1/2} e^{-x^2/2}$); $X \sim N_{\mathbb C}(0,1)$ to denote a standard complex Gaussian random variable (with $X \sim (Z_1 + iZ_2)/\sqrt 2$ for $Z_1,Z_2$ iid $N(0,1)$); $X \sim \Uniform(\mathcal A)$ to denote a uniform random variable with support on a compact set $\mathcal A$ with probability density $\frac1{|\mathcal A|}\mathds 1_{\mathcal A}$ (for $|\mathcal A|$ either denoting the cardinality of $\mathcal A$ if $\mathcal A$ is finite or the corresponding appropriate Lebesgue-measure of $\mathcal A$); $\xi \sim \operatorname{Bernoulli}(p)$ to denote a Bernoulli random variable with parameter $p \in [0,1]$ where $\P(\xi = 1) = p = 1-\P(\xi = 0)$; and $\xi \sim \operatorname{Rademacher}$ to denote a Rademacher random variable that takes only the values $1$ and $-1$ with equal probability (i.e., $\xi \sim (-1)^{\operatorname{Bernoulli}(1/2)}$). {A random variable is called (absolutely) continuous if its associated probability measure is absolutely continuous with respect to the Lebesgue measure, where we will assume the standard Lebesgue measure is used if the random variable is real and otherwise the standard complex Lebesgue measure if the random variable is complex valued.} Let $\operatorname{Gin}(n,m)$ denote the $n\times m$ Ginibre ensemble, consisting of random matrices with independent and identically distributed (iid) standard Gaussian entries; $\operatorname{Gin}_{\mathbb C}(n,m)$ will denote the similarly defined complex Ginibre ensemble, whose entries are iid standard complex Gaussian random variables. Let $\operatorname{GOE}(n)$ and $\operatorname{GUE}(n)$ denote the Gaussian Orthogonal and Gaussian Unitary Ensembles, respectively; recall these can be sampled using the Ginibre ensembles as follows: if $G \sim \operatorname{Gin}(n,n)$ and $H \sim \operatorname{Gin}_{\mathbb C}(n,n)$, then $(G+G^T)/\sqrt 2 \sim \operatorname{GOE}(n)$ and  $(H + H^*)/\sqrt 2 \sim \operatorname{GUE}(n)$. 

Let $\epsilon_{\operatorname{machine}}$ denote the machine-epsilon, which is the minimal positive number such that $\operatorname{fl}(1+\epsilon_{\operatorname{machine}} ) \ne  1$ using floating-point arithmetic.\footnote{We will use the IEEE standard model for floating-point arithmetic.} 
If using $t$-bit mantissa precision, then $\epsilon_{\operatorname{machine}} = 2^{-t}$. Our later experiments in \Cref{sec:num} will use double precision in MATLAB, which uses a 52-bit mantissa.

{Standard models from randomized numerical linear algebra will be used for comparison in \Cref{sec:num}. These will include the Walsh transformation and Discrete Cosine Transformations (DCT), which were previously used in \cite{jpm}. Sampling for the following experiments will use native (deterministic) MATLAB functions (viz., the Fast Walsh-Hadamard transform \texttt{fwht} and the default Type II Discrete cosine transform \texttt{dct}) applied after an independent row sign transformation chosen uniformly from $\{\pm 1\}^N$. See \cite{St99,Tr11} for an overview of numerical properties of the Walsh and DCT transforms, and \cite{MaTr20} for a thorough survey that provides proper context for use of these transforms and other tools from randomized numerical linear algebra.}

{Additionally, we will utilize left and right invariance properties of the Haar measure on locally compact Hausdorff topological groups, first established by Weil \cite{We40}. For a compact group $G$, this measure can be normalized to yield a probability measure $\Haar(G)$, which inherits the invariance and regularity properties of the original measure  and yields a means to uniformly sample from compact groups, such as $\O(n)$ and {$\SO(n)$}. Recall every nonsingular matrix $A \in \mathbb F^{n\times n}$ has a $QR$ factorization, with $A = QR$ for $R$ upper triangular with positive diagonal entries and $Q \in \O(n)$ if $\mathbb F = \mathbb R$ or $Q \in \U(n)$ if $\mathbb F = \mathbb C$. Stewart provided an outline to sample from $\Haar(\O(n))$ by using $\operatorname{Gin}(n,n)$ through the $QR$ factorization: if $A \sim \operatorname{Gin}(n,n)$ and $A = QR$ is the $QR$ decomposition of $A$ where $R$ has positive diagonal entries, then $Q \sim \Haar(\O(n))$ \cite{stewart}. Similarly, $\operatorname{Haar}(\U(n))$ can be sampled using $\operatorname{Gin}_{\mathbb C}(n,n)$. Our experiments will employ efficient sampling methods for $\Haar(\O(n))$ that use Gaussian Householder reflectors, in line with the $QR$ factorization of $\operatorname{Gin}(n,n)$ (see \cite{Mezz} for an outline of this method). }

\section{Gaussian elimination and growth factors}
\label{subsec:ge}
GENP iteratively works through the bottom right untriangularized $n-k+1$ dimensional submatrices of the GE transformed matrix $A^{(k)}$ to result in the factorization $A = LU$ for $L$ a unipotent lower triangular matrix and $U$ an upper triangular matrix. $A^{(k)}$ represents the resulting transformed matrix of $A$ at the $k^{th}$ GE step that is zero below the first $k-1$ diagonals and 
\begin{equation}
\label{eq:L}
L_{ij} = \frac{A^{(j)}_{ij}}{A^{(j)}_{jj}}
\end{equation}
for $i>j$, with $A^{(1)} = A$ and {$A^{(n)} = U$}. When GENP \textit{can} be completed (viz., when all leading principal minors are nonzero), the final factorization $A=LU$ can be reused with different input $\V b$ to solve the  computationally simpler triangular systems 
\begin{equation}
\label{eq:ge tri}
L\V y = \V b \quad \mbox{and}  \quad U\V x = \V y.
\end{equation}
Moreover, if $A$ has nonvanishing principal minors, then the resulting $LU$ factorization is unique. See standard references, such as \cite{Hi02}, for an explicit outline of GE.

If GENP cannot be completed, then a pivoting strategy can be applied so that GE can continue at each step, which can involve row or column movements that ensure the leading diagonal entry (i.e., the pivot) of the untriangularized subsystem is nonzero. Different pivoting strategies then result in the modified GE factorization $PAQ = LU$ for $P,Q$ permutation matrices. GEPP remains the most popular pivoting strategy, which uses only row permutations to ensure the leading pivot at the $k^{th}$ GE step is maximal in magnitude among the lower entries in its column. By construction, the $L$ from the resulting GEPP factorization $PA = LU$ satisfies $\|L\|_{\max}  =  1$.  If there is ever a ``tie'' during an intermediate GEPP pivot search, which occurs when $|A_{ij}^{j}| = |A_{jj}^{j}|$ and would result in $|L_{ij}| = 1$ for some $i > j$, then the $L$ and $U$ factors are not unique with respect to row transformed linear systems, i.e., if $A$ has the GEPP factorization $PA = LU$ and $B = QA$ for $Q$ a permutation matrix, then we do not necessarily have the GEPP factorization $(PQ^T) B = LU$. When ties are avoided, GEPP results in unique $L$ and $U$ factors.
\begin{theorem}[\cite{jpm}]
    \label{thm:GEPP_unique}
    Let $A$ be a nonsingular square matrix. Then the $L$ and $U$ factors in the GEPP factorization $PA=LU$ are invariant under row permutations on $A$ iff $|L_{ij}|<1$ for all $i>j$.
\end{theorem}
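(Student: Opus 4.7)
The plan is to prove the two implications separately, fixing throughout a deterministic tie-breaking rule for GEPP (e.g., the lowest current row index wins). For the ``if'' direction, assume $|L_{ij}|<1$ for all $i>j$ and fix an arbitrary $Q \in \mathcal{P}_n$. I would prove by induction on the GE step $k$ that GEPP applied to $QA$ selects, at step $k$, the same underlying row of $A$ as GEPP applied to $A$ does. The driving observation is that the GE update applied to any still-unselected row $r_\ell$ during the first $k-1$ steps depends only on $r_\ell$ itself and on the pivot history, i.e., the already-selected pivot rows together with their post-update forms; it does not depend on the positional ordering of the unselected rows. Consequently, at the beginning of step $k$, the set of candidate rows (and their column-$k$ entries) is identical in the two runs even if the candidates occupy different positions. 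The hypothesis $|L_{ik}|<1$ for $i>k$ says precisely that the maximum magnitude in column $k$ is strictly and uniquely attained, so both runs pick the same underlying row at step $k$, closing the induction. A matching pivot sequence then forces matching multipliers and matching pivot rows, giving $L'=L$ and $U'=U$.

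For the ``only if'' direction, I argue by contrapositive: suppose $|L_{i_0 j_0}|=1$ with $j_0$ chosen minimal among such column indices. By minimality, steps $1,\ldots,j_0-1$ have uniquely determined pivots, so the forward-direction induction applies through step $j_0-1$ to any $QA$. At step $j_0$ of GEPP on $A$, at least two underlying candidate rows attain the maximum column-$j_0$ magnitude; call the tie-breaking winner $r_\alpha$ (currently at position $j_0$) and a tied loser $r_\beta$ (currently at some position $i_0 > j_0$). I would take $Q$ to be the permutation that interchanges $r_\alpha$ and $r_\beta$ as rows of $A$. The claim then is that in GEPP on $QA$, the initial swap persists through step $j_0$: $r_\beta$ occupies position $j_0$ while $r_\alpha$ occupies position $i_0$. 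The tie at step $j_0$ is thus broken in favor of $r_\beta$ rather than $r_\alpha$, so the $j_0$th row of $U'$ equals the updated version of $r_\beta$ rather than of $r_\alpha$; since these are distinct, $U' \neq U$ and invariance fails.

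The main obstacle is making the persistence claim in the backward direction precise. Concretely, one must show that when $r_\alpha$ and $r_\beta$ are swapped in the initial matrix, the sequence of pivot swaps performed during steps $1,\ldots,j_0-1$ of GEPP on $QA$ leaves $r_\alpha$ and $r_\beta$ at the positions they would occupy in GEPP on $A$ but with their two positions interchanged. This reduces to a careful bookkeeping argument: at each step $k<j_0$ the selected pivot is the same underlying row of $A$, and since neither $r_\alpha$ nor $r_\beta$ is itself selected as a pivot through step $j_0-1$, the pivot swap at step $k$ acts on the pair $\{r_\alpha,r_\beta\}$ symmetrically and preserves the initial transposition of their positions. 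Once this bookkeeping lemma is in hand, both halves of the theorem follow cleanly from the characterization of the GEPP factorization as the one with every pivot strictly maximal.
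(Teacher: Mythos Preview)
The paper does not prove this theorem; it is quoted from \cite{jpm} in Appendix~B as background, so there is no in-paper argument to compare against. On its own merits your plan is correct and is the natural approach.

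Two places deserve one extra sentence each when you write it up. In the ``only if'' direction you assert that the step-$j_0$ updates of $r_\alpha$ and $r_\beta$ are distinct; this is precisely where nonsingularity of $A$ is used, since equality of the two updated rows would force $r_\alpha-r_\beta$ into the span of the first $j_0-1$ pivot rows, making $j_0+1$ distinct rows of $A$ linearly dependent. For the bookkeeping lemma, your sketch is right and can be made into a clean induction on $k$: since neither $r_\alpha$ nor $r_\beta$ is selected as a pivot before step $j_0$ (they end at positions $j_0$ and $i_0>j_0$), the pivot row $r_{\pi_k}$ at each step $k<j_0$ occupies the \emph{same} current position $q_k$ in both runs, so both runs apply the identical transposition $(k\ q_k)$, and one checks in two cases (according to whether the row currently at position $k$ is one of $r_\alpha,r_\beta$ or not) that the two arrangements continue to differ exactly by the swap of $r_\alpha$ and $r_\beta$.
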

Moreover, when no ties are encountered with a nonsingular $A$ with $B=QA$ defined as above, then GEPP does necessarily result in the factorization $(PQ^T)B = LU$.

Even when pivoting is not necessary, pivoting can remain desirable for its numerical stability properties when using floating-point arithmetic. Wilkinson first established the backward stability of GEPP by showing the {growth factor},
\begin{equation}
\rho(A) = \frac{\max_k \|A^{(k)}\|_{\max}}{\|A\|_{\max}},
\end{equation}
satisfies the upper exponential bound $1 \le \rho(A) \le 2^{n-1}$ for all matrices $A$ \cite{Wi61}. The growth factor controls the backwards relative error for computed solutions $\mathbf{\hat x}$ using GE, as Wilkinson further established through
\begin{equation}
\frac{\|\mathbf{\hat x} - \V x\|_\infty}{\|\V x\|_\infty} \le 4 n^2  \kappa_\infty(A) \rho(A) \epsilon_{\operatorname{machine}}
\end{equation}
for $\kappa_\infty(A) = \|A\|_\infty \|A^{-1}\|_\infty$  the $\ell_\infty$-condition number. \Cref{sec:num} will consider particular linear models that maximize the GEPP growth factors.

In practice, GEPP implementations result in computed solutions with higher accuracy far from the worst-case exponential behavior Wilkinson's analysis first highlighted. Understanding this behavior remains an important question in numerical analysis. This was partially answered by Huang and Tikhomirov through the use of average-case analysis of GEPP using $A \sim \operatorname{Gin}(n,n)$: they showed  with probability near 1, both the number of bits of precision needed to solve $A\V x = \V b$ to $m$ bits of accuracy is $m + \mathcal O(\log n)$ while also the computed and exact GEPP permutation matrix  factors align \cite{tikhomirov}.

\section{Stirling-1 distribution}
\label{sec:stirling}

This section will delve further into some properties of the Stirling-1 distribution, $\Upsilon_n$, with probability mass function given by \eqref{eq:stirling pmf}. Recall the Stirling numbers of the first kind, $s(n,k)$, arise as the coefficients using the generating function
\begin{equation}
\label{eq:gf stirl1}
(x)_n = \sum_{k=0}^n s(n,k) x^k
\end{equation}
for $(x)_n = x(x-1)\cdots (x-n+1)$, where $s(n,k) = 0$ if not $1\le k \le n$ except $s(0,0) = 1$.\footnote{The notation for Stirling numbers is inconsistent throughout the literature. We are adopting the convention used in \cite{comtet}.} The absolute Stirling numbers of the first kind, $|s(n,k)|$, can similarly be generated using \eqref{eq:gf stirl1} along with $|s(n,k)|=(-1)^{n+k}s(n,k)$; alternatively, $|s(n,k)|$ are determined by the generating function
\begin{equation}
\label{eq:gf abs stirl1}
\langle x\rangle_n = \sum_{k=0}^n |s(n,k)|x^k
\end{equation}
for $\langle x\rangle_n = x(x+1)\cdots (x+n-1)$. \eqref{eq:gf abs stirl1} further establishes the relation
\begin{equation}
\label{eq:stirl1 sym}
|s(n,k)| = s_{n-k}(1,2,\ldots,n-1)
\end{equation}
where 
\begin{equation}
s_j(a_1,a_2,\ldots,a_m) = \sum_{i_1 < \cdots < i_j} \prod_{\ell = 1}^j a_{i_k}
\end{equation}
denotes the elementary symmetric polynomials. This relationship can be used to establish the recurrence
\begin{equation}
\label{eq:stirl rec}
|s(n,k)| = |s(n-1,k-1)| + (n-1) |s(n-1,k)|
\end{equation}
for $k>0$.

Plugging $x=1$ into \eqref{eq:gf abs stirl1} can be used to establish the identity
\begin{equation}
\label{eq:n! id}
n! = \sum_{k=1}^n |s(n,k)|,
\end{equation}
which yields \eqref{eq:stirling pmf} denotes a valid probability density.
An alternative justification for \eqref{eq:n! id} follows from the standard interpretation that $|s(n,k)|$ counts the number of permutations $\sigma \in S_n$ that have exactly $k$ cycles in their disjoint cycle decomposition, where fixed points are counted as $1$-cycles\footnote{This correspondence is justified by noting $\begin{bmatrix} n \\ k \end{bmatrix}$, the number of permutations $\sigma \in S_n$ with $k$ disjoint cycles in their disjoint cycle decomposition, satisfies both the initial conditions along with the recurrence \eqref{eq:stirl rec} (a combinatorial argument yields the analogous recurrence by separating whether 1 comprises its own cycle, which aligns with $|s(n-1,k-1)|$, or if 1 is contained in a larger cycle, which aligns with $(n-1)|s(n-1,k)|$ since there are then $n-1$ places to insert 1 into an existing cycle).}. This interpretation using $S_n$ can be used to provide a combinatorial proof of \eqref{eq:n! id}:  the left hand side of \eqref{eq:n! id} is the number of elements of $S_n$, and the right hand side is the sum of each subset of permutations with a fixed number of $k$ cycles for $k$ ranging from 1 (viz., the $n$-cycles, of which there are $|s(n,1)| = (n-1)!$) to $n$ (viz.,  the identity permutation, in which each object comprises its own cycle of length 1, so that $|s(n,n)| = 1$).\footnote{Similarly, the Stirling numbers of the second kind, $S(n,k)$, can be defined as the number of partitions of $n$ objects into $k$ nonempty sets, which can be connected to $S_n$. $S(n,k)$ and $s(n,k)$ further are related as they yield the coordinates $n,k$ for lower triangular $n\times n$ matrices that are mutual inverses (cf. pg. 144 in \cite{comtet}).} 

Stirling numbers have appeared in connection with statistical problems dating back to their original formulation by Stirling in the 1730s (cf. \cite{ChSi88}). Probabilistic tools have been used to establish and analyze properties of Stirling numbers in the mid- to late-20th century \cite{bellavista,ChSi88,hajime}. $\Upsilon_n$ has appeared as a variant of a more general ensemble of Stirling distributions but has not been studied extensively in the literature. For instance, the mean and variance have been computed for $\Upsilon_n$ (cf. \cite{bellavista}), but general higher moment computations have not been touched. Applying successive derivatives in $x$ to \eqref{eq:gf abs stirl1} and then plugging in $x=1$\footnote{For example, using $$\frac{d}{dx} \langle x \rangle_n = \langle x \rangle_n \cdot \left[ \sum_{j=0}^{n-1} \frac1{x+j}\right] = \sum_{k=1}^n |s(n,k)| k x^{k-1}$$ and then plugging in $x = 1$ yields $n! H_n = \sum_{k=1}^n k |s(n,k)| = n! \mathbb E \Upsilon_n$.} yields
\begin{align}
\label{eq:mean var eta_n}
\mathbb E \Upsilon_n = H_n \quad \mbox{and} \quad
\operatorname{Var} \Upsilon_n = H_n - H_n^{(2)},
\end{align}
where 
\begin{equation}
H_n^{(m)} = \sum_{j=1}^n \frac1{j^m}
\end{equation}
are the generalized Harmonic numbers and $H_n = H_n^{(1)}$ the standard Harmonic numbers. Well known asymptotic results as $n \to \infty$ using Harmonic numbers include $H_n - \log n \to \gamma \approx 0.5772156649$, the Euler–Mascheroni constant, and $H_n^{(m)} \to \zeta(m)$ for $m>1$, where
\begin{equation}
\zeta(m) = \sum_{n\ge 1} \frac1{n^m}
\end{equation}
is the Riemann-zeta function, with $\zeta(2) = \frac{\pi^2}6$.

Continuing with higher moment computations using \eqref{eq:gf abs stirl1} then yields $\E \Upsilon_n^m$ is a function of $H_n^{(j)}$ for $j=1,2,\ldots,m$. Moreover, after computing also the third and fourth moments of $\Upsilon_n$, gathering all resulting terms that use \textit{only} $H_n = H_n^{(1)}$ results in a Bell polynomial of order $m$ with inputs $H_n$ for each component, i.e., 
\begin{equation}
\label{eq:stirl1 moments}
\mathbb E \Upsilon_n \hspace{-.5pc} \mod (H_n^{(2)},H_n^{(3)},\ldots,H_n^{(n)}) = B_n(H_n,\ldots,H_n). 
\end{equation}
This aligns also with the above formulas for $\mathbb E \Upsilon_n = H_n = B_1(H_n)$ and $\mathbb E \Upsilon_n^2 = \operatorname{Var}(\Upsilon_n) +(\mathbb E \Upsilon_n)^2 = H_n^2+H_n - H_n^{(2)} = B_2(H_n,H_n) - H_n^{(2)}$. A future research area can explore finding closed forms for the higher moments of $\Upsilon_n$, including establishing \eqref{eq:stirl1 moments} for higher orders, as well as other general Stirling distributions. 

\section{Butterfly matrices}
\label{subsec:butterfly}

This section will define and introduce relevant background for butterfly matrices and random butterfly matrices, including Haar-butterfly matrices, $\B_s(N,\Sigma_S)$. See \cite{jpm,Tr19} for a fuller discussion of additional numerical and spectral properties of butterfly matrices and random butterfly matrices.

The butterfly matrices of order $N$ are an ensemble of special orthogonal matrices defined recursively as follows: let $\{1\}$ as the order 1 butterfly matrices; for each order $N>1$ butterfly matrix $B$, there exist order $N/2$ butterfly matrices $A_1,A_2$ and order $N/2$ symmetric matrices $C,S$ such that $CS = SC$ and $C^2 + S^2 = \V I_{N/2}$ where
\begin{equation}
B = \begin{bmatrix} C & S\\-S & C \end{bmatrix} \begin{bmatrix} A_1 & \V0\\\V0 & A_2 \end{bmatrix} = \begin{bmatrix} CA_1 & SA_2\\-SA_1 & CA_2 \end{bmatrix}.
\end{equation}
The simple butterfly matrices are formed such that $A_1 = A_2$ at each recursive step. The scalar butterfly matrices, $\B(N)$, are formed using $(C,S) = (\cos \theta \V I, \sin \theta \V I)$ for some angle $\theta$ at each recursive step, where $\B_s(N)$ then denotes the simple scalar butterfly matrices. Note then each $B \in \B(N)$ is of the form 
\begin{equation}
B = (B(\theta) \otimes {\V I_{N/2}}) (A_1 \oplus A_2)
\end{equation}
for
\begin{equation}
B(\theta) = \begin{bmatrix} \cos\theta & \sin\theta\\-\sin\theta & \cos\theta\end{bmatrix}
\end{equation}
the (counter-clockwise) rotation matrix, while each $B \in \B_s(N)$ can then be written of the form
\begin{equation}
B = B(\boldsymbol  \theta) =  \bigotimes_{j=1}^n B(\theta_{n-j+1})
\end{equation}
for $\boldsymbol \theta \in [0,2\pi)^n$. Note $\B_s(N) \subset \B(N) \subset \SO(N)$, with equality when $N \le 2$. While $\B(N)$ is not multiplicatively closed, $\B_s(N)$ forms a closed subgroup of $\SO(N)$ with
\begin{equation}
B(\boldsymbol \theta)B(\boldsymbol \psi) = B(\boldsymbol \theta + \boldsymbol \psi) \quad \mbox{and} \quad B(\boldsymbol \theta)^{-1} = B(-\boldsymbol \theta)
\end{equation}
for $B(\boldsymbol \theta),B(\boldsymbol \psi) \in \B_s(N)$.

Let $\Sigma$ be a collection of dimension $2^k$ pairs $(C_k,S_k)$ of random symmetric matrices with $C_kS_k = S_kC_k$ and $C_k^2+S_k^2 = \V I_{2^{k}}$ for $k\ge 1$. We will write $\B(N,\Sigma)$ and $\B_s(N,\Sigma)$ to denote the ensembles of random butterfly matrices and random simple butterfly matrices formed by  independently sampling $(C,S)$ from $\Sigma$ at each recursive step. Let
\begin{align}
    \Sigma_S &= \{(\cos\theta^{(k)}\V I_{2^{k-1}} ,\sin\theta^{(k)}\V I_{2^{k-1}} ): \theta^{(k)} \mbox{ iid }  \operatorname{Uniform}([0,2\pi), k\ge 1\}
\quad \mbox{and}\\
    \Sigma_D &= \{ \bigoplus_{j=1}^{2^{k-1}}(\cos\t_j^{(k)},\sin\t_j^{(k)}): \theta^{(k)}_j \mbox{ iid }  \operatorname{Uniform}([0,2\pi),  k\ge 1\}.
\end{align}
A large focus for the remainder of this paper is on the {Haar-butterfly matrices}, $\B_s(N,\Sigma_S)$, while numerical experiments in \Cref{sec:num} will also use the other random scalar butterfly ensemble, $\B(N,\Sigma_S)$, along with the random diagonal butterfly ensembles, $\B(N,\Sigma_D)$ and $\B_s(N,\Sigma_D)$. Since $\B_s(N)$ is a compact abelian group, it has a Haar measure that enables uniform sampling of its elements. The name of Haar-butterfly matrices for $\B_s(N,\Sigma_S)$ is precisely because this construction aligns exactly with this Haar measure on $\B_s(N)$.
\begin{proposition}[\cite{Tr19}]
\label{prop:haar butterfly}
$B_s(N,\Sigma_S) \sim \Haar(\B_s(N))$
\end{proposition}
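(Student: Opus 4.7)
The plan is to exploit the uniqueness of the Haar probability measure on a compact group. Since $\B_s(N)$ is stated to be a closed (hence compact) abelian subgroup of $\SO(N)$, its Haar probability measure $\Haar(\B_s(N))$ is characterized as the unique Borel probability measure on $\B_s(N)$ that is invariant under the group action. My strategy is therefore to show that the distribution induced on $\B_s(N)$ by the construction in $\B_s(N,\Sigma_S)$ satisfies this invariance, and then conclude it coincides with $\Haar(\B_s(N))$ by uniqueness.

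First I would set up the parameterization map $\varphi: [0,2\pi)^n \to \B_s(N)$ given by $\varphi(\boldsymbol\theta) = B(\boldsymbol\theta) = \bigotimes_{j=1}^n B(\theta_{n-j+1})$. Identifying $[0,2\pi)^n$ with the torus $\mathbb T^n$ (viewed as $(\mathbb R/2\pi\mathbb Z)^n$), the identity $B(\boldsymbol\theta)B(\boldsymbol\psi) = B(\boldsymbol\theta+\boldsymbol\psi)$ stated in the excerpt shows that $\varphi$ is a continuous group homomorphism from $\mathbb T^n$ onto $\B_s(N)$. The construction of $B \sim \B_s(N,\Sigma_S)$ draws $\t^{(j)}$ iid $\Uniform([0,2\pi))$ and forms $B(\boldsymbol\t)$, so the distribution of $B$ is precisely the pushforward $\varphi_* \lambda$, where $\lambda$ is the uniform probability measure on $\mathbb T^n$, i.e., $\Haar(\mathbb T^n)$.

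Next I would verify invariance of $\varphi_* \lambda$ directly. Fix $B_0 = B(\boldsymbol\psi) \in \B_s(N)$. Left translation by $B_0$ sends $B(\boldsymbol\t)$ to $B_0 B(\boldsymbol\t) = B(\boldsymbol\psi + \boldsymbol\t)$ (addition taken mod $2\pi$ componentwise). Because each $\t^{(j)} \sim \Uniform([0,2\pi))$ is invariant under circular shifts and the $\t^{(j)}$ are independent, the random vector $\boldsymbol\psi + \boldsymbol\t$ has the same distribution as $\boldsymbol\t$. Consequently $B_0 B(\boldsymbol\t) \sim B(\boldsymbol\t)$, which shows $\varphi_*\lambda$ is left-invariant (and by commutativity also right-invariant).

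Finally, since $\varphi_*\lambda$ is a Borel probability measure on the compact group $\B_s(N)$ that is invariant under translation, uniqueness of the Haar probability measure gives $\varphi_*\lambda = \Haar(\B_s(N))$, which is exactly the claim. The main subtlety I anticipate is simply confirming that $\varphi$ is well defined as a continuous surjective group homomorphism (so that the pushforward argument applies); the non-injectivity of $\varphi$ arising from sign symmetries such as $(-\V I_2)\otimes(-\V I_2) = \V I_4$ is harmless, since the invariance argument only uses the homomorphism property together with uniformity and independence of the angles.
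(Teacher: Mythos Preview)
Your argument is correct. The paper does not actually supply a proof of this proposition; it is quoted as a result from \cite{Tr19}. Your route---realizing the sampling construction as the pushforward of $\Haar(\mathbb T^n)$ through the continuous surjective group homomorphism $\boldsymbol\theta \mapsto B(\boldsymbol\theta)$, checking translation invariance of the pushforward via the identity $B(\boldsymbol\psi)B(\boldsymbol\theta)=B(\boldsymbol\psi+\boldsymbol\theta)$ and the shift-invariance of uniform angles, and then invoking uniqueness of the Haar probability measure on the compact group $\B_s(N)$---is the standard and natural argument, and your handling of the non-injectivity (e.g., $B(\pi)\otimes B(\pi)=\V I_4$) is appropriate since only the homomorphism property is used.
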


Using the mixed-product property, matrix factorizations of each Kronecker component lead to a matrix factorization of Kronecker products. In particular, this holds for the $LU$ factorizations of $\B_s(N)$ using GENP and GEPP (see \Cref{prop: ge factors}). In particular, the permutation matrix factors from the GEPP factorization of $B \in \B_s(N)$ are from the butterfly permutation matrices, $\mathcal P_N^{(B)}$, which are defined recursively as $\mathcal P_2^{(B)} = \{\V I_2, P_{(1\ 2)}\}$ and
\begin{equation}
\label{eq:def butterfly perm}
\mathcal P_N^{(B)} = \left\{\bigotimes_{j=1}^n P_{(1 \ 2)}^{e_j}: e_j \in \{0,1\} \right\} = \mathcal P_2^{(B)} \otimes P_{N/2}^{(B)}
\end{equation}
for $N > 2$. 
The mixed-product property further yields $\mathcal P_N^{(B)}$ comprises a subgroup of permutation matrices that is isomorphic to $(\mathbb Z / 2\mathbb Z)^n$. Moreover, if $B \sim \B_s(N,\Sigma_S)$, then $P \sim \Haar(\mathcal P_N^{(B)})$ for $PB = LU$ the GEPP factorization of $B$ (see \Cref{cor: haar perm butterfly}).
\end{appendix}

\begin{acks}[Acknowledgments]
The author would like to thank a referee on a previous paper who had asked about the number of movements still needed after using a random transformation on a linear system, which led to the particular direction pursued here. Additionally, the author  thanks  Tom Trogdon, Nick Ercolani, and Adrien Peltzer for many helpful thoughts and insights during the project.
\end{acks}
\bibliographystyle{imsart-number} 
\bibliography{references}       


\end{document}